\title{Holomorphic Discs and Surgery Exact Triangles} 
\author{Bijan Sahamie} 
\address{Mathematisches Institut der LMU M\"{u}nchen, 
Theresienstrasse 39, 80333 M\"{u}nchen}
\email{sahamie@math.lmu.de}
\urladdr{http://www.math.lmu.de/~sahamie}
\theoremstyle{plain} 
\newtheorem{theorem}{Theorem}[section]   
\newtheorem{lem}[theorem]{Lemma}         
\newtheorem{prop}[theorem]{Proposition}
\newtheorem{cor}[theorem]{Corollary}
\newtheorem{q}{Question}
\theoremstyle{definition}
\newtheorem{definition}[theorem]{Definition}
\def\eqref#1{(\ref{#1})}
\numberwithin{equation}{section}
\begin{document}
%
\newcommand{\sone}{\mathbb{S}^1}
\newcommand{\lra}{\longrightarrow}
\newcommand{\lmt}{\longmapsto}
%

%
%
\newcommand{\ztwo}{\mathbb{Z}_2}
\newcommand{\RP}{\mbox{\rm RP}}
\newcommand{\N}{\mathbb{N}}
\newcommand{\spinc}{\mbox{\rm Spin}^c}
\newcommand{\spiny}{\mbox{\rm Spin}^c_3}
\newcommand{\homology}{\mathcal{H}}
\newcommand{\phitilde}{\widetilde{\phi}}
\newcommand{\phibar}{\overline{\phi}}
\newcommand{\grading}{\mbox{\rm gr}}
\newcommand{\riemop}{\partial_{\com_s}}
\newcommand{\banbund}{\mathcal{B}}
\newcommand{\chat}{\hat{c}}
\newcommand{\alphafat}{\boldsymbol\alpha}
\newcommand{\betafat}{\boldsymbol\beta}
\newcommand{\deltafat}{\boldsymbol\delta}
\newcommand{\betaprimefat}{\boldsymbol\beta'}
\newcommand{\mA}{\mathcal{A}}
\newcommand{\mJ}{\mathcal{J}}
\newcommand{\mM}{\mathcal{M}}
\newcommand{\mN}{\mathcal{N}}
\newcommand{\mH}{\mathcal{H}}
\newcommand{\shb}{\mbox{\rm \underline{h}}}
\newcommand{\fraks}{\mathfrak{s}}
\newcommand{\frakt}{\mathfrak{t}}

\def\co{\colon\thinspace}
\newcommand{\stwo}{\mathbb{S}^2}
\newcommand{\pd}{\text{PD}}
\newcommand{\sprung}{\\[0.3cm]}
\newcommand{\fund}{\pi_1}
\newcommand{\kerg}{\mbox{\rm Ker}_G\,}
\def\Ker#1{\mbox{\rm Ker}_{#1}\,}
\newcommand{\im}{\mbox{\rm im}\,}
\newcommand{\id}{\mbox{\rm id}}
\newcommand{\sothree}{\mathbb{SO}_3}
\newcommand{\sthree}{\mathbb{S}^{3}}
\newcommand{\disc}{\mbox{\rm D}}
\newcommand{\systwo}{C^{\infty}(Y,\stwo)}
\newcommand{\inner}{\text{int}}
\newcommand{\bk}{\backslash}

%
%
\newcommand{\contstand}{(\mathbb{R}^3,\xi_0)}
\newcommand{\cont}{(M,\xi)}
\newcommand{\sym}{\xi_{sym}}
\newcommand{\xistd}{\xi_{std}}
\newcommand{\cyl}{\mbox{\rm Cyl}_{r_0}^\mu}
\newcommand{\stap}{\mbox{\rm S}_+}
\newcommand{\stam}{\mbox{\rm S}_-}
\newcommand{\stapm}{\mbox{\rm S}_\pm}
\newcommand{\modulo}{\;\;\mbox{\rm mod}\,}

%
%
\newcommand{\crit}{\mbox{\rm Crit}}
\newcommand{\MC}{\mbox{\rm MC}}
\newcommand{\bmorse}{\partial^{\mbox{\rm \begin{tiny}M\!C\end{tiny}}}}
\newcommand{\M}{\mathcal{M}}
\newcommand{\stable}{\mbox{\rm W}^s}
\newcommand{\unstable}{\mbox{\rm W}^u}
\newcommand{\ind}{\mbox{\rm ind}}
\newcommand{\Mhat}{\widehat{\M}}
\newcommand{\modphi}{\mathcal{M}_\phi}
%
%
%

%
%
\newcommand{\com}{\mathcal{J}}
\newcommand{\moduli}{\mathcal{M}_{\mathcal{J}_s}(x,y)}
\newcommand{\modulit}{\mathcal{M}_{J_{s,t}}}
\newcommand{\modulittau}{\mathcal{M}_{\com_{s,t}(\tau)}}
\newcommand{\modulittauphi}{\mathcal{M}_{\com_{s,t}(\tau),\phi}}
\newcommand{\modulittaubig}{\mathcal{M}_{\com_{s,t}(\tau)}}
\newcommand{\modulittaubigphi}{\mathcal{M}_{\com_{s,t}(\tau),\phi}}
\newcommand{\modhat}{\widehat{\mathcal{M}}_{J_s}(x,y)}
\newcommand{\modhatxy}{\widehat{\mathcal{M}}(x,y)}
\newcommand{\modhatyw}{\widehat{\mathcal{M}}(y,w)}
\newcommand{\modhatone}{\widehat{\mathcal{M}}_{J_{s,1}}}
\newcommand{\modhatzero}{\widehat{\mathcal{M}}_{J_{s,0}}}
\newcommand{\modhatphi}{\widehat{\mathcal{M}}_{\phi}}
\newcommand{\modhatphib}{\widehat{\mathcal{M}}_{[\phi]}}
\newcommand{\moduliiso}{\mathcal{M}^{t}}
\newcommand{\modspace}{\mathcal{M}}
\newcommand{\modfamily}{\mathcal{M}_{\com_{s,t}}}
\newcommand{\modfamilyphi}{\mathcal{M}_{\com_{s,t},\phi}}
\newcommand{\modtriangle}{\mathcal{M}^{\Delta}}

\newcommand{\fglue}{f_{\mbox{\rm {\tiny glue}}}}
\newcommand{\phihat}{\widehat{\phi}}
\newcommand{\Phihat}{\widehat{\Phi}}
\newcommand{\Psihat}{\widehat{\Psi}}
\newcommand{\Phiinfty}{\Phi^\infty}
\newcommand{\Hhat}{\widehat{H}}
\newcommand{\Dhat}{\widehat{\D}}
\newcommand{\cops}{\partial_{J_s}}
\newcommand{\talpha}{\mathbb{T}_{\boldsymbol\alpha}}
\newcommand{\tbeta}{\mathbb{T}_{\boldsymbol\beta}}
\newcommand{\tgamma}{\mathbb{T}_{\boldsymbol\gamma}}
\def\marge#1{\marginpar{\scriptsize{#1}}}
\def\br#1{\begin{rotate}{90}#1\end{rotate}}
\newcommand{\hfhat}{\widehat{\mbox{\rm HF}}}
\newcommand{\sfh}{\mbox{\rm SFH}}
\newcommand{\sbottom}{\underline{s}}
\newcommand{\tbottom}{\underline{t}}
\newcommand{\cfhat}{\widehat{\mbox{\rm CF}}}
\newcommand{\cfinfty}{\mbox{\rm CF}^\infty}
\def\cfbb#1{\mbox{\rm CF}^+_{\leq #1}}
\def\cfbo#1{\mbox{\rm CF}^-_{\geq -#1}}
\newcommand{\cfleq}{\mbox{\rm CF}^{\leq 0}}
\newcommand{\cfcirc}{\mbox{\rm CF}^\circ}
\newcommand{\cfkinfty}{\mbox{\rm CFK}^{\infty}}
\newcommand{\cfkhat}{\widehat{\mbox{\rm CFK}}}
\newcommand{\cfkminus}{\mbox{\rm CFK}^{-}}
\newcommand{\cfkpstar}{\mbox{\rm CFK}^{+,*}}
\newcommand{\cfkostar}{\mbox{\rm CFK}^{0,*}}
\newcommand{\hfkcirc}{\mbox{\rm HFK}^\circ}
\newcommand{\hfkhat}{\widehat{\mbox{\rm HFK}}}
\newcommand{\hfkplus}{\mbox{\rm HFK}^+}
\newcommand{\hfkminus}{\mbox{\rm HFK}^-}
\newcommand{\hfkinfty}{\mbox{\rm HFK}^\infty}
\def\cfinftyfilt#1#2{\mbox{\rm CFK}^{#1,#2}}
\newcommand{\hfinfty}{\mbox{\rm HF}^\infty}
\newcommand{\hfinftwist}{\underline{\mbox{\rm {HF}}}^\infty}
\newcommand{\fhat}{\widehat{f}}
\newcommand{\fcirc}{f^\circ}
\newcommand{\Fhat}{\widehat{F}}
\newcommand{\Fcirc}{F^\circ}
\newcommand{\hattheta}{\widehat{\Theta}}
\newcommand{\shattheta}{\widehat{\theta}}
\newcommand{\cfminus}{\mbox{\rm CF}^-}
\newcommand{\hfminus}{\mbox{\rm HF}^-}
\newcommand{\cfplus}{\mbox{\rm CF}^+}
\newcommand{\hfplus}{\mbox{\rm HF}^+}
\newcommand{\hfcirc}{\mbox{\rm HF}^\circ}
\def\hfbb#1{\hfplus_{\leq #1}}
\def\hfbo#1{\hfminus_{\geq -#1}}
\newcommand{\Hs}{\mathcal{H}_s}
\newcommand{\gr}{\mbox{gr}}
\newcommand{\parinfty}{\partial^\infty}
\newcommand{\parhat}{\widehat{\partial}}
\newcommand{\parplus}{\partial^+}
\newcommand{\parminus}{\partial^-}
\newcommand{\symg}{\mbox{\rm Sym}^g(\Sigma)}
\newcommand{\symgg}{\mbox{\rm Sym}^{2g}(\Sigma)}
\newcommand{\symc}{\mbox{\rm Sym}^g(\mathbb{C})}
\newcommand{\pitwo}{\pi_2}
\newcommand{\pitwoham}{\pi_2^{t}}
\newcommand{\symcon}{\mbox{\rm Sym}^g(\Sigma_1\#\Sigma_2)}
\newcommand{\symgone}{\mbox{\rm Sym}^{g_1}(\Sigma_1)}
\newcommand{\symgtwo}{\mbox{\rm Sym}^{g_2}(\Sigma_2)}
\newcommand{\symgmo}{\mbox{\rm Sym}^{g-1}(\Sigma)}
\newcommand{\symggmo}{\mbox{\rm Sym}^{2g-1}(\Sigma)}
\newcommand{\dom}{\mathcal{D}}
\newcommand{\bigtrans}{\left.\bigcap\hspace{-0.27cm}\right|\hspace{0.1cm}}
\newcommand{\tlt}{\times\ldots\times}
\newcommand{\Isotopy}{\widehat{\Gamma}_{\Psi_t}}
\newcommand{\Isotopyinverse}{\widehat{\Gamma}_{\Psi_{1-t}}}
\newcommand{\orient}{\mathnormal{o}}
\newcommand{\ob}{\mathnormal{ob}}
\newcommand{\SL}{\mbox{\rm SL}}
\newcommand{\rhotilde}{\widetilde{\rho}}
\newcommand{\domstar}{\dom_*}
\newcommand{\domststar}{\dom_{**}}
\newcommand{\betaprime}{\beta'}
\newcommand{\afat}{{\boldsymbol\alpha}}
\newcommand{\bfat}{{\boldsymbol\beta}}
\newcommand{\gfat}{{\boldsymbol\gamma}}
\newcommand{\dfat}{{\boldsymbol\delta}}
\newcommand{\xfat}{{\boldsymbol x}}
\newcommand{\yfat}{{\boldsymbol y}}
\newcommand{\zfat}{{\boldsymbol z}}
\newcommand{\bfatprime}{{\boldsymbol\beta}'}
\newcommand{\dfattilde}{\widetilde{{\boldsymbol\delta}}}
\newcommand{\betapp}{\beta''}
\newcommand{\betatilde}{\widetilde{\beta}}
\newcommand{\bfattilde}{\widetilde{{\boldsymbol\beta}}}
\newcommand{\deltaprime}{\delta'}
\newcommand{\tbetaprime}{\mathbb{T}_{\boldsymbol\beta'}}
\newcommand{\talphaprime}{\mathbb{T}_{\boldsymbol\alpha'}}
\newcommand{\tdelta}{\mathbb{T}_{\boldsymbol\delta}}
\newcommand{\phidelta}{\phi^{\Delta}}
\newcommand{\domtilde}{\widetilde{\dom}}
\newcommand{\loss}{\widehat{\mathcal{L}}}
\newcommand{\bargamma}{\overline{\Gamma}}
\newcommand{\alphaprime}{\alpha'}
\newcommand{\ga}{\Gamma_{\alpha;\beta',\beta''}}
\newcommand{\gbone}{\Gamma_{\alpha;\beta,\widetilde{\beta}}^{w,1}}
\newcommand{\gbtwo}{\Gamma_{\alpha;\beta,\widetilde{\beta}}^{w,2}}
\newcommand{\gbthree}{\Gamma_{\alpha;\beta,\widetilde{\beta}}^{w,3}}
\newcommand{\gbfour}{\Gamma_{\alpha;\beta,\widetilde{\beta}}^{w,4}}
\newcommand{\gcone}{\Gamma_{\alpha;\delta,\delta'}^{w,1}}
\newcommand{\gctwo}{\Gamma_{\alpha;\delta,\delta'}^{w,2}}
\newcommand{\gcthree}{\Gamma_{\alpha;\delta,\delta'}^{w,3}}
\newcommand{\gcfour}{\Gamma_{\alpha;\delta,\delta'}^{w,4}}
\newcommand{\xpi}{x^+_i}
\newcommand{\xmi}{x^-_i}
\newcommand{\xp}{x^+}
\newcommand{\xm}{x^-}
\newcommand{\deltatilde}{\widetilde{\delta}}
\newcommand{\tbetatilde}{\mathbb{T}_{\widetilde{\boldsymbol\beta}}}

\newcommand{\eab}{\epsilon_{\alpha\beta}}
\newcommand{\ead}{\epsilon_{\alpha\delta}}
\newcommand{\hqhat}{\widehat{\mbox{\rm HQ}}}
\newcommand{\cupb}{\cup_\partial}
\newcommand{\oa}{\overline{a}}
\newcommand{\ab}{{\boldsymbol\alpha\boldsymbol\beta}}
\newcommand{\ad}{{\boldsymbol\alpha\boldsymbol\delta}}
\newcommand{\adb}{{\boldsymbol\alpha\boldsymbol\delta\boldsymbol\beta}}
\newcommand{\tila}{\widetilde{a}}
\newcommand{\tilb}{\widetilde{b}}
\def\pdehn#1#2{D_{#1}^{+,#2}} 
\def\ndehn#1#2{D_{#1}^{-,#2}}

%
%
\newcommand{\bund}{\mathcal{P}}
\newcommand{\diag}{\Delta^{\!\!E}}
\newcommand{\inter}{m_{\diag}}
\newcommand{\ozs}{Ozsv\'{a}th}
\newcommand{\sza}{Szab\'{o}}
\begin{abstract}
We show a connection between a surgery exact sequence
in knot Floer homology and the sequence derived in \cite{Saha01}.
As a consequence of this relationship we see that the exact sequence
in \cite{Saha01} also works with coherent orientations and admits
refinements with respect to $\spinc$-structures. As an application 
of this discussion, we prove that the ranks of the image and kernel 
of certain cobordism maps between knot Floer homologies can be 
computed combinatorially by relating them to a count of certain moduli
spaces of holomorphic disks.
\end{abstract}
\maketitle
\section{Introduction}\label{parone}
\noindent Heegaard Floer homology was introduced by Peter Ozsv\'ath and 
Zoltan Szab\'o in \cite{OsZa01} (see~\cite{Saha02} for a detailed 
introduction) and has turned out to be a useful tool in the study 
of low-dimensional topology. They also defined variants of this 
homology theory which are topological invariants of a pair $(Y,K)$ 
where $Y$ is a closed, oriented $3$-manifold and $K\subset Y$ a 
null-homologous knot (see \cite{OsZa04}). One of the main features of 
this homology theory is the existence of exact sequences which serve 
as a main tool in calculations. The exact sequences Ozsv\'{a}th and 
Szab\'{o} provided either just contained Heegaard Floer homology groups 
or they just contained knot Floer homology groups. 
\begin{theorem}[Theorem~8.2.~of~\cite{OsZa04}, 
cf.~Theorem~2.7.~of~\cite{OsSti}]\label{oszathm}
Let $Y$ be a closed, oriented $3$-manifold with framed knot $C\subset Y$ 
with framing $n$ and let $K\subset Y\backslash C$ be a null-homologous 
knot. Denote by $Y_n(C)$ the result of a surgery along $C$ in $Y$ with 
framing $n$ and denote by $Y_{n+1}(C)$ the result of a $(-1)$-surgery 
along a meridional curve of $C$ in $Y_n(C)$. Finally, we can interpret 
$Y$ itself as a result of a surgery along a framed knot in 
$Y_{n+1}(C)$. Then each surgery can be described by a Heegaard triple 
diagram which induces a map defined by counting holomorphic triangles. 
These fit into the following sequence
\begin{diagram}[size=2em,labelstyle=\scriptstyle] \dots&
\rTo^{\Fhat_3}&
\hfkhat(Y,K)& 
&
\rTo^{\Fhat_{1}}&
\hfkhat(Y_{n}(C),K)&
\rTo^{\Fhat_{2}}& 
&
\hfkhat(Y_{n+1}(C),K)&
\rTo^{\Fhat_3}& 
\dots \\
\end{diagram}
which is exact.
\end{theorem}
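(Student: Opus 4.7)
The plan is to adapt the proof strategy of the standard Ozsv\'ath--Szab\'o surgery exact triangle for $\hfhat$ (Theorem~9.12 of their original surgeries paper) to the knot setting, by carrying two basepoints $w,z$ straddling $K$ throughout the construction.

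The first step is to build a common Heegaard multi-diagram for the three pairs $(Y,K)$, $(Y_n(C),K)$, $(Y_{n+1}(C),K)$. Starting from a pointed Heegaard diagram $(\Sigma,\afat,\bfat,w,z)$ for $(Y,K)$ in which $C$ lies on $\Sigma$ with surface framing $n$ and is disjoint from the arc from $z$ to $w$ dual to $K$ (possible since $K\subset Y\setminus C$), I would replace one $\bfat$-curve $\beta_1$ (parallel to $C$) with curves $\gamma_1$ and $\delta_1$ obtained by positive Dehn twists giving framings $n$ and $n{+}1$. The key point is that $\beta_1,\gamma_1,\delta_1$ pairwise intersect in small neighbourhoods of $C$, all disjoint from $w$ and $z$, and that $(\Sigma,\afat,\gfat,w,z)$, $(\Sigma,\afat,\dfat,w,z)$ represent $(Y_n(C),K)$ and $(Y_{n+1}(C),K)$ as pointed knot diagrams. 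The maps $\Fhat_1,\Fhat_2,\Fhat_3$ are then defined in the usual way by counting pseudo-holomorphic triangles with Maslov index $0$ and $n_w=n_z=0$, using the top generators $\hattheta_{\bfat\gfat}, \hattheta_{\gfat\dfat}, \hattheta_{\dfat\bfat}$ of the associated $\#^k(S^1\times S^2)$ summands.

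The second step is the algebraic exact triangle lemma: it suffices to produce chain homotopies $\Hhat_1$ (and cyclic analogues) witnessing $\Fhat_{i+1}\circ\Fhat_i\simeq 0$, together with the stronger statement that $\Hhat_1+\Fhat_3\circ\Hhat_2+\Hhat_3\circ\Fhat_1$ is a quasi-isomorphism onto its image in an auxiliary mapping cone. Geometrically, $\Hhat_1$ is defined by counting pseudo-holomorphic rectangles with corners $\xfat,\hattheta_{\bfat\gfat},\hattheta_{\gfat\dfat},\yfat$ of Maslov index $-1$ and with $n_w=n_z=0$. The vanishing of $\Fhat_2\circ\Fhat_1$ up to $\Hhat_1$ follows from analyzing the boundary of the moduli space of index $0$ rectangles: besides the desired triangle-triangle degeneration, the other boundary component corresponds to a triangle factoring through a small holomorphic triangle in the $(\bfat,\gfat,\dfat)$ Heegaard triple, which is precisely the coefficient of the canonical intertwining triangle and must be evaluated to identify it with the identity map on $\cfhat$.

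The principal obstacle is exactly this last identification. One must choose the $\bfat,\gfat,\dfat$ curves as small Hamiltonian perturbations near $C$ so that the $(\Sigma,\bfat,\gfat,\dfat)$ Heegaard triple admits a unique holomorphic triangle of the right type, whose count equals $1$, so that the rectangle boundary analysis gives the wanted chain homotopy identity. This hinges on a careful area filtration argument combined with a model computation in a neighbourhood of $C$ (analogous to Section~4.2 of \cite{OsZa04}), where the holomorphic triangles of interest are supported on small bigons near the triple intersection of $\beta_1,\gamma_1,\delta_1$. Once this is verified, the insertion of the two basepoints $w,z$ poses no extra difficulty: since $C$ is disjoint from $K$, all small triangles and rectangles used in the argument are supported away from $w$ and $z$, so $n_w=n_z=0$ is automatic and the proof descends to the $\hfkhat$-level.
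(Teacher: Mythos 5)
Your proposal takes essentially the same route the paper does. Theorem~\ref{oszathm} is a quoted result, and the paper's only comment on its proof (end of \S\ref{cobmapintro}) is that the argument ``carries over verbatim'' from the $\hfhat$ surgery exact triangle to the doubly-pointed setting; the proof sketch of Lemma~\ref{observe} likewise points to the mapping-cone proof in \cite{OsZa05}, which is exactly the strategy you outline: a multi-diagram with $\beta_1,\gamma_1,\delta_1$ supported near $C$, null-homotopies from counts of index-$(-1)$ rectangles, a local model computation verifying the quasi-isomorphism hypothesis of the exact triangle detection lemma, and the observation that since $C$ is disjoint from $K$ the basepoints $w,z$ ride along for free so the whole argument descends to $\hfkhat$. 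One transcription error worth flagging: the map that must be a quasi-isomorphism in the detection lemma is $\Fhat_{i+2}\circ\Hhat_i+\Hhat_{i+1}\circ\Fhat_i$ as a map out of the $i$-th complex; the expression $\Hhat_1+\Fhat_3\circ\Hhat_2+\Hhat_3\circ\Fhat_1$ you wrote does not even compose (the summands have mismatched sources and targets), though this is clearly a slip of notation rather than a conceptual gap.
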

In \cite{Saha01} we introduced a new exact sequence which, in contrast
to the known sequences, contains both Heegaard Floer homology groups 
and knot Floer homology groups: We proved that the following
sequence is exact (cf.~Sequence~\eqref{dtses01})
\begin{equation}
\begin{diagram}[size=2em,labelstyle=\scriptstyle]
 \dots & \rTo^{f_*} &\hfkhat(Y,K)&&\rTo^{\Gamma_1}&&
 \hfhat(Y_{-1}(K))&&\rTo^{\Gamma_2}&&\hfkhat(Y_0(K),\mu)&
 \rTo^{f_*}&\dots
\end{diagram}\label{eq:aha}
\end{equation}
where $\mu$ is a meridian of $K$, interpreted as sitting in $Y_0(K)$.
The maps $\Gamma_1$ and $\Gamma_2$ are not defined by counting holomorphic
triangles and, in fact, the map $f_*$ is given by counting holomorphic disks
in a suitable Heegaard diagram. Comparing both sequences, 
i.e.~Sequence~\eqref{eq:aha} and the sequence given in 
Theorem~\ref{oszathm}, we see that in the Dehn twist sequence, $K$ 
serves as the knot and the surgery curve. This is a situation which in 
general violates the assumption in Theorem~\ref{oszathm} that the knot 
is null-homologous in the complement of the surgery curve. We think that 
it is natural to pose the following question.
\begin{q}\label{q:q} Is it possible to relate the Dehn twist sequence 
with the sequence given in Theorem~\ref{oszathm} or with one which 
is defined, similarly?
\end{q}
Providing a discussion and a possible answer to this question will be the 
main goal of this article.
To do that we will start proving the following statement.
\begin{prop}\label{firsttriangle} There is an exact sequence
\begin{equation}
\begin{diagram}[size=2em,labelstyle=\scriptstyle]
\dots&
\rTo^{\partial_*}&
\hfkhat(Y,K)& 
&
\rTo^{\Fhat_{1}}&
\hfhat(Y_{-1}(K))&
\rTo^{\Fhat_{2}}& 
&
\hfkhat(Y_0(K),\mu)&
\rTo^{\partial_*}& 
\dots \\
\end{diagram}\label{knotses02}
\end{equation}
where the $\Fhat_i$, $i=1,2$, are maps defined by counting holomorphic 
triangles in suitable doubly-pointed Heegaard triple diagrams, and 
$\partial_*$ is a connecting morphism.
\end{prop}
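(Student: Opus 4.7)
The plan is to realise Sequence~\eqref{knotses02} as arising from the doubly-pointed holomorphic-triangle construction applied to a Heegaard quadruple, following the scheme behind Theorem~\ref{oszathm} but with a careful placement of the second basepoint so that the middle Heegaard pair represents an unknotted knot, allowing one to identify its $\hfkhat$ with $\hfhat$ of the underlying three-manifold.

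I would start from a doubly-pointed Heegaard diagram $(\Sigma,\alphafat,\betafat,w,z)$ for $(Y,K)$ in which the last curve $\beta_g$ is a meridian of $K$ separating $w$ from $z$; fix a framing of $K$ and realise the corresponding longitude by a curve $\lambda\subset\Sigma$. Construct two further attaching sets $\gammafat$ and $\deltafat$, obtained from $\betafat$ by replacing $\beta_g$ with a curve isotopic to $\mu-\lambda$ (for $\gammafat$) and $\lambda$ (for $\deltafat$). Then $(\Sigma,\alphafat,\gammafat,w)$ is a Heegaard diagram for $Y_{-1}(K)$ and $(\Sigma,\alphafat,\deltafat,w,z)$ is a doubly-pointed diagram for $(Y_{0}(K),\mu)$. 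I would further arrange the position of $z$ so that, in addition to being separated from $w$ by $\beta_g$ and $\delta_g$, it lies in the same component of $\Sigma\setminus(\alphafat\cup\gammafat)$ as $w$; the doubly-pointed pair $(\Sigma,\alphafat,\gammafat,w,z)$ then represents $Y_{-1}(K)$ together with an unknot $U$ bounding an embedded disk, and a standard identification yields $\hfkhat(Y_{-1}(K),U)\cong\hfhat(Y_{-1}(K))$ for the middle vertex.

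With the quadruple $(\Sigma,\alphafat,\betafat,\gammafat,\deltafat,w,z)$ in hand I would define $\Fhat_1$ and $\Fhat_2$ at chain level by the usual doubly-pointed triangle count with $n_w=n_z=0$, pairing with the canonical top generators in the handlebody Floer groups of the $(\betafat,\gammafat)$- and $(\gammafat,\deltafat)$-diagrams, and take $\partial_*$ to be the connecting morphism of the resulting mapping cone. Exactness would then follow from the standard Ozsv\'ath--Szab\'o holomorphic-quadrilateral degeneration argument applied to the full quadruple, showing that $\Fhat_2\circ\Fhat_1$ and $\partial_*\circ\Fhat_2$ are null-homotopic. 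The main obstacle is the ``unknotting'' condition of the previous paragraph: securing the compatible positioning of $z$ with respect to all three Heegaard pairs is likely to require an additional handleslide or stabilisation, and one must then verify that this modification does not disturb the identifications at the other two vertices nor the holomorphic-quadrilateral argument that underlies exactness.
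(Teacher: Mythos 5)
Your proposal takes a genuinely different route from the paper's, and unfortunately the route you sketch has a gap at its central step.

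The paper does not invoke the Ozsv\'ath--Szab\'o holomorphic-polygon degeneration machinery at all. Instead, starting from a Heegaard diagram induced by an open book, it exploits Proposition~\ref{THMTHM}: the chain complex $\cfhat(\afat,\bfatprime,z)$ of the surgered manifold splits, on the nose, as a mapping cone of $f\co\cfkhat(\afat,\dfat,z,w)\to\cfkhat(\afat,\bfat,z,w)$, which gives the short exact sequence~\eqref{seqseq02} of chain complexes for free. The content of Lemmas~\ref{lemSES}, \ref{lemSES2} and \ref{lemSES3} is then a careful domain analysis showing that the two triangle maps $\Fhat_{\afat,\bfat\bfatprime}$ and $\Fhat_{\afat,\bfatprime\dfattilde}$ literally factor through the inclusion $\iota$ and the projection $\pi$ of that splitting. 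Consequently
\[
 0\to\cfkhat(\afat,\bfat,z,w)
 \xrightarrow{\;\Fhat_{\afat,\bfat\bfatprime}\;}
 \cfhat(\afat,\bfatprime,z)
 \xrightarrow{\;\Fhat_{\afat,\bfatprime\dfattilde}\;}
 \cfkhat(\afat,\dfattilde,z,w)\to0
\]
is a short exact sequence of chain complexes whose arrows \emph{are} the triangle maps, and the long exact homology sequence of this short exact sequence is precisely~\eqref{knotses02}. No rectangles, pentagons, or algebraic cone-detection lemma appear. This is considerably more elementary than the standard surgery-triangle proof, and it is also what makes $\partial_*$ an honest connecting homomorphism of a short exact sequence at chain level.

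Your argument instead tries to run the Ozsv\'ath--Szab\'o quadrilateral proof. The obstacle you flag (arranging the basepoint $z$ so that the $(\afat,\gfat,w,z)$-diagram encodes an unknot) is minor and manageable; the real problem is the sentence claiming that exactness ``would then follow from the standard Ozsv\'ath--Szab\'o holomorphic-quadrilateral degeneration argument\dots showing that $\Fhat_2\circ\Fhat_1$ and $\partial_*\circ\Fhat_2$ are null-homotopic.'' Null-homotopy of the compositions only makes the sequence a complex; it does not establish exactness. In the Ozsv\'ath--Szab\'o argument one must go much further: the square count produces a chain homotopy $H$ with $\parhat H+H\parhat=\Fhat_2\circ\Fhat_1$, one then assembles the map $(\Fhat_2,H)$ from the mapping cone of $\Fhat_1$ to the third complex, and one must prove this map is a \emph{quasi-isomorphism} --- which requires counting holomorphic pentagons and invoking the algebraic exact-triangle detection lemma. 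Your proposal omits this step entirely, and it is exactly the step that yields exactness. There is also an internal confusion about roles: if you identify $\operatorname{Cone}(\Fhat_1)$ with $\cfkhat(Y_0(K),\mu)$, the map whose kernel/image pattern you must chase is the projection $\operatorname{Cone}(\Fhat_1)\to\cfkhat(Y,K)[1]$, while the snake-lemma connecting morphism of the cone short exact sequence is $\Fhat_1$ itself, not $\partial_*$; so proving ``$\partial_*\circ\Fhat_2$ null-homotopic'' is not what is needed --- once the quasi-isomorphism is in place that vanishing is automatic. As written, the proposal does not prove the proposition.
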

This result is set up using different techniques than 
Ozsv\'{a}th and Szab\'{o} utilized for the surgery exact sequence in 
knot Floer homology. It is based on bringing the attaching circles of the
underlying Heegaard triple diagrams into an opportune position 
(see~Figure~\ref{Fig:atcirc}) and then
providing a careful analysis of the underlying doubly-pointed Heegaard 
triple diagrams (see Lemma~\ref{lemSES} and Lemma~\ref{lemSES2}). Although 
not essential, it is opportune to work with Heegaard diagrams that are induced 
by open books. In this particular situation the analysis of the Heegaard 
diagrams and of the domains of Whitney triangles is easier. 
Comparing this sequence with Theorem~\ref{oszathm}, notice, that we do not 
impose any condition on $K$, 
i.e.~the knot may be homologically essential. Furthermore, notice, that the third
map in the sequence, $\partial_*$, is a connecting morphism which is not given
by counting holomorphic triangles. As a side-effect of our analysis we will 
see that the Sequences \eqref{knotses02} and \eqref{eq:aha} {\it interact}
in a commutative diagram.
\begin{theorem}\label{ThdiagCOM} Let $Y$ be a closed, oriented $3$-manifold
and $K\subset Y$ a knot. Denote by $Y_{-1}(K)$ (resp.~$Y_{0}(K)$) the 
result of performing a $(-1)$-surgery (resp.~$0$-surgery) along $K$. We denote by $\mu$
a meridian of $K$. Then, all triangles and boxes in the following diagram
commute.
\begin{equation}
\begin{diagram}[size=2em,labelstyle=\scriptstyle]
& & & & & & & & 
\hfkhat(Y_0(K),\mu) & \rTo^{f_*}
& \dots\\ 
& & & & & & 
\ruTo^{\Gamma_2} & 
& 
\dTo^{\Fhat_{4}}_{\cong}
& &\uTo \\ 
\dots&
\rTo^{\partial_*}&
\hfkhat(Y,K)& 
&
\rTo^{\Fhat_{1}}&
\hfhat(Y_{-1}(K))&
\rTo^{\Fhat_{2}}& 
&
\hfkhat(Y_0(K),\mu)&
\rTo^{\partial_*}& 
\dots \\
\uTo& & 
\dTo^{\Fhat_{5}}_{\cong}& 
& 
\ruTo^{\Gamma_1} &       
& & & & & \\       
\dots&                 
\rTo^{f_*}& 
\hfkhat(Y,K) &                 
& & & & & & & 
\end{diagram}\label{diagCOM}
\end{equation}
Here, the isomorphisms $\Fhat_4$ and $\Fhat_5$ are also defined by 
counting holomorphic triangles in suitable doubly-pointed Heegaard 
triple diagrams.
\end{theorem}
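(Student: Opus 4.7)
The plan is to realise every map in diagram \eqref{diagCOM} as the induced map of a holomorphic polygon count in a single master Heegaard multi-diagram, and then to read off all commutativities from associativity of the polygon maps.

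First, starting from the doubly-pointed Heegaard triple diagrams used in the proof of Proposition \ref{firsttriangle} (cf.\ Lemma \ref{lemSES} and Lemma \ref{lemSES2}), I would enlarge them by adjoining the tuples of attaching circles that appear in the construction of the Dehn-twist sequence \eqref{eq:aha} in \cite{Saha01}. The outcome is one Heegaard multi-diagram containing simultaneously all attaching tuples needed to present $\Fhat_1,\Fhat_2,\Gamma_1,\Gamma_2,f_*$, and $\partial_*$. In this common diagram I would define $\Fhat_4$ and $\Fhat_5$ as the triangle maps obtained by pairing the relevant tori with the canonical top generator $\hattheta$ supported on the interpolating attaching tuple. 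Because in each case the two tuples being interpolated describe the same pointed Heegaard splitting up to handleslides and isotopies supported in the complement of $\{w,z\}$, the standard change-of-diagram argument yields that $\Fhat_4$ and $\Fhat_5$ are isomorphisms.

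The commutativity of every triangle and every box in \eqref{diagCOM} is then extracted from the $A_\infty$-associativity of holomorphic polygon counts. For the relation $\Fhat_1=\Gamma_1\circ\Fhat_5$ one passes to a Heegaard quadruple diagram whose four attaching tuples realise both sides, and considers the count of holomorphic rectangles on the relevant top generators. The ends of the associated one-dimensional moduli space degenerate in two ways, yielding $\Fhat_1$ on one side (as a single triangle in the ``degenerate'' diagram) and $\Gamma_1\circ\Fhat_5$ on the other. The identity $\Fhat_2=\Fhat_4\circ\Gamma_2$ is handled verbatim. For the remaining boxes, which involve the connecting morphisms $\partial_*$ and $f_*$, one uses in addition that these are boundary maps of short exact sequences of chain complexes derived from the same doubly-pointed diagrams used to define the triangle maps, so that naturality of the long exact sequence converts the desired square commutativity into one more polygon-count identity.

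The main technical obstacle will be the combined bookkeeping of basepoints and homotopy classes of Whitney polygons: since both sequences live on doubly-pointed diagrams representing knots, every handleslide or isotopy used in the polygon arguments must be performed in the complement of $\{w,z\}$ so as not to disturb $\hfkhat$, and every domain entering a rectangle count must be verified to have the prescribed multiplicities at $w$ and $z$. This is a refinement of the domain analysis already carried out in Lemma \ref{lemSES} and Lemma \ref{lemSES2}, lifted from the level of triangles to that of rectangles, and it is what forces the careful choice of attaching circles indicated in Figure \ref{Fig:atcirc} to be respected throughout the argument.
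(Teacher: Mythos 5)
Your proposal takes a genuinely different route from the paper, and it has a gap that I do not see how to close along the lines you describe.

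The paper does \emph{not} prove the commutativity via an $A_\infty$-associativity argument. Instead, in Lemma~\ref{lemSES2} and Lemma~\ref{lemSES3} it establishes the chain-level factorizations
\[
  \Fhat_{\afat,\bfat\bfatprime}=\iota\circ\Fhat_{\afat,\bfat\bfattilde}
  \qquad\text{and}\qquad
  \Fhat_{\afat,\bfatprime\dfattilde}=\Fhat_{\afat,\dfat\dfattilde}\circ\pi
\]
by a direct region-by-region analysis of domains of Whitney triangles (cf.\ Figures~\ref{Fig:seq2-03}, \ref{Fig:proof-a}, \ref{Fig:proof-b}, \ref{Fig:seq2-03d}, \ref{Fig:proof2-ab1}), showing that the relevant moduli spaces of triangles literally coincide. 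Here $\iota$ and $\pi$ are the chain-level subcomplex inclusion and quotient projection coming from the mapping-cone structure of Proposition~\ref{THMTHM}. The paper then separately identifies $\iota$ with $\Gamma_1$ and $\pi$ with $\Gamma_2$; this requires an extra isotopy of $\betatilde_1$ together with an argument that the resulting perturbation $\com_{s,t}$ of the path of almost complex structures induces the identity map on the chain level.

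The gap in your argument is the following. Degenerating a holomorphic rectangle in a Heegaard quadruple diagram such as $(\Sigma,\afat,\bfat,\bfattilde,\bfatprime)$ yields a relation, up to chain homotopy, between $\Fhat_{\afat,\bfat\bfatprime}$ and the composite $\Fhat_{\afat,\bfattilde\bfatprime}\circ\Fhat_{\afat,\bfat\bfattilde}$ of two \emph{triangle} maps. But $\Gamma_1$ is not a triangle map; it is the chain-level inclusion of a subcomplex in the short exact sequence~\eqref{seqseq02}. The triangle map $\Fhat_{\afat,\bfattilde\bfatprime}$ corresponds to a $(-1)$-surgery cobordism (since $\bfatprime_1$ differs from $\betatilde_1$ by a Dehn twist), and there is no reason it equals $\iota$ or $\Gamma_1$. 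So the degeneration would give $\Fhat_1=\Fhat_{\afat,\bfattilde\bfatprime,*}\circ\Fhat_5$, which is not the asserted relation $\Fhat_1=\Gamma_1\circ\Fhat_5$. The same objection applies to $\Gamma_2$, $f_*$ and $\partial_*$: none of these is a polygon-count map, so none of them appears as a vertex degeneration of a rectangle. A further issue is that the boxes involving the connecting morphisms $\partial_*$ require the chain maps between the two short exact sequences to commute strictly at the chain level (only then is the connecting morphism natural); your rectangle-count argument would at best give commutativity up to chain homotopy, which is not enough for this step. The direct moduli-space identification in the paper produces genuine chain-level equalities and hence handles the connecting morphisms for free.

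One part of your proposal is sound and agrees with the paper: the maps $\Fhat_4$ and $\Fhat_5$ are indeed triangle maps associated to small-isotopy triple diagrams, and the standard change-of-diagram argument does show they are isomorphisms. But the heart of the proof, namely relating the triangle maps to $\Gamma_1,\Gamma_2,f_*,\partial_*$, needs the domain analysis of Lemmas~\ref{lemSES2}--\ref{lemSES3} rather than an associativity argument.
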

The horizontal sequence is, in fact, the Sequence~\eqref{knotses02} and the
diagonal sequence is the Dehn twist sequence. Thus, we may interpret this
theorem as a possible answer to Question \ref{q:q}. This result, indeed,
has some implications we would like to discuss. 
\vspace{0.3cm}\\
{\bf Implication I.} 
For a contact manifold $(Y,\xi)$ Ozsv\'{a}th and
Szab\'{o} introduced a contact invariant $\chat(\xi)$, called the
contact element, which is an element of 
$\hfhat(-Y,\fraks_\xi)$ where $\fraks_\xi$ is the $\spinc$-structure associated
to $\xi$ (see~\cite{OsZa02} and cf.~\cite{HKM}). Furthermore, Lisca, Ozsv\'{a}th, Stipsicz and Szab\'{o} introduced in \cite{LOSS} an invariant $\loss(L)$ of 
a Legendrian knot $L$ which sits in $\hfkhat(-Y,L;\fraks_\xi)$. The contact 
element has proved to be a powerful obstruction to overtwistedness (see~\cite{LiSti01,LiSti02,LiSti03}). Being 
interested in contact geometry, to us, besides Question~\ref{q:q}, it was 
interesting to know if the Dehn twist sequences or the maps involved in 
these sequences can be defined with coherent orientations (and, hence, with $\Z$-coefficients) and if there is a refined version (with respect to $\spinc$-structures). Our interest in this question originates 
from \cite[Theorem 6.1]{Saha01}.
\begin{theorem}[Theorem 6.1~of~\cite{Saha01}]\label{myold} Let $(Y,\xi)$ 
be a contact manifold and $L$ an oriented Legendrian knot. Let $W$ be 
the cobordism induced by $(+1)$-contact surgery along $L$ and denote 
by $(Y_L,\xi_L)$ the contact manifold we obtain from $(Y,\xi)$ by 
performing this surgery. Then, the cobordism $-W$ induces a map
\[
 \Gamma_{-W}
 \co
 \hfkhat(-Y,L)
 \lra
 \hfhat(-Y_{+1}(L))
\]
such that $\Gamma_{-W}(\loss(L))=\chat(\xi_L)$.
\end{theorem}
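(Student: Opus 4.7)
The plan is to exhibit, in a Heegaard triple diagram adapted simultaneously to $L$ and to the two contact structures $\xi$ and $\xi_L$, a canonical holomorphic Whitney triangle of Maslov index zero whose count realizes the identity $\Gamma_{-W}(\loss(L))=\chat(\xi_L)$, and then to show that no other triangle contributes.

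First, I would fix an open book decomposition $(P,\phi)$ of $Y$ that is compatible with $\xi$ and has $L$ lying on a single page. Following the construction of Honda--Kazez--Mati\'c, such an open book yields a Heegaard diagram $(\Sigma,\afat,\bfat)$ for $-Y$ in which a distinguished intersection point represents $\chat(\xi)$. The refinement by Lisca--Ozsv\'ath--Stipsicz--Szab\'o adds basepoints $z$ and $w$ on either side of $L$ on the page $P$, promoting this to a doubly-pointed diagram for $(-Y,L)$ in which a specific generator $\xfat_\xi$ represents $\loss(L)$; by construction, forgetting $z$ sends $\xfat_\xi$ to the contact-class generator for $\chat(\xi)$.

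Next, I would realize the $(+1)$-contact surgery at the level of open books: it replaces the monodromy $\phi$ by $\phi\circ D_L^{-}$, where $D_L^{-}$ is a left-handed Dehn twist along $L$. Translating this modification into the Heegaard picture produces a third family of attaching circles $\gfat$ obtained from $\bfat$ by handleslides supported in a tubular neighborhood of $L$. The triple diagram $(\Sigma,\afat,\bfat,\gfat,z,w)$ encodes the two-handle cobordism; its orientation-reversed version encodes $-W$ and, after the usual basepoint and $\spinc$ bookkeeping, computes $\Gamma_{-W}$ on the knot Floer complex. Applying the Honda--Kazez--Mati\'c recipe to the new open book $(P,\phi\circ D_L^{-})$ yields a distinguished generator $\xfat_{\xi_L}\in\talpha\cap\tgamma$ representing $\chat(\xi_L)$, while the $\bfat\gfat$-piece is the standard handleslide configuration carrying a canonical top generator $\Theta_{\bfat\gfat}$.

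In a neighborhood of $L$ the three sets of curves then sit in a standard model position, and I would locate in this neighborhood a small Whitney triangle $\Delta$ with vertices $(\xfat_\xi,\Theta_{\bfat\gfat},\xfat_{\xi_L})$ whose domain is embedded, everywhere non-negative, disjoint from both basepoints, and of Maslov index zero. The matrix coefficient $\langle \Gamma_{-W}(\loss(L)),\chat(\xi_L)\rangle$ then receives at least a $\pm 1$ contribution from $\Delta$. The hard part is the uniqueness step: one must rule out every other triangle class leaving $\xfat_\xi\otimes\Theta_{\bfat\gfat}$ and landing on any generator of $\hfhat(-Y_L)$. I would argue this by combining positivity of domains, the requirement that competing domains avoid $z$ and $w$, and the special geometry of the open-book Heegaard diagram, in essence an energy/area argument of the type used in the invariance proofs of $\chat$ and $\loss$, concluding that $\Delta$ is the only contributing class and therefore $\Gamma_{-W}(\loss(L))=\chat(\xi_L)$.
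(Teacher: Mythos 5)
Your proposal misidentifies the nature of the map $\Gamma_{-W}$, and this is a genuine gap that undermines the whole strategy. You treat $\Gamma_{-W}$ as a cobordism map defined by counting holomorphic Whitney triangles in a Heegaard triple diagram, and you build the entire argument around locating a canonical Maslov-index-zero triangle and proving it is the unique contributor. But as the paper emphasizes repeatedly (already in the introduction, and again in \S\ref{DTSEQ}), the maps $\Gamma_1$ and $\Gamma_2$ in the Dehn twist sequences are \emph{not} defined by counting holomorphic triangles: they are the chain-level inclusion and projection maps arising from the mapping-cone splitting of Proposition~\ref{THMTHM}, where the generating set $\talpha\cap\tbetaprime$ decomposes as $\talpha\cap\tbeta\sqcup\talpha\cap\tdelta$. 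The map $\Gamma_{-W}$ in Theorem~\ref{myold} is one of these chain-level inclusion/projection maps, and on generators it is literally the natural identification. No triangle counting is involved at all.

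Consequently, the paper's indicated proof (explained in the paragraph following the theorem) is much more direct: choose an open book adapted to $L$ and $\xi$, identify the distinguished generator $eh_{L}\in\cfkhat(-Y,L)$ whose class is $\loss(L)$ (following \cite{LOSS}) and the distinguished generator $eh_{\chat}\in\cfhat(-Y_{+1}(L))$ whose class is $\chat(\xi_L)$ (following \cite{HKM}), and observe that they correspond under the identification of generators that defines $\Gamma_{-W}$. The invariance of the resulting element then follows from invariance of $\chat$ and the invariance properties of $\Gamma_{-W}$ proved in \cite{Saha01}. What you propose would instead prove a naturality statement for the holomorphic-triangle cobordism map, which is a distinct (and in fact previously known, cf.~\cite[Theorem~2.3]{LiSti04}) result. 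Relating that cobordism map to $\Gamma_{-W}$ is precisely the new content of Theorem~\ref{ThdiagCOM} of this paper, so using it to reprove the older Theorem~\ref{myold} would be anachronistic. Your plan also leans on an unspecified ``energy/area'' uniqueness argument for triangles, which would require serious work to make rigorous, but the more basic issue is that you are proving a statement about the wrong map.
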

There exists a similar naturality property that connects the contact elements 
before and after the contact surgery (see~\cite[Theorem 2.3]{LiSti04}). In the calculations and applications given by
Lisca and Stipsicz (see~\cite{LiSti01,LiSti02,LiSti03,LiSti04}) this naturality 
property was one of the main calculational
tools. Additionally, in their work, the understanding of the map providing 
the naturality was a major ingredient. In light of this, it is natural to ask 
for refinements and for coherent orientations of $\Gamma_{-W}$.\vspace{0.3cm}\\
As a matter of fact, Theorem~\ref{ThdiagCOM} may be applied to
introduce coherent orientations and refinements into the Dehn twist sequence: 
The maps $\Fhat_i$ all admit refinements (see~\S\ref{cobmapintro}) and, as such, 
we are able to refine 
the Sequence~\eqref{knotses02}. So, we may apply the commutative diagram given
in Theorem~\ref{ThdiagCOM} to provide a refined version of the Dehn twist
sequence. The same strategy may be applied to bring coherent orientations
into the Dehn twist sequence. We outline this at the end of 
Section~\ref{sec:setadts}. We summarize this briefly with the following corollary.
\begin{cor}\label{bla} The Dehn twist Sequences~$(\ref{dtses01})$ can be 
defined with coherent orientations. Furthermore, these sequences refine 
with respect to $\spinc$-structures.\hfill$\square$
\end{cor}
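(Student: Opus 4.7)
The plan is to bootstrap structure from Sequence~\eqref{knotses02} onto the Dehn twist sequence via the commutativity in Theorem~\ref{ThdiagCOM}. The first step is to observe that every map $\Fhat_i$ appearing in Diagram~\eqref{diagCOM} is defined by counting holomorphic triangles in a doubly-pointed Heegaard triple diagram. Such triangle counts are standard in Heegaard Floer theory: the associated moduli spaces carry coherent orientations in the sense of Ozsv\'ath--Szab\'o, and a choice of $\spinc$-structure on the underlying 4-manifold splits the count. Consequently each $\Fhat_i$ is automatically defined over $\Z$ and decomposes as a direct sum over $\spinc$-structures $\frakt$ on the corresponding cobordism, and the connecting morphism $\partial_*$ in \eqref{knotses02} inherits a $\Z$-coefficient lift and a $\spinc$-refinement from the snake-lemma construction on the short exact sequence of chain complexes underlying Proposition~\ref{firsttriangle}. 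Hence Sequence~\eqref{knotses02} can itself be refined and oriented coherently; this is the content that we need to transport.

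Next I would exploit the commutativity of Diagram~\eqref{diagCOM}. Because $\Fhat_4$ and $\Fhat_5$ are themselves triangle-counting isomorphisms, they too split according to $\spinc$-structures and are defined with coherent orientations. The triangles in Diagram~\eqref{diagCOM} identify the Dehn-twist maps $\Gamma_1$ and $\Gamma_2$ with composites of the form $\Fhat_1\circ\Fhat_5^{-1}$ and $\Fhat_4^{-1}\circ\Fhat_2$ (up to the relevant identifications), while the box identifies the map $f_*$ in the Dehn twist sequence with the analogous composite built from $\partial_*$. Therefore each of $\Gamma_1$, $\Gamma_2$, and $f_*$ is the conjugate (by coherently oriented, $\spinc$-refined isomorphisms) of a map that is already coherently oriented and $\spinc$-refined. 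Defining the structures on the Dehn twist sequence by these conjugates therefore produces a refinement with respect to $\spinc$-structures and a lift to $\Z$-coefficients.

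Finally I need to verify that the transported structures make Sequence~\eqref{dtses01} exact over $\Z$ and in each $\spinc$-summand. This is immediate from naturality: commutativity in Diagram~\eqref{diagCOM} identifies the Dehn twist sequence, summand by summand, with Sequence~\eqref{knotses02}, whose exactness with coherent orientations and $\spinc$-refinements has already been established. In particular the composition of consecutive maps vanishes and the induced map on cycles modulo boundaries is an isomorphism.

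The main obstacle is bookkeeping rather than conceptual: one has to check that the $\spinc$-structures on the various four-manifolds underlying $\Fhat_1,\ldots,\Fhat_5$ match up consistently so that the triangles in \eqref{diagCOM} commute at the refined level, and that the orientation conventions chosen for the triangle maps and for the snake-lemma connecting morphism are compatible. As indicated in the text, this bookkeeping is carried out at the end of Section~\ref{sec:setadts}, and once it is in place the corollary follows.
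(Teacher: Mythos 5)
Your proposal is correct and follows the same route as the paper: the paper also refines each triangle map $\Fhat_i$ (the vertical isomorphisms $\Fhat_4,\Fhat_5$ being given by trivial cobordisms and hence $\spinc(Y)$- and $\spinc(Y_0(K))$-refinable), and then uses the commutativity of Diagram~\eqref{diagCOM} to restrict $\Gamma_1$, $\Gamma_2$ and $f_*$ to the corresponding summands, with exactness inherited from the refined horizontal Sequence~\eqref{knotses02} (citing~\cite[Theorem~14.3.2]{OzSti}) and the $\Z$-lift obtained analogously from compatible orientation systems via~\cite[Lemma~8.7]{OsZa01}. The one point worth stating a bit more carefully in your write-up is the indexing of the refinement: it is governed by $\spinc$-structures $\frakt$ on the knot complement $Y\backslash\nu K$, with each term of the triangle summing the $\spinc$-structures on $Y$, $Y_{-1}(K)$, $Y_0(K)$ extending $\frakt$, rather than being a naive summand-by-summand splitting by $\spinc$-structures on a single cobordism.
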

The element $\chat(\xi_L)$ is the element in homology, induced by a special
generator $eh_{\chat}$ of $\cfhat(-Y_+(L))$ one can specify (see~\cite{HKM}). Theorem~\ref{myold} is proved by identifying a generator $eh_{L}$ (in fact, the
element for which $[eh_L]=\loss(L)$) of $\cfkhat(-Y,L)$ which is mapped onto $eh_{\chat}$ under $\Gamma_{-W}$. The fact that the element 
$[eh_{\chat}]=\chat(\xi_L)$ is invariant under all choices made in its 
definition (see~\cite{HKM}) together with the invariance properties we 
proved in \cite[\S3]{Saha02} for $\Gamma_{-W}$ may be assembled to an alternative 
proof of the fact that $[eh_{L}]$ does not depend on the choices made in its 
definition. So, 
providing coherent orientations additionally gives us evidence 
that the invariant $\loss$ can also be defined in $\Z$-coefficients and for 
Legendrian knots which are homologically essential. 
Of course, what was done in \cite{LOSS} can be slightly altered to 
provide these {\it generalizations}, as well. Because of that and since we do not 
write down this alternative approach we do not state this as a result, 
here.\vspace{0.3cm}\\
{\bf Implication II.} We prove the following statement.
\begin{theorem}\label{kerimequal}
Given a closed, oriented $3$-manifold and a knot $L\subset Y$ with framing $n$, 
then denote by $K$ a push-off of $L$ which corresponds to the $(n+1$)-framing
of $L$. Let $Y'$ be the result of a $n$-surgery along $L$ and let $K'$ be the
knot $K$ represents in $Y'$. Furthermore, 
let
\[
 \Fhat
 \co
 \hfkhat(Y,K)
 \lra
 \hfkhat(Y',K')
\]
be the map defined by counting holomorphic triangles in a suitable Heegaard 
triple diagram associated to the surgery. Then, (as part of a Dehn twist 
sequence) there is a map
\[
 f_*
 \co
 \hfkhat(Y,K)
 \lra
 \hfkhat(Y',K')
\]
which is defined by counting holomorphic disks in a suitable Heegaard diagram,
such that $\ker(\Fhat)=\ker(f_*)$ and $\im(\Fhat)=\im(f_*)$.
\end{theorem}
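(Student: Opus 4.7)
I plan to derive the statement from a suitable generalization of the commutative diagram~\eqref{diagCOM} of Theorem~\ref{ThdiagCOM}, extended from the $(-1,0)$-framing setting to arbitrary framing $n$. The geometric input is the observation that the $(n+1)$-push-off $K$, once transported to $Y'=Y_n(L)$, sits on $\partial\nu(L^*)$ as the $(1,1)$-curve $\mu_{L^*}+\lambda_{L^*}$ (taking $\mu_{L^*}=n\mu_L+\lambda_L$ and $\lambda_{L^*}=\mu_L$), so that $K'$ is isotopic to the dual knot $L^*$ inside the glued-in solid torus, and hence $\hfkhat(Y',K')\cong\hfkhat(Y',L^*)$. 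This identifies $\Fhat$ with one of the triangle-counting maps in a surgery sequence of the type~\eqref{knotses02} for $L$ with framing $n$.

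The main work is to construct the analog of diagram~\eqref{diagCOM} in this general framing. Both an exact sequence of type~\eqref{knotses02}, in which $\Fhat$ appears as a triangle map, and a Dehn twist sequence, in which $f_*$ appears as a disk-counting map, should fit into a commutative diagram whose vertical isomorphisms $\Fhat_4,\Fhat_5$ are defined by counting holomorphic triangles in auxiliary Heegaard triples. The construction should follow the scheme of Theorem~\ref{ThdiagCOM}: start with an open-book-induced Heegaard diagram whose attaching curves are arranged in a configuration generalizing Figure~\ref{Fig:atcirc} to the $n$-framing, then carry out the analysis of the associated doubly-pointed Heegaard triple diagrams in the style of Lemmas~\ref{lemSES} and~\ref{lemSES2}.

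With the extended diagram in place, the commutativity of the relevant triangle gives an identity $\Fhat=\Gamma\circ \Fhat_5$ for a Dehn twist map $\Gamma$, with $\Fhat_5$ an isomorphism realizing a canonical identification on homology; and $\Gamma$ coincides (via the corresponding isomorphism $\Fhat_4$) with $f_*$. Reading off kernel and image from these identities yields $\ker(\Fhat)=\ker(f_*)$ and $\im(\Fhat)=\im(f_*)$, where the equalities are understood under the canonical identifications of the various Heegaard presentations of $\hfkhat(Y,K)$ and $\hfkhat(Y',K')$.

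The main obstacle is the second step: one has to verify that the careful Heegaard-diagram analysis behind Theorem~\ref{ThdiagCOM} extends from framings $(-1,0)$ to general $n$, and in particular that the attaching circles still admit a configuration in which the Whitney-triangle domains entering Lemmas~\ref{lemSES} and~\ref{lemSES2} retain the structure required to run the arguments. Once this is established, the equality of kernels and images is a purely formal diagram chase.
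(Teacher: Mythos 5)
Your overall strategy---pass through the commutative diagram of Theorem~\ref{ThdiagCOM} to transport statements about $f_*$ to statements about a triangle-count map---is close in spirit, but there is a genuine gap in the middle step, and it is exactly the step the paper handles by a device you do not mention.

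The commutative diagram~\eqref{diagCOM} relates the Dehn twist map $f_*$ to the \emph{connecting homomorphism} $\partial_*$ from the short exact sequence of Proposition~\ref{firsttriangle}. It does \emph{not} contain the triangle-count map $\Fhat$ you are after. The maps $\Fhat_1$, $\Fhat_2$ appearing in \eqref{diagCOM} are triangle maps for the auxiliary surgeries $Y\to Y_{-1}(K)\to Y_0(K)$, not for the $n$-surgery from $Y$ to $Y'$; and $\partial_*$ is a connecting morphism, not a triangle count. So the asserted identity $\Fhat=\Gamma\circ\Fhat_5$ is not produced by commutativity of any triangle in the (generalized) diagram. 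The paper bridges this gap by invoking a \emph{third} exact sequence, namely the knot Floer surgery triangle of Theorem~\ref{oszathm}, whose third map $\Fhat_3$ \emph{is} a triangle count (and agrees, after the composition law for cobordism maps, with the map $\Fhat$ of the statement). Because that sequence and the one in Proposition~\ref{firsttriangle} share the same maps $\Fhat_1$, $\Fhat_2$, exactness forces $\ker(\partial_*)=\ker(\Fhat_3)$ and $\im(\partial_*)=\im(\Fhat_3)$, even though $\partial_*$ and $\Fhat_3$ are different maps. Only after this identification can one propagate the equality of kernels and images across the commutative square to $f_*$. You would also need the observation of Lemma~\ref{observe} (that the null-homologous hypothesis in Theorem~\ref{oszathm} can be dropped), since in the present situation the surgery knot and the reference knot are parallel, violating the stated hypothesis.

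Your concern about extending the Heegaard-diagram analysis of Lemmas~\ref{lemSES} and~\ref{lemSES2} to arbitrary framing $n$ is misplaced: the paper never regeneralizes those lemmas. It instead uses the surgery-description argument at the start of Section~\ref{gendef} (handle slides, inverse slides, slam dunk, Figure~\ref{Fig:fdef}) to show that, starting from an open book of $Y'=Y_n(L)$ and applying the machinery of Section~\ref{sec:setadts} verbatim, the diagram $(\Sigma,\afat,\dfat,w,z)$ already represents $(Y,K)$ with the desired framing, so the connecting morphism $f_*$ lands between the correct knot Floer groups. Your geometric identification of $K'$ with the dual knot $L^*$ is compatible with this but does not substitute for it.
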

This result tells us that the rank of the image and the kernel of the
map $\Fhat$ can be computed using the map $f_*$. However, since $f_*$ appears
as part of a Dehn twist sequence, by its definition, it is part of a
Heegaard Floer differential (cf.~Proposition~\ref{THMTHM}). The Sarkar-Wang 
algorithm (see~\cite{sarwang}) presents a way to
combinatorially compute Heegaard Floer differentials. We will prove that
this algorithm may as well be applied in this particular situation. Thus, 
we get the following result.

\begin{prop}\label{combinmap} Given a closed, oriented $3$-manifold $Y$ and 
a framed knot $L\subset Y$ with framing $n$, then denote by $K$ a push-off of $L$ 
which corresponds to the $(n+1)$-framing of $L$. Let $Y'$ be the 
manifold obtained by performing a surgery along $L$ and let $K'$ be
the knot $K$ represents in $Y'$. In this situation we may define a map
\[
  \Fhat\co\hfkhat(Y,K)\lra\hfkhat(Y',K')
\]
by counting holomorphic triangles in a suitable Heegaard triple diagram
associated to the surgery. The rank of the kernel and image of $\Fhat$ can 
be computed combinatorially.
\end{prop}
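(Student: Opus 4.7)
The approach is a two-step reduction. Theorem~\ref{kerimequal} already yields the equalities $\ker(\widehat{F}) = \ker(f_*)$ and $\im(\widehat{F}) = \im(f_*)$, so it suffices to compute the kernel and image of $f_*$ combinatorially. By the construction of $f_*$ as part of the Dehn twist sequence (cf.~Proposition~\ref{THMTHM}), at the chain level $f_*$ is a distinguished component of the Heegaard Floer differential $\widehat{\partial}$ on a specific doubly-pointed Heegaard diagram $\mathcal{H}$ associated to the surgery: the second basepoint $z$ induces a splitting of $\widehat{CF}(\mathcal{H})$ that identifies $f_*$ with a mapping-cone type component of $\widehat{\partial}$ connecting the generator sets of $\widehat{CFK}(Y,K)$ and $\widehat{CFK}(Y',K')$.

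Given this, the task is reduced to computing a specific piece of $\widehat{\partial}$ combinatorially. The natural tool is the Sarkar--Wang algorithm \cite{sarwang}, which, starting from any pointed Heegaard diagram, produces via a finite sequence of isotopies and handleslides a \emph{nice} diagram in which every region disjoint from the basepoint is either a bigon or a rectangle; in such a diagram $\widehat{\partial}$ counts only empty embedded bigons and rectangles. I would verify that this procedure applies in the present doubly-pointed setting by treating the union of the two basepoint regions as the forbidden region and carrying out all Sarkar--Wang finger moves in its complement. Since those moves are local and can be made to avoid any prescribed disk, neither handleslides nor isotopies need to cross a basepoint, so the resulting nice diagram preserves the filtration that singles out $f_*$ as a component of $\widehat{\partial}$.

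Once a nice doubly-pointed diagram is in hand, enumerating the empty embedded bigons and rectangles that connect generators of $\widehat{CFK}(Y,K)$ to those of $\widehat{CFK}(Y',K')$, subject to the correct basepoint constraint, gives an explicit matrix for $f_*$, and its kernel and image ranks are then obtained by linear algebra over $\mathbb{Z}_2$ (or over $\mathbb{Z}$ after introducing coherent orientations as in Corollary~\ref{bla}). Combined with Theorem~\ref{kerimequal}, this computes $\ker(\widehat{F})$ and $\im(\widehat{F})$. The main obstacle is the technical check underpinning the middle paragraph: one must confirm that the Sarkar--Wang niceness procedure can be arranged to respect both basepoints and thereby preserve the chain-level identification of $f_*$ with the prescribed component of $\widehat{\partial}$. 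This is not a conceptual issue but requires careful bookkeeping of which regions the finger moves are permitted to enter, and of the effect of each handleslide on the splitting induced by the second basepoint.
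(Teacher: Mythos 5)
Your reduction to computing $\ker(f_*)$ and $\im(f_*)$ via Theorem~\ref{kerimequal} is correct, and you are right that the Sarkar--Wang algorithm is the right tool. However, there are two interlocking gaps in how you propose to apply it.

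First, the differential of which $f_*$ is a component is $\parhat_{\afat\bfatprime}$ on the \emph{singly-pointed} diagram $(\Sigma,\afat,\bfatprime,z)$, not on the doubly-pointed diagram you plan to make nice. The decomposition of Proposition~\ref{THMTHM} that isolates $f$ as an off-diagonal block rests on a very specific geometric configuration near the boundary of the page $P$: the point $z$ sits in a region abutting the entire $\alpha_1$-curve, $\delta$ meets $\beta_1$ exactly once, and $\betaprime_1=D_\delta^+(\beta_1)$ lies in the configuration of Figure~\ref{Fig:figtwo}. A generic sequence of Sarkar--Wang finger moves applied to $(\Sigma,\afat,\bfat,w,z)$, even if the fingers avoid $\dom_z\cup\dom_w$, will in general push the $\bfat$-curves through the torus summand carrying $K$ and $L$, destroying the $\delta$-adapted structure and with it the clean splitting of $\talpha\cap\tbetaprime$ and the mapping-cone description. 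You would then have a nice diagram but no identification of $f_*$ with a piece of its differential. The paper avoids this by invoking Plamenevskaya's open-book refinement of Sarkar--Wang: niceness is achieved purely by an isotopy $\varphi_t$ of the monodromy, so the diagram stays of open-book type throughout, the relative position of $\alpha_1$, $\beta_1$, $z$, $w$ near $\partial P$ is untouched, and $\delta$ is simply replaced by $\varphi_1(\delta)$, so the diagram remains $\delta$-adapted.

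Second, and independently, even after making $(\Sigma,\afat,\bfat)$ nice you must still verify that the twisted diagram $(\Sigma,\afat,\bfatprime)$ is nice, and your proposal does not address this at all. In the paper this is the content of the final geometric observation: because $\delta$ is parallel to $\beta_2$ outside the small region of Figure~\ref{Fig:figtwo}, passing from $\bfat$ to $\bfatprime$ only splits a rectangle off each affected domain and merges $\dom_z$ with $\dom_w$ into the new basepoint region; both operations preserve niceness. Without the open-book constraint of the first point you would have no control on how the twisting interacts with the niceness you produced, so this step cannot simply be asserted. These two points together constitute the missing half of the argument.
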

Here, again it is opportune to work with open books: Plamenevskaya showed
in \cite{plamenev} that Heegaard diagrams induced by open books can be made
nice using the Sarkar-Wang algorithm by just using deformations that are 
induced by isotopies of the monodromy. This leads to a simplification of 
our discussion (see~proof of Proposition~\ref{combinmap}).

\paragraph{Acknowledgements.} Parts of 
the research for this paper was done during a stay at Columbia 
University. The main idea of this paper was motivated during a 
conversation with Peter Ozsv\'{a}th. The author wants to thank 
Columbia University for its hospitality and Peter Ozsv\'{a}th for 
his support. The stay was supported by the German Academic Exchange 
Service (DAAD).

\section{Preliminaries}\label{preliminaries}
\subsection{Heegaard Floer homologies}\label{prelim:01:1}
The Heegaard Floer homology group $\hfhat(Y)$ of a
$3$-manifold $Y$ was introduced in \cite{OsZa01}. The 
definition was extended for the case where $Y$ is equipped 
with a knot $K\subset Y$ to the variant $\hfkhat(Y,K)$ 
in \cite{OsZa04} (cf.~\cite{Saha01}).\vspace{0.3cm}\\
A $3$-manifold $Y$ can be described by a Heegaard diagram, which 
is a triple $(\Sigma,\afat,\bfat)$, where $\Sigma$ is an oriented 
genus-$g$ surface and $\afat=\{\alpha_1,\dots,\alpha_g\}$, 
$\bfat=\{\beta_1,\dots,\beta_g\}$ are two sets of pairwise disjoint 
simple closed curves in $\Sigma$ called {\bf attaching circles}. 
Each set of curves $\afat$ and $\bfat$ is required to consist of 
linearly independent curves in $H_1(\Sigma,\Z)$. In the following 
we will talk about the curves in the set $\afat$ (resp.~$\bfat$) 
as {\bf $\afat$-curves} (resp.~{\bf $\bfat$-curves}). Without 
loss of generality we may assume that the $\afat$-curves 
and $\bfat$-curves intersect transversely. To a Heegaard diagram 
we may associate the triple
$(\symg,\talpha,\tbeta)$ consisting of the $g$-fold symmetric power of
$\Sigma$, 
\[
  \symg=\Sigma^{\times g}/S_g,
\] 
and the submanifolds $\talpha=\alpha_1\times\dots\times\alpha_g$
and $\tbeta=\beta_1\times\dots\times\beta_g$. We define 
$\cfhat(\Sigma,\afat,\bfat)$ as the free $\ztwo$-module 
generated by the set
$\talpha\cap\tbeta$. In the following
we will just write $\cfhat$. For two intersection
points $\xfat,\yfat\in\talpha\cap\tbeta$ define 
$\pitwo(\xfat,\yfat)$ to be the set of
homology classes of {\bf Whitney discs} 
$\phi\co\disc\lra\symg$ ($\disc\subset\C$) that 
{\bf connect $\xfat$ with $\yfat$}. The map $\phi$ is called {\bf Whitney} if 
$\phi(\disc\cap\{Re<0\})\subset\talpha$ and $\phi(\disc\cap\{Re>0\})\subset\tbeta$. 
We call $\disc\cap\{Re<0\}$ the {\bf $\afat$-boundary of $\phi$} and
$\disc\cap\{Re>0\}$ the {\bf $\bfat$-boundary of $\phi$}. Such 
a Whitney disc {\bf connects $\xfat$ with $\yfat$} if 
$\phi(i)=\xfat$ and $\phi(-i)=\yfat$. 
Note that $\pitwo(\xfat,\yfat)$ can be interpreted as the subgroup of elements in
$H_2(\symg,\talpha\cup\tbeta)$ represented by discs with appropriate 
boundary conditions. We endow 
$\symg$ with a symplectic structure~$\omega$. By choosing a path of almost complex 
structures $\mJ_s$ on $\symg$ suitably (cf.~\cite{OsZa01})
all moduli spaces of holomorphic Whitney discs are Gromov-compact manifolds.
Denote by $\modphi$ the set of holomorphic Whitney discs in the equivalence
class $\phi$, and $\mu(\phi)$ the formal dimension of $\modphi$. Denote by 
$\modhatphi=\modphi/\R$ the quotient under the translation action of 
$\R$ (cf.~\cite{OsZa01}). Define $H(x,y,k)$ to be the subset of classes in
$\pitwo(\xfat,\yfat)$ that admit moduli spaces of dimension $k$. Fix a point 
$z\in\Sigma\backslash(\alpha\cup\beta)$ and define the map 
\[
  n_z\co\pitwo(\xfat,\yfat)\lra\Z,\,\phi\lmt\#(\phi,\{z\}\times\symgmo).
\] 
A boundary operator $\parhat\co\cfhat\lra\cfhat$ is given by defining it
on the generators $\xfat$ of $\cfhat$ by
\[
  \parhat\xfat
  =
  \sum_{\yfat\in\talpha\cap\tbeta}
  \sum_{\phi\in H(\xfat,\yfat,1)}
  \!\!\!\!\#\modhatphi\cdot U^{n_z(\phi)}\yfat.
\]
These homology groups are topological invariants of the manifold $Y$. 
We would like to note that not all Heegaard diagrams are suitable
for defining Heegaard Floer homology; there is an additional
condition that has to be imposed called {\bf admissibility}. 
This is a technical condition in the compactification of the moduli spaces
of holomorphic Whitney discs. A detailed knowledge of this condition 
is not important in the remainder of the present article since all
constructions are done nicely so that there will never be a 
problem. We advise the interested reader to \cite{OsZa01} .

\subsection{Knot Floer homology}\label{knotfloerhomology}
Given a knot $K\subset Y$, we can specify a certain subclass of 
Heegaard diagrams.
\begin{definition} \label{knotdiagram} A Heegaard 
diagram $(\Sigma,\afat,\bfat)$ is said to
be {\bf adapted} to the knot $K$ if $K$ is isotopic to a knot lying
in $\Sigma$ and $K$ intersects $\beta_1$ once transversely and is
disjoint from the other $\bfat$-circles.
\end{definition}
Since $K$ intersects $\beta_1$ once and is disjoint from the other 
$\bfat$-curves we know that $K$
intersects the core disc of the $2$-handle represented by $\beta_1$ once
and is disjoint from the others (after possibly isotoping the knot $K$).
Every pair $(Y,K)$ admits a Heegaard diagram adapted to $K$.
Having fixed such a Heegaard diagram $(\Sigma,\afat,\bfat)$ we can encode 
the knot $K$ in a pair of points. After isotoping $K$ onto $\Sigma$, 
we fix a small interval $I$ in $K$ containing the intersection point 
$K\cap\beta_1$. This interval should be chosen small enough such 
that $I$ does not contain any other intersections of $K$ with other 
attaching curves. The boundary $\partial I$ of $I$ determines two 
points in $\Sigma$ that lie in the complement of the attaching circles, 
i.e.~$\partial I=z-w$, where the orientation of $I$ is given by the 
knot orientation. This leads to a doubly-pointed Heegaard diagram 
$(\Sigma,\afat,\bfat,w,z)$. Conversely, a doubly-pointed Heegaard 
diagram uniquely determines a topological knot class: Connect 
$w$ with $z$ in the complement of the attaching circles $\afat$ 
and $\bfat\backslash\beta_1$ with an arc $\delta$ that crosses 
$\beta_1$, once. Connect $z$ with $w$ in the complement of $\bfat$
using an arc $\gamma$. The union $\delta\cup\gamma$ is represents the 
knot class $K$ represents. The orientation on $K$ is given by 
orienting $\delta$ such that $\partial\delta=z-w$. \vspace{0.3cm}\\
The knot chain complex $\cfkhat(Y,K)$ is the free $\ztwo$-module 
generated by the intersections $\talpha\cap\tbeta$. 
The boundary operator $\parhat^w$, for $\xfat\in\talpha\cap\tbeta$, is 
defined by
\[
  \parhat^w(\xfat)
  =
  \sum_{\yfat\in\talpha\cap\tbeta}
  \sum_{\phi\in H(\xfat,\yfat,1)}
  \#\modhatphi\cdot \yfat,
\]
where $H(\xfat,\yfat,1)\subset\pitwo(\xfat,\yfat)$ are the homotopy classes
with $\mu=1$ and $n_z=n_w=0$. We denote by $\hfkhat(Y,K)$
the associated homology theory $H_*(\cfkhat(Y,K),\parhat^w)$.

\subsection{Maps Induced by Cobordisms}\label{cobmapintro}
Here, we briefly give the construction of cobordism maps between 
knot Floer homologies. We restrict ourselves to the case that the 
cobordism is given by a single $2$-handle attachment. We point 
the interested reader to \cite{Saha04}.\vspace{0.3cm}\\ 
Given a closed, oriented $3$-manifold $Y$ with knot $K\subset Y$ 
and a cobordism $W$ by performing a surgery along a second knot 
$L\subset Y$ which is disjoint from $K$. The surgered manifold 
will be denoted by $Y'$. These data determine a cobordism
\[
 W=[0,1]\times Y\cupb\mbox{\rm h}^{4,2}
\]
together with a canonical embedding 
$[0,1]\times\sone\hookrightarrow W$ such that
\begin{eqnarray*}
 \{0\}\times\sone\hookrightarrow K&\subset& Y\\
 \{1\}\times\sone\hookrightarrow K'&\subset& Y'.
\end{eqnarray*}
A cobordism $W$ together with such an embedding {\bf will be 
called a cobordism between $(Y,K)$ and $(Y',K')$}. Choose a 
doubly-pointed Heegaard diagram $(\Sigma,\afat,\bfat,w,z)$ which 
is adapted to $K$ and $L$ and whose pair of base points $(w,z)$ 
encode the knot $K$. Performing the surgery along $L$, by the 
constructions given by Ozsv\'{a}th and Szab\'{o}, determines a 
third set of attaching circles $\gfat=\{\gamma_1,\dots,\gamma_g\}$. 
As we know from work of Ozsv\'{a}th and Szab\'{o} the cobordism 
associated to the triple diagram $(\Sigma,\afat,\bfat,\gfat)$ is 
diffeomorphic to $W$ (see~\cite[Proposition~4.3]{OsZa03}). The information of the knot $K$ is encoded 
in the pair of base points $(w,z)$ we include into our triple diagram. 
The doubly-pointed Heegaard triple $(\Sigma,\afat,\bfat,\gfat,w,z)$ 
not only determines the cobordism $W$ but also determines the 
knots $K$ and $K'$: By definition, the Heegaard diagram 
$(\Sigma,\afat,\bfat,w,z)$ determines $(Y,K)$, the diagram $(\Sigma,\afat,\gfat,w,z)$ determines $(Y',K')$ and, observe, that the
doubly-pointed Heegaard diagram $(\Sigma,\bfat,\gfat,w,z)$ determines 
the pair
\[
  (\sthree\#^{g-1}(\stwo\times\sone),U)
\]
with $U$ the unknot. The associated knot Floer homology of the third diagram
is isomorphic to
\[
 \hfhat(\sthree\#^{g-1}(\stwo\times\sone))
\]
which admits a {\it top-dimensional} generator 
$\hattheta^+_{\bfat\gfat}$ (see~\cite[\S2.4]{OsZa03}). This generator is
uniquely represented by an intersection point $\zfat^+_{\bfat\gfat}$ in 
the diagram $(\Sigma,\bfat,\gfat)$. For intersections 
$\xfat\in\talpha\cap\tbeta$, $\yfat\in\talpha\cap\tgamma$ we define 
a Whitney triangle $\phi\co\Delta\lra\symg$ connecting $\xfat$ with $\yfat$ 
and $\zfat^+_{\bfat\gfat}$ as a continuous map from the unit disc 
in $\C$ into $\symg$ with boundary conditions like indicated in 
Figure~\ref{Fig:triangle}.
\begin{figure}[t!]
\labellist\small\hair 2pt
\pinlabel {$\yfat$} [tr] at 7 34
\pinlabel {$\xfat$} [B] at 126 235
\pinlabel {$\zfat^+$} [tl] at 247 27
\pinlabel {$\talpha$} [Br] at 71 136
\pinlabel {$\tbeta$} [Bl] at 191 136
\pinlabel {$\tgamma$} [t] at 126 26
\endlabellist
\centering
\includegraphics[width=4cm]{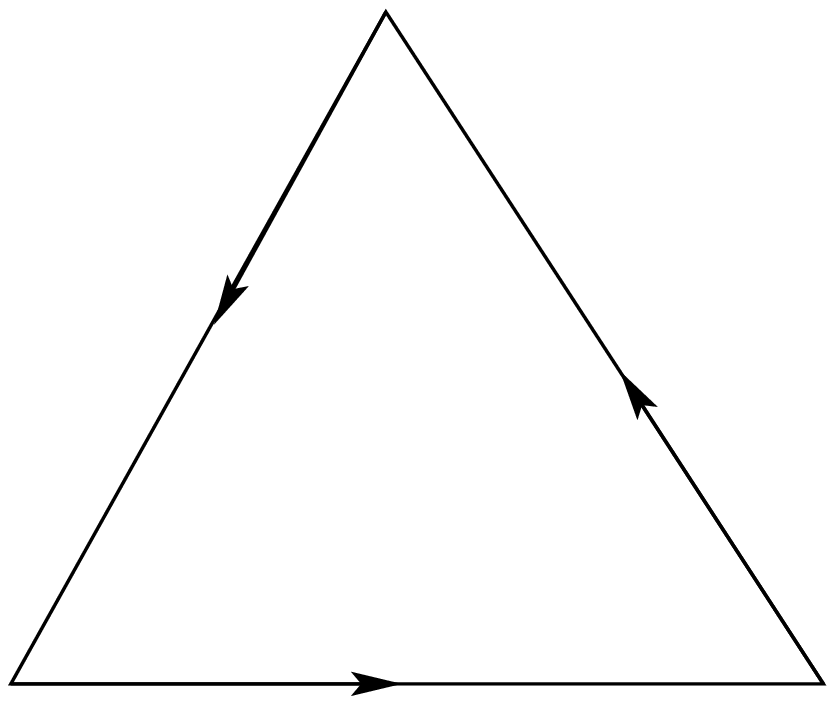}
\caption{A Whitney triangle and its boundary conditions.}
\label{Fig:triangle}
\end{figure}
We denote by 
$\mM^{\afat\bfat\gfat}(\xfat,\zfat^+_{\bfat\gfat},\yfat)$ the Maslov 
index-$1$ moduli space of holomorphic Whitney triangles connecting the 
indicated intersection points with $n_w=n_z=0$. We define
\[
 \Fhat_{\afat,\bfat\gfat}
 \co
 \cfkhat(\Sigma,\afat,\bfat,w,z)
 \lra
 \cfkhat(\Sigma,\afat,\gfat,w,z)
\]
by sending an intersection point $\xfat\in\talpha\cap\tbeta$ to
\[
 \Fhat_{\afat,\bfat\gfat}(\xfat)
 =
 \sum_{\yfat\in\talpha\cap\tgamma}
 \#\mM^{\afat\bfat\gfat}(\xfat,\zfat^+_{\bfat\gfat},\yfat)\cdot\yfat.
\]
This map descends to a map 
\[
 \Fhat_{L}
 \co
 \hfkhat(Y,K)
 \lra
 \hfkhat(Y',K')
\]
between the associated knot Floer homology
groups and just depends on the framed knot $L$ 
in an analogue way as this was proved for the maps coming 
from $2$-handle attachments
in Heegaard Floer homologies (see~\cite{Saha04}). The triple
diagram $(\Sigma,\afat,\bfat,\gfat)$ used to define the cobordism
map $\Fhat_{\afat,\bfat\gfat}$ also defines a cobordism 
$X_{\afat,\bfat,\gfat}$ (see~\cite[\S2.2]{OsZa03}) with boundary
components $-Y$, $Y'$ and $-\sthree\#^{g-1}(\stwo\times\sone)$ 
(see~\cite[\S2.2]{OsZa03}). Closing up the third boundary component
with $\#^{g-1}(B^3\times\sone)$, the cobordism is diffeomorphic
to the cobordism $W$ which is determined by the $2$-handle 
attachment along $L$ (see~\cite[Proposition~4.3]{OsZa03}). The same
way as it was done in \cite{OsZa03}, it is possible to define refinements
$\Fhat_{L;\fraks}$ of $\Fhat_L$ for $\fraks\in\spinc(W)$.\vspace{0.3cm}\\
Cobordism maps 
between knot Floer homologies
provide a surgery exact triangle like in the Heegaard Floer 
case (see \cite[Theorem 8.2]{OsZa04} and \cite[Theorem 2.7]{OsSti}). 
The proof from the Heegaard Floer case carries over verbatim to 
the knot Floer homology case.

\subsubsection{Open Books and Heegaard Diagrams}\label{sec:openbook}
We start by recalling some facts about open book decompositions of
$3$-manifolds. For details we point the reader to 
$\cite{Etnyre01}$.\vspace{0.3cm}\\
An {\bf open book} is a pair $(P,\phi)$ consisting of an oriented 
genus-$g$ surface $P$ with boundary and a homeomorphism $\phi\co P\lra P$ 
that is the identity near the boundary of $P$. The surface $P$ is called
{\bf page} and $\phi$ the {\bf monodromy}. Recall that
an open book $(P,\phi)$ gives rise to a $3$-manifold by the following 
construction: Let $c_1,\dots,c_k$ denote the boundary components of
$P$. Observe that
\begin{equation}
  (P\times[0,1])/(p,1)\sim(\phi(p),0) \label{ob:01}
\end{equation}
is a $3$-manifold with boundary given by the tori
\[
  \left((c_i\times[0,1])/(p,1)
  \sim
  (p,0)\right)
  \cong 
  c_i\times\sone.
\]
Fill in each of the holes with a full torus $\disc^2\times\sone$: we glue
a meridional disc $\disc^2\times\{\star\}$ onto 
$\{\star\}\times\sone\subset c_i\times\sone$. In this way we define 
a closed, oriented $3$-manifold $Y(P,\phi)$. Denote by $B$ the union 
of the cores of the tori $\disc^2\times\sone$. The set $B$
is called {\bf binding}. Observe that the definition of $Y(P,\phi)$ 
defines a fibration
\[
  P\hookrightarrow Y(P,\phi)\backslash B\lra\sone.
\]
Consequently, an open book gives rise to a Heegaard decomposition 
of $Y(P,\phi)$ and, thus, induces a Heegaard diagram of $Y(P,\phi)$. To see 
this we have to identify a splitting surface of $Y(P,\phi)$, i.e.~a surface 
$\Sigma$ that splits the manifold into two components. Observe that the boundary
of each fiber lies on the binding $B$. Thus
gluing together two fibers yields a closed surface $\Sigma$ of genus $2g$.
The surface $\Sigma$ obviously splits $Y(P,\phi)$ into two components and
can therefore be used to define a Heegaard 
decomposition of $Y(P,\phi)$ (cf.~\cite{HKM}).
\begin{figure}[t!]
\labellist\small\hair 2pt
\pinlabel {Page $P\!\times\!\{1/2\}$ of the open book} [bl] at 29 187
\pinlabel {$z$} [bl] at 189 112
\pinlabel {$a_i$} [t] at 76 22
\pinlabel {$b_i$} [t] at  153 22
\endlabellist
\centering
\includegraphics[height=3cm]{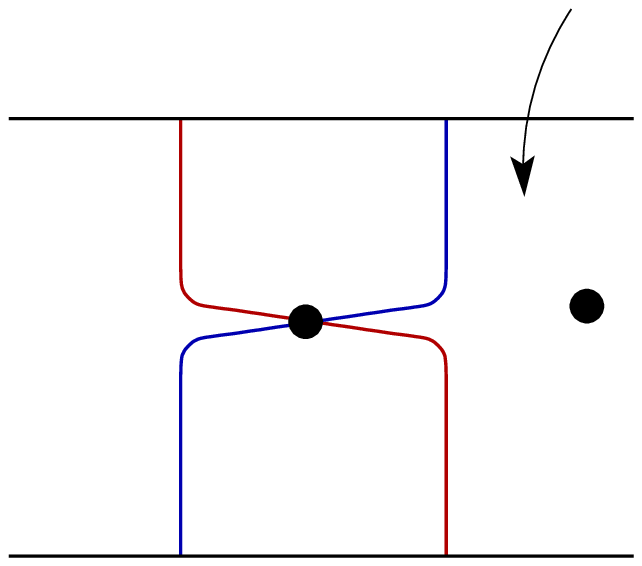}
\caption{Definition of $b_i$ and positioning of the point $z$.}
\label{Fig:zpointpos}
\end{figure}
Let $a=\{a_1,\dots,a_n\}$ be a {\bf cut system} of $P$, i.e.~a set of
disjoint properly embedded arcs such that $P\backslash\{a_1,\dots,a_n\}$
is a disc. One can easily show that being a cut system implies that
$n=2g$. Choose the splitting surface
\[
  \Sigma:=P\times\{1/2\}\cup_{\partial} (-P)\times\{1\}
\]
and let $\oa_i$ be the curve $a_i\subset P\times\{1/2\}$ with opposite 
orientation, interpreted as a curve in $(-P)\times\{0\}$.
Then define $\alpha_i:=a_i\cup\oa_i$. The curves $b_i$ are isotopic push-offs of the $a_i$. We
choose them like indicated in Figure~\ref{Fig:zpointpos}: We push the $b_i$
f the $a_i$ by following with $\partial b_i$ the positive boundary 
orientation of $\partial P$. Finally set $\beta_i:=b_i\cup\overline{\phi(b_i)}$.
The data $(\Sigma,\alpha,\beta)$ define a Heegaard diagram of $Y(P,\phi)$ 
(cf.~\cite{HKM}). 

\subsection{The Dehn Twist Sequence}\label{DTSEQ}
We will briefly recall the results given 
in \cite{Saha01}. Especially we will focus on the
derived surgery exact sequence we will call the {\bf Dehn twist sequence}.
Given an abstract open book $(P,\phi)$ and let $\delta\subset P$ be
a homologically essential simple closed curve. We try to determine how the
groups $\hfhat$-change if we compose the monodromy $\phi$ with a Dehn twist
along $\delta$, here we stick to positive Dehn twists. Let $(\Sigma,\afat,\bfat)$
be a Heegaard diagram induced by the pair $(P,\phi)$ such that $\delta$ intersects
$\beta_1$ once transversely and is disjoint from the other $\beta$-circles.
We denote by $\betaprime_1$ the curve $D_\delta^+(\beta_1)$. Note that the set of
attaching circles $\bfatprime$ which is given by
\[
  \{\betaprime_1,\beta_2,\dots,\beta_g\}
\]
and determines the manifold after the surgery. A third set of attaching circles
\[\dfat=\{\delta,\beta_2,\dots,\beta_g\}\] is formed. In the left portion of Figure~\ref{Fig:figtwo} we see 
a neighborhood of $\delta\cap\beta_1$ in the Heegaard surface (cf.~\cite{Saha01}).

\begin{figure}[t!]
\labellist\small\hair 2pt
\pinlabel {boundary of $P$} [b] at 235 328
\pinlabel {boundary of $P$} [b] at 667 328
\pinlabel {$z$} [r] at 268 250
\pinlabel {$\delta$} at 119 250
\pinlabel {$z$} [r] at 703 250
\pinlabel {$\dom_*$} [Br] at 119 212
\pinlabel {$\dom_*$} [Br] at 544 212
\pinlabel {$\dom_{**}$} [b] at 160 168
\pinlabel {$\dom_{**}$} [b] at 597 168
\pinlabel {$w$} [l] at 105 187
\pinlabel {$w$} [l] at 520 187
\pinlabel {$\dom_z$} [Bl] at 319 243
\pinlabel {$\dom_z$} [Bl] at 747 243
\pinlabel {$_2$} [l] at 348 122
\pinlabel {$_1$} [B] at 375 105
\pinlabel {$_2$} [l] at 780 122
\pinlabel {$_1$} [B] at 808 105
\pinlabel {$\beta_2$} [l] at 365 312
\pinlabel {$\alpha_2$} [l] at 365 276
\pinlabel {$\beta_1$} [l] at 365 204
\pinlabel {$\alpha_1$} [l] at 365 168
\pinlabel {$\alpha_2$} [l] at 365 60
\pinlabel {$\beta_2$} [l] at 365 25
\pinlabel {$\beta_2$} [l] at 796 312
\pinlabel {$\alpha_2$} [l] at 796 276
\pinlabel {$\betaprime_1$} [l] at 796 204
\pinlabel {$\alpha_1$} [l] at 796 168
\pinlabel {$\alpha_2$} [l] at 796 60
\pinlabel {$\beta_2$} [l] at 796 25
\endlabellist
\centering
\includegraphics[height=5cm]{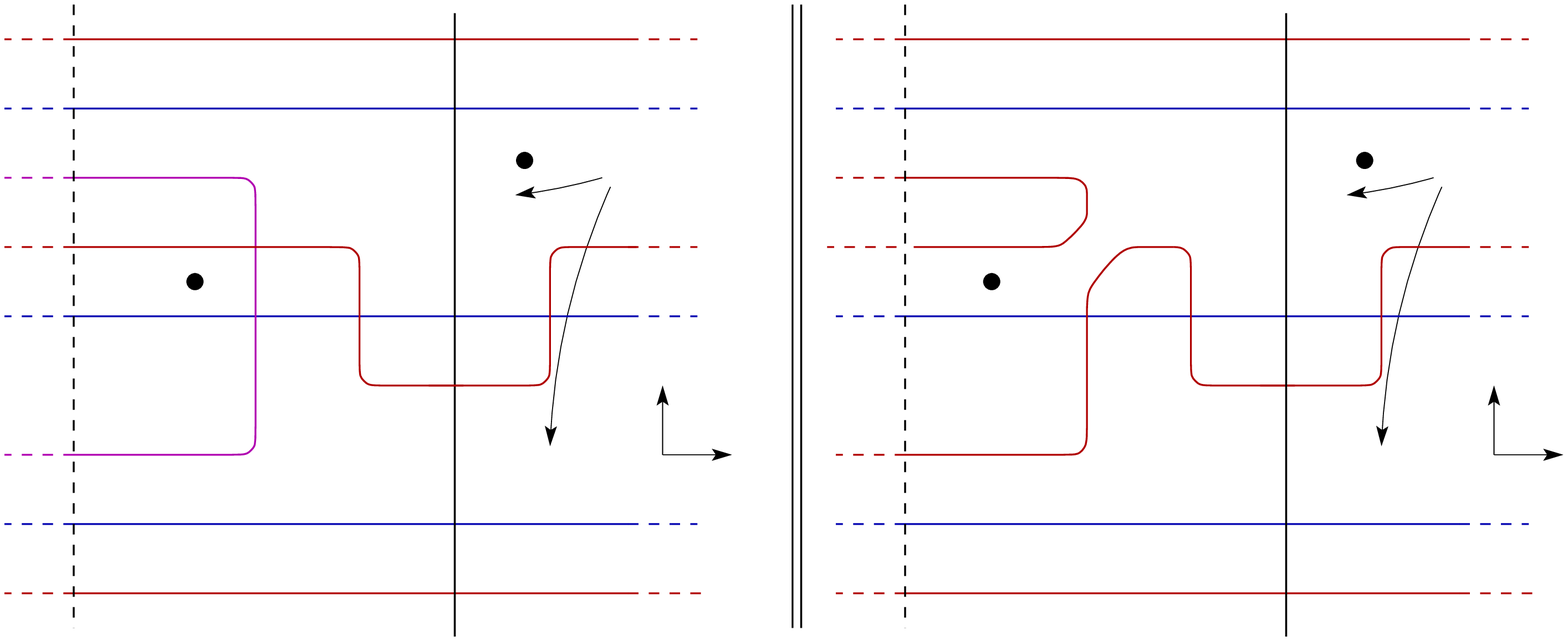}
\caption{Before and after the positive Dehn twist.}
\label{Fig:figtwo}
\end{figure}
Denote by $Y^\delta$ the manifold determined by the Heegaard diagram
$(\Sigma,\afat,\bfatprime)$. In \cite{Saha01} we have shown that in this
particular situation the homology groups $\hfhat(Y^\delta)$ can be 
interpreted as a mapping cone of the complexes $(\cfhat(\afat,\bfat),\parhat^w_{\afat,\bfat})$
and $(\cfhat(\afat,\dfat),\parhat^w_{\afat,\dfat})$ with the chain map 
$f$ given in following definition (cf.~Proposition~\ref{THMTHM}). 
\begin{definition}\label{mapdef} Define a map
\[
  f\co
  \cfkhat(\Sigma,\afat,\dfat,z,w)
  \lra
  \cfkhat(\Sigma,\afat,\bfat,z,w)
\]
by sending an element $x\in\talpha\cap\tdelta$ to
\[
  f(\xfat)
  =
  \sum_{z\in\talpha\cap\tbeta}
  \sum_{\phi\in H(\xfat,\yfat,1)}
  \modhatphi\cdot \yfat
\]
where $H(x,y,1)$ are classes in $\pitwo^{\afat\bfatprime}(x,y)$ with
$\mu=1$ and whose pair $(n_*(\phi),n_{**}(\phi))$ does not equal $(0,0)$. Here, 
$n_*(\phi)$ and $n_{**}(\phi)$ denote the multiplicities of $\phi$ at the
domains $\domstar$ and $\domststar$ (cf.~right portion of Figure~\ref{Fig:figtwo}).
\end{definition}
We would like to explain the main idea of the proof: The chain complex 
$\cfhat(\afat,\bfatprime)$ is generated by the intersection points 
$\talpha\cap\tbetaprime$. It is easy to observe that this generating set can be 
canonically identified with the disjoint union \[ 
\talpha\cap\tbeta\sqcup\talpha\cap\tdelta. \] We will call those intersections in 
$\talpha\cap\tbetaprime$ corresponding to the intersections $\talpha\cap\tbeta$ as 
{\bf $\afat\bfat$-intersections} and call the others {\bf 
$\afat\dfat$-intersections}. Due to the positioning of the point $z$ we observe 
that there is no holomorphic disc connecting an $\afat\bfat$-intersection with and 
$\afat\dfat$-intersection. Furthermore, we can identify moduli-spaces of 
holomorphic discs connecting an $\afat\bfat$-intersection with an 
$\afat\bfat$-intersection with the moduli-spaces of holomorphic discs appearing in 
the differential $\parhat^w_{\afat\bfat}$. Moreover, we can identify the 
moduli-spaces of holomorphic discs connecting an $\afat\dfat$-intersection with an 
$\afat\dfat$-intersection with the moduli-spaces of holomorphic discs appearing in 
the differential $\parhat^w_{\afat\dfat}$. Finally, there might be discs connecting 
an $\afat\dfat$-intersection with an $\afat\bfat$-intersection. We can explicitly 
characterize whose homotopy classes of Whitney discs belong to this class of discs. 
So, using this characterization we are able to define with them a chain map $f$ as 
it is done in Definition~\ref{mapdef}. By construction the associated mapping cone 
of this map $f$ is isomorphic to the Heegaard Floer homology of the chain complex 
$(\cfhat(\afat,\bfatprime),\parhat_{\afat\bfatprime})$.
\begin{prop}\label{THMTHM} Let $(\Sigma,\afat,\bfat)$ be a 
$\delta$-adapted Heegaard diagram of $Y$ and  denote by $Y^\delta$ 
the manifold obtained from $Y$ by composing the gluing map given 
by the attaching curves $\afat$, $\bfat$ with a positive 
Dehn twist along $\delta$ as indicated in Figure \ref{Fig:figtwo}. 
Then the following holds:
\[
  \hfhat(Y^\delta)
  \cong 
  H_*(\cfhat(\afat,\bfat)
  \oplus
  \cfhat(\afat,\dfat),
  \partial^{f}),
\]
where $\partial^{f}$ is of the form
\[
  \left(\begin{matrix}\parhat_{\afat\bfat}^w & 
  f\\
  0&\parhat_{\afat\dfat}^w\end{matrix}\right)
\]
with $f$ the chain map 
between $(\cfhat(\afat,\dfat),\parhat^w_{\afat\dfat})$ and 
$(\cfhat(\afat,\bfat),\parhat^w_{\afat\bfat})$ given in Definition~\ref{mapdef}.
\end{prop}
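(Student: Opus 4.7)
The plan is to analyze the Heegaard diagram $(\Sigma,\afat,\bfatprime)$ by decomposing its generators and moduli spaces according to the two regions visible in the right-hand side of Figure~\ref{Fig:figtwo}, and then match this decomposition with the mapping cone structure of $f$.

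First, I would set up the identification of generators. Since $\betaprime_1=D_\delta^+(\beta_1)$ agrees with $\beta_1$ outside a neighborhood of $\delta\cap\beta_1$ and with a parallel push-off of $\delta$ inside, the intersection set $\alpha_i\cap\betaprime_1$ naturally splits, for each $\alpha_i$, into points close to the original $\alpha_i\cap\beta_1$ and points close to $\alpha_i\cap\delta$. This induces a canonical bijection
\[
\talpha\cap\tbetaprime\;\longleftrightarrow\;(\talpha\cap\tbeta)\;\sqcup\;(\talpha\cap\tdelta),
\]
which provides a $\ztwo$-module splitting $\cfkhat(\afat,\bfatprime,z,w)\cong\cfkhat(\afat,\bfat,z,w)\oplus\cfkhat(\afat,\dfat,z,w)$. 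Call the two summands $\afat\bfat$- and $\afat\dfat$-generators. I will use this splitting to write the differential $\parhat^w_{\afat\bfatprime}$ as a $2\times 2$ matrix and identify the entries.

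Next, I would analyze each of the four blocks of $\parhat^w_{\afat\bfatprime}$ via domain considerations in the model region of Figure~\ref{Fig:figtwo}. The key observation is the location of the base point $z$ in the thin bigon $\dom_z$: a Whitney disk in $\pitwo^{\afat\bfatprime}(\xfat,\yfat)$ connecting an $\afat\bfat$-generator to an $\afat\dfat$-generator necessarily covers $\dom_z$ with positive multiplicity, so such disks are killed by the $n_z=0$ condition. Hence the lower-left block vanishes. For the diagonal blocks, the domain structure and a standard nearest-point argument (isotoping $\betaprime_1$ towards $\beta_1$, respectively towards $\delta$, outside the Dehn-twist region) identifies holomorphic disks between $\afat\bfat$-generators with disks counted in $\parhat^w_{\afat\bfat}$, and similarly for $\afat\dfat$-generators; here one uses that the extra regions $\domstar$, $\domststar$ both have zero multiplicity for such disks, so the identification preserves the Maslov index and the $n_z=n_w=0$ constraints. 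For the off-diagonal (upper-right) block, which counts disks from $\afat\dfat$-generators to $\afat\bfat$-generators, the same domain analysis forces the multiplicity pair $(n_*(\phi),n_{**}(\phi))$ to be nonzero; this is precisely the class $H(\xfat,\yfat,1)$ appearing in Definition~\ref{mapdef}, so the off-diagonal term is the map $f$.

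Finally, assembling the three identifications gives
\[
\parhat^w_{\afat\bfatprime}=\begin{pmatrix}\parhat_{\afat\bfat}^w & f\\ 0 & \parhat_{\afat\dfat}^w\end{pmatrix},
\]
and from $(\parhat^w_{\afat\bfatprime})^2=0$ one reads off that $f$ is a chain map, so the total complex is exactly the mapping cone of $f$, proving the claim. I expect the main obstacle to be the domain bookkeeping in the model region, i.e.\ rigorously verifying that the local multiplicities $(n_*,n_{**})$ distinguish the off-diagonal disks from the diagonal ones and that no unexpected disks arise which cross $\dom_z$ an even number of times with cancellation; working with a Heegaard diagram induced by an open book, as advertised in Section~\ref{sec:openbook}, simplifies this by forcing the relevant domains into a standard model where the decomposition is transparent.
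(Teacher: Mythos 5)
Your proposal is correct and follows essentially the same strategy as the paper's own sketch (the paper defers the full details to \cite{Saha01} and gives only an outline, which you have reproduced and slightly elaborated): identify $\talpha\cap\tbetaprime$ with $(\talpha\cap\tbeta)\sqcup(\talpha\cap\tdelta)$, show via the position of $z$ that no disk with $n_z=0$ runs from an $\afat\bfat$-generator to an $\afat\dfat$-generator (the vanishing lower-left block), identify the diagonal blocks with $\parhat^w_{\afat\bfat}$ and $\parhat^w_{\afat\dfat}$, and characterize the remaining disks via the multiplicities $(n_*,n_{**})$ to recover $f$. One small point worth tidying: since $n_z$ is a local multiplicity and holomorphic disks have non-negative domains, $n_z(\phi)=0$ already forces the coefficient of $\dom_z$ in $\dom(\phi)$ to vanish identically, so there is no "even number of crossings with cancellation" scenario to worry about; and the middle column of the splitting should be the one-pointed complex $\cfhat(\afat,\bfatprime,z)$ rather than the doubly-pointed $\cfkhat(\afat,\bfatprime,z,w)$, though as $\ztwo$-modules these agree and the domain bookkeeping you describe is exactly what reconciles the $n_z=0$ condition on the left with the $n_z=n_w=0$ conditions in the diagonal entries.
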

As a consequence of this fact we deduce the existence of two exact 
sequences which we call the {\bf Dehn twist sequences}
\begin{eqnarray}
&&
\begin{diagram}[size=2em,labelstyle=\scriptstyle]
 \dots & \rTo^{f_*} &\hfkhat(Y,K)&&\rTo^{\Gamma_1}&&
 \hfhat(Y_{-1}(K))&&\rTo^{\Gamma_2}&&\hfkhat(Y_0(K),\mu)&
 \rTo^{f_*}&\dots
\end{diagram}\label{dtses01}\\
&&
\begin{diagram}[size=2em,labelstyle=\scriptstyle]
 \dots & \rTo^{f_*} &\hfkhat(Y_0(K),\mu)&&
 \rTo^{\Gamma_2}&&\hfhat(Y_{+1}(K))&&\rTo^{\Gamma_1}&&
 \hfkhat(Y,K)&\rTo^{f_*}&\dots
\end{diagram}\label{dtses02}
\end{eqnarray}
where $\mu$ is a meridian of $K$ in $Y$, interpreted as sitting in $Y_0(K)$.
The Dehn twist sequences admit some invariance properties which are
similar to those of the surgery exact sequences in Heegaard Floer theory.
For details we point the reader to \cite{Saha01}.

\section{Surgery Exact Triangle and Dehn Twist Sequence}\label{sec:setadts}
The purpose of this section is to study the Dehn twist sequences and their
relationship to maps induced by cobordisms. Recall that the maps involved
in the definition of the Dehn twist sequence are not the usual cobordism
maps. Suppose we are given a closed, oriented $3$-manifold $Y$, a 
knot $K\subset Y$ and a framed knot $L$ disjoint from $K$.
We can define a map induced by a surgery along $L$ in the following way 
(cf.~\S\ref{cobmapintro}): We choose a Heegaard diagram 
$(\Sigma,\afat,\bfat)$ of $Y$ which is adapted to the 
link $K\sqcup L$. The link $K\sqcup L$ is isotopic to a two-component link
on the Heegaard surface, each of its components being a longitude of a torus component
of $\Sigma$. The knot $K$ induces a pair of points $(w,z)$ on $\Sigma$ such
that the doubly-pointed Heegaard diagram $(\Sigma,\afat,\bfat,w,z)$
encodes the pair $(Y,K)$. Performing a surgery along $L$ induces a third set of attaching circles $\gfat=\{\gamma_1\dots,\gamma_g\}$, $g$ being the
genus of $\Sigma$. The doubly-pointed Heegaard triple diagram $(\Sigma,\afat,\bfat,\gfat,w,z)$ induces a map 
\[
 \Fhat_{\afat,\bfat\gfat}
 \co
 \cfkhat(\Sigma,\afat,\bfat,w,z)
 \lra
 \cfkhat(\Sigma,\afat,\gfat,w,z)
\]
by counting holomorphic triangles with $n_z=n_w=0$ like introduced in 
\S\ref{cobmapintro}. This map descends to a map between the associated homology theories.\vspace{0.3cm}\\ 
In the following, suppose that the knots $L$ and $K$ are isotopic, 
more precisely, $K$ is a push-off of $L$ representing its framing.
In this particular situation, we can choose a Heegaard diagram which is 
adapted to both $K$ and $L$ such that $K$ and $L$ sit in the same torus component 
of $\Sigma$. This situation is somewhat special and can be realized in this
particular situation, only. Without loss of generality, we write $\Sigma$ as
$T^2\#\Sigma'$ and we may assume that both $K$ and $L$ sit in $T^2$. 
Hence, $\beta_1$ is 
a meridian of $K$ (and hence of $L$). There is a longitude $\lambda$ of $T^2$ which represents the framing of $L$. In the following,
the framing of $L$ will be our reference framing. The 
points $w$ and $z$ will lie one on each side of the curve $\beta_1$ (see 
left part of Figure~\ref{Fig:explanation}).
We perform a $(-1)$-surgery along $L$ which changes the $\beta_1$-curve to 
$\gamma_1=\beta_1+\lambda$. By choosing $\gamma_i$, $i\geq 2$ as small isotopic 
translates of the $\beta_i$, the set $\gfat=\{\gamma_1,\dots,\gamma_g\}$ is a 
Heegaard diagram of the surgered manifold. The effect of the surgery to the 
$\bfat$-circles is, that we applied to $\beta_1$ a Dehn twist about $\lambda$. 
For our purposes, it is necessary to move the point $w$ over $\lambda$, 
once (see right part of Figure~\ref{Fig:explanation}). This movement corresponds
to an isotopy of $K$ in $Y$ which makes the knot $K$ cross $L$, once. The new knot
will correspond to the $(-1)$-framing of $L$. Denote by $W$ the cobordism induced 
by the surgery. 
\begin{figure}[t!]
\labellist\small\hair 2pt
\pinlabel {$w$} [br] at 80 279
\pinlabel {$z$} [bl] at 258 279
\pinlabel {$z$} [bl] at 778 279
\pinlabel {$w$} [br] at 594 148
\pinlabel {$\lambda$} [t] at 333 132
\pinlabel {$\lambda$} [t] at 849 132
\pinlabel {$\beta_1$} [r] at 125 51
\pinlabel {$\beta_1$} [r] at 645 51
\pinlabel {isotopy of $w$} [b] at 430 219
\endlabellist
\centering
\includegraphics[height=5cm]{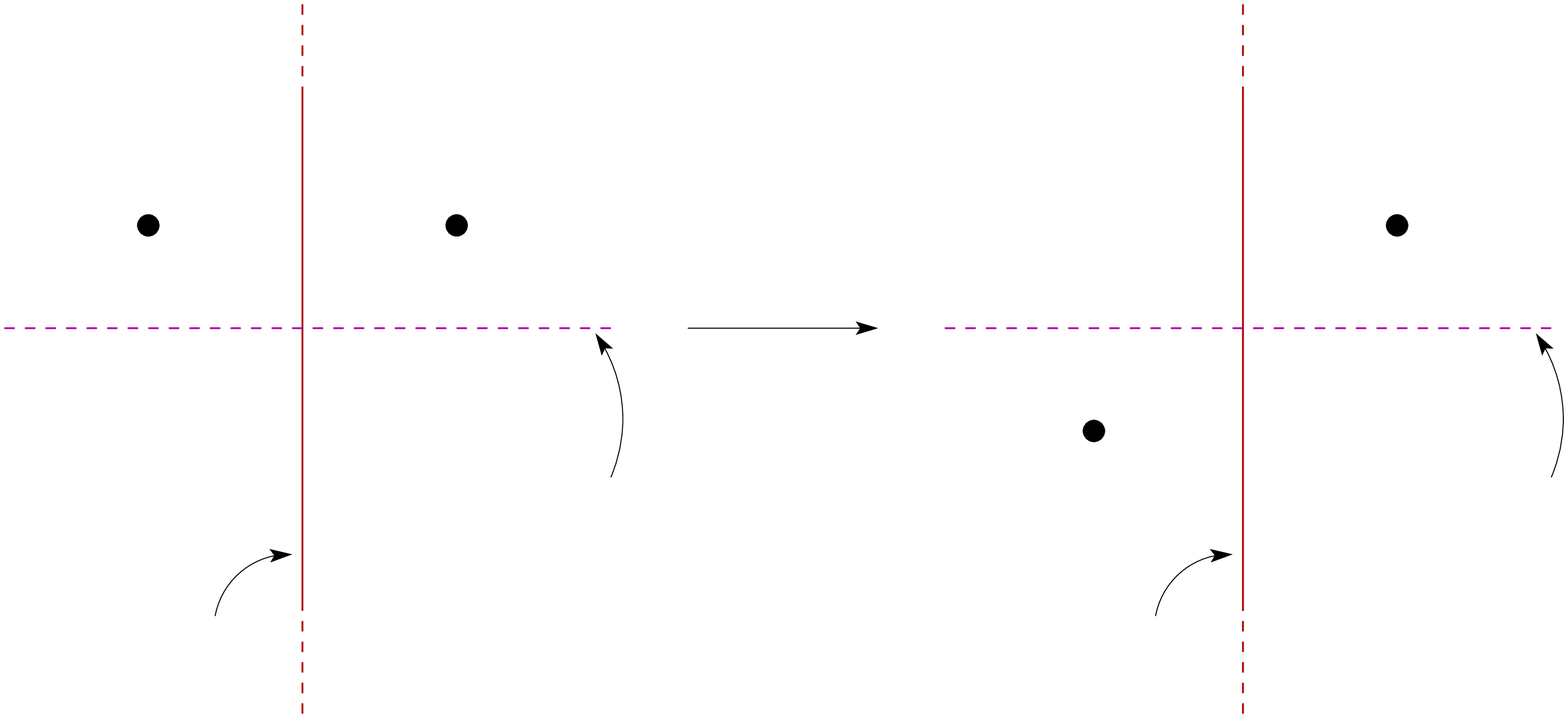}
\caption{Movement of $w$ representing an isotopy of $K$ that 
makes $K$ cross $L$, once.}
\label{Fig:explanation}
\end{figure}
We see that after the surgery the knot induced by the pair of
base points at the right of Figure~\ref{Fig:explanation} will be the unknot, i.e. we have a map
\[
 \Fhat_1\co
 \hfkhat(Y,K)
 \lra
 \hfkhat(Y_{-1}(L),U)=\hfhat(Y_{-1}(K)).
\]
We would like to show that this map sits in the exact triangle 
(see~Proposition~\ref{firsttriangle})

\[\begin{diagram}[size=2em,labelstyle=\scriptstyle] \dots& \rTo^{\partial_*}& \hfkhat(Y,K)& & \rTo^{\Fhat_{1}}& \hfhat(Y_{-1}(K))& \rTo^{\Fhat_{2}}& & \hfkhat(Y_0(K),\mu)& \rTo^{\partial_*}& \dots \\ \end{diagram}
\]
where the maps $\Fhat_i$, $i=1,2$ are defined by counting holomorphic triangles in 
suitable Heegaard triple diagrams. We will try to relate this sequence to the Dehn 
twist sequences introduced in \cite{Saha01}. To do that we will carefully analyze 
the involved Heegaard triple diagrams. Although not essential, it is opportune to 
work with Heegaard diagrams that are induced by open books. In this particular 
situation the analysis of the Heegaard diagrams and of the domains of Whitney 
triangles become easier.
So, suppose we are given an abstract open book $(P,\phi)$. This abstract open
book induces a Heegaard digram $(\Sigma,\afat,\bfat,z)$ by the algorithm
given in \S\ref{sec:openbook}. Without loss of generality we may think 
this Heegaard diagram
to be adapted to the knot $K$ (and hence adapted to $L$). Indeed, we may think
$L$ to be isotopic to a homologically essential, simple closed curve 
$\delta$ on the page $P$ of the abstract open book which intersects
$\beta_1$ once, transversely and is disjoint from the other 
$\bfat$-circles (see~\cite[Corollary 4.23]{Etnyre01} and 
\cite[Lemma 4.2]{Saha01}). We continue by defining the following sets of attaching circles
\begin{eqnarray*}
  \bfatprime
  &=&
  \{\betaprime_1,\dots,\betaprime_g\}\\
  \dfattilde
  &=&
  \{\deltatilde,\beta^{''}_2,\dots,\beta^{''}_g\},
\end{eqnarray*}
where $\betaprime_1=D_\delta^+(\beta_1)$ and $D_\delta^+$ denotes a 
positive Dehn twist about $\delta$. The $\betaprime_i$, $i\geq 2$, are 
isotopic push-offs of the $\beta_i$ such that $\beta_i$ and $\betaprime_i$ 
intersect in a cancelling pair of intersection points. Furthermore, let 
$\beta^{''}_i$, $i\geq 2$, be push-offs of the $\betaprime_i$. As above, 
the push-offs are chosen such that the $\beta^{''}_i$ and $\beta_i'$ 
intersect in a cancelling pair of intersection points. The curve 
$\deltatilde$ is given as a perturbation of the curve $\delta$, like indicated 
in Figure~\ref{Fig:atcirc}.
\begin{figure}[t!]
\labellist\small\hair 2pt
\pinlabel {$w$} [r] at 84 339
\pinlabel {$w$} [r] at 526 339
\pinlabel {$w$} [r] at 84 100
\pinlabel {$w$} [r] at 526 100
\pinlabel {$\dom_z$} [bl] at 321 386
\pinlabel {$\dom_z$} [bl] at 763 386
\pinlabel {$\dom_z$} [bl] at 321 147
\pinlabel {$\dom_z$} [bl] at 763 147
\pinlabel {$\alpha_1$} [b] at 342 308
\pinlabel {$\alpha_1$} [b] at 784 308
\pinlabel {$\alpha_1$} [b] at 342 82 
\pinlabel {$\alpha_1$} [b] at 784 82
\pinlabel {$z$} [r] at 274 403
\pinlabel {$z$} [r] at 716 403
\pinlabel {$z$} [r] at 274 164
\pinlabel {$z$} [r] at 716 164
\pinlabel {$(a)$} [b] at 102 17
\pinlabel {$(b)$} [b] at 580 17
\pinlabel {$(c)$} [b] at 102 253
\pinlabel {$(d)$} [b] at 580 253
\pinlabel {$_1$} [b] at 380 13
\pinlabel {$_1$} [b] at 822 13
\pinlabel {$_1$} [b] at 380 252
\pinlabel {$_1$} [b] at 822 252
\pinlabel {$_2$} [l] at 351 41
\pinlabel {$_2$} [l] at 793 41
\pinlabel {$_2$} [l] at 351 280
\pinlabel {$_2$} [l] at 793 280
\pinlabel {$\delta$} [t] at 58 390
\pinlabel {$\widetilde{\delta}$} [l] at 631 256
\pinlabel {$\beta_1$} [B] at 173 149
\pinlabel {$\betaprime_1$} [t] at 210 43
\pinlabel {$\widetilde{\beta}_1$} [t] at 654 43
\endlabellist
\centering
\includegraphics[height=6cm]{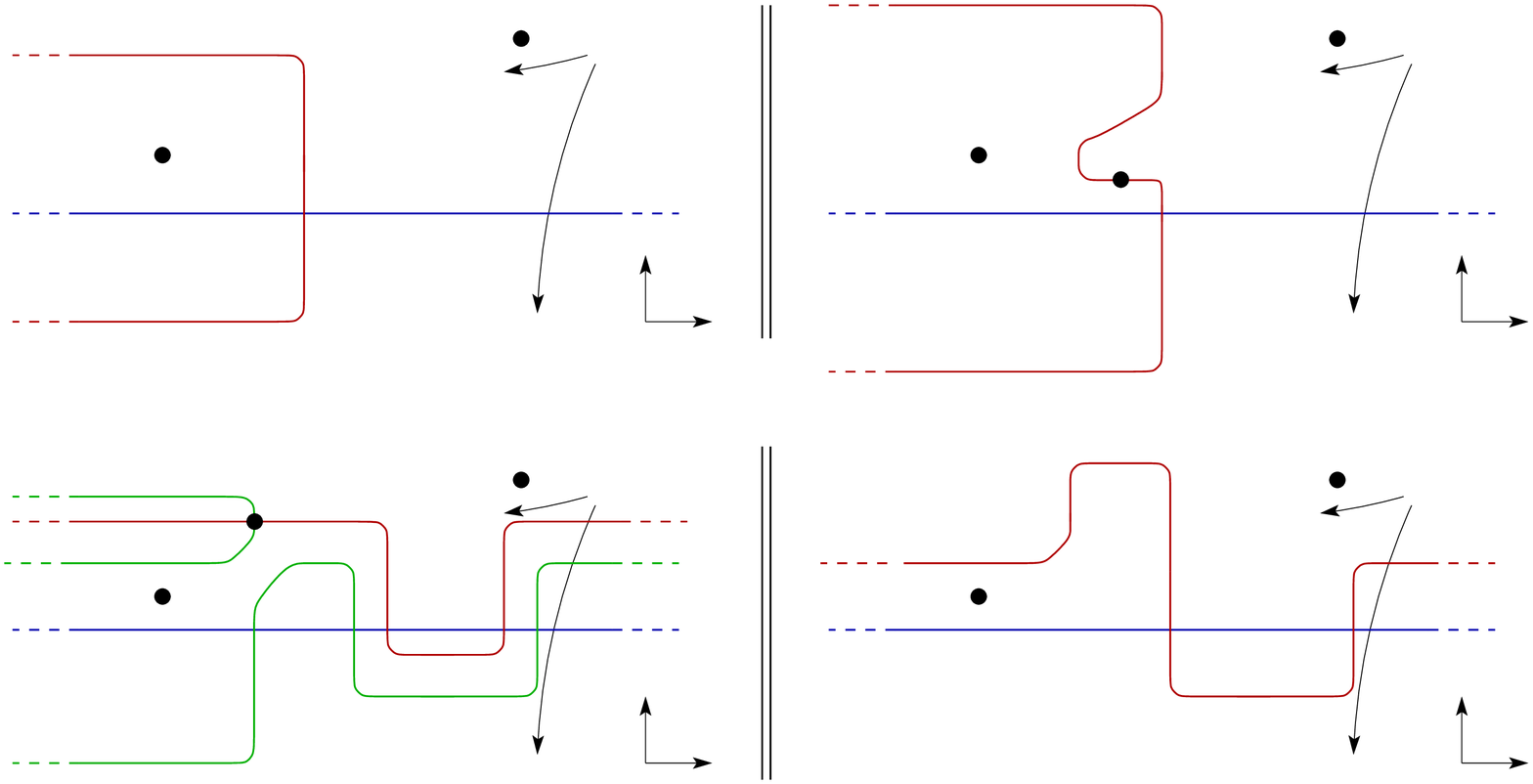}
\caption{The relevant attaching circles.}
\label{Fig:atcirc}
\end{figure}
Since the Heegaard diagram is induced by an open book, the $\alpha_1$-curve
in Figure~\ref{Fig:atcirc} is {\it surrounded } by the domain of the
base point $z$. Figure~\ref{Fig:atcirc} additionally shows the surface orientation.
We will have a close look at the following two cobordism maps:
\begin{eqnarray*}
 \Fhat_{\afat,\bfat\bfatprime}
 &\co&
 \cfkhat(\afat,\bfat,z,w)
 \lra
 \cfhat(\afat,\bfatprime,z)\\
 \Fhat_{\afat,\bfatprime\dfattilde}
 &\co&
 \cfhat(\afat,\bfatprime,z)
 \lra
 \cfkhat(\afat,\dfattilde,z,w).
\end{eqnarray*}
Since the Heegaard surface $\Sigma$ remains fixed throughout our discussion
we suppressed it from the notation. These two cobordism maps correspond to
the maps $\Fhat_1$ and $\Fhat_2$ of the Sequence~$(\ref{knotses02})$. 
Denote by $\dfat$ the set of
attaching circles $\{\delta,\betaprime_2,\dots,\betaprime_g\}$. 
By considerations from \cite{Saha01} (cf.~\S\ref{DTSEQ}) we see
that we have a short exact sequence of chain complexes.
\begin{equation}
\begin{diagram}[size=2em,labelstyle=\scriptstyle]
 0&\rTo&\cfkhat(\afat,\bfattilde,z,w)
  & \rTo^{\Gamma_1} & 
 \cfhat(\afat,\bfatprime,z) & 
 \rTo^{\Gamma_2} 
 & \cfkhat(\afat,\dfat,z,w)&\rTo0.
\end{diagram}\label{seqseq02}
\end{equation}
We see that the sequences given in (\ref{knotses02}) and (\ref{seqseq02}) 
coincide at the middle term, namely at $\cfhat(\afat,\bfatprime,z)$.
\begin{figure}[t!]
\labellist\small\hair 2pt
\pinlabel {boundary of $P$} [b] at 233 325
\pinlabel {$\beta_2$} [l] at 369 312
\pinlabel {$\alpha_2$} [l] at 369 276
\pinlabel {$\betaprime_1$} [l] at 369 204
\pinlabel {$\alpha_1$} [l] at 369 168
\pinlabel {$\alpha_2$} [l] at 369 61
\pinlabel {$\beta_2$} [l] at 369 25
\pinlabel {$\beta_1$} [B] at 171 233
\pinlabel {$z$} [r] at 273 248
\pinlabel {$\dom_z$} [l] at 321 243
\pinlabel {$w$} [l] at 93 187
\pinlabel {$_1$} [b] at 378 100
\pinlabel {$_2$} [l] at 352 125
\endlabellist
\centering
\includegraphics[height=5cm]{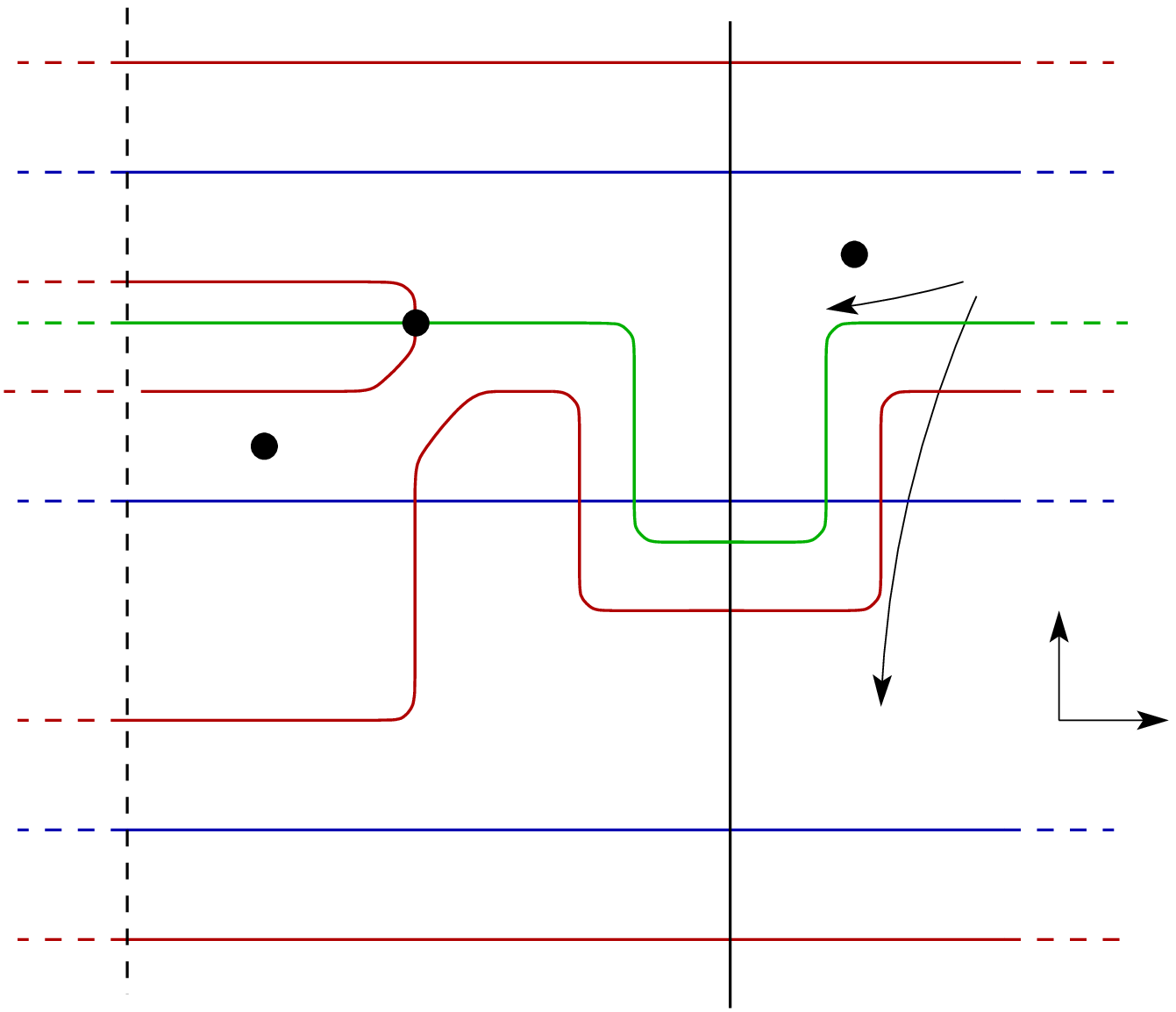}
\caption{Heegaard triple diagram for defining $\Fhat_{\afat,\bfat\bfatprime}$.}
\label{Fig:seq2-01}
\end{figure}
\begin{lem}\label{lemSES} The maps
$\Fhat_{\afat,\bfat\bfatprime}$ and $\Fhat_{\afat\bfatprime\dfat}$
respect the splitting of $\cfhat(\afat,\bfatprime,z)$, given 
in Proposition \ref{THMTHM}, i.e.~given by the Sequence~(\ref{seqseq02}).
\end{lem}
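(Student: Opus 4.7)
The plan is to perform a careful case analysis of the doubly-pointed Heegaard triple diagrams $(\Sigma,\afat,\bfat,\bfatprime,w,z)$ and $(\Sigma,\afat,\bfatprime,\dfattilde,w,z)$ near the local picture shown in Figure~\ref{Fig:atcirc}, using the two basepoint constraints $n_w=n_z=0$ together with positivity of the domains to exclude the ``wrong'' endpoints. Recall from \S\ref{DTSEQ} that a generator of $\cfhat(\afat,\bfatprime,z)$ is either an $\afat\bfat$-intersection (one in which the local component of the generator on $\betaprime_1$ lies near the $\beta_1$-part of $\betaprime_1$) or an $\afat\dfat$-intersection (the local component lies near the $\delta$-part of $\betaprime_1$); the splitting in (\ref{seqseq02}) is exactly this division of generators as an $\F_2$-vector space.

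First I would pin down the ``top'' generators $\zfat^+_{\bfat\bfatprime}$ and $\zfat^+_{\bfatprime\dfattilde}$ used in the definition of the triangle maps. Because the curves $\betaprime_i$ (resp.\ $\beta^{''}_i$) for $i\geq 2$ are small Hamiltonian push-offs of $\beta_i$ (resp.\ $\betaprime_i$), the components of $\zfat^+$ on these curves are canonically determined. The only interesting component lies in the torus region of Figure~\ref{Fig:atcirc} where $\beta_1$ meets $\betaprime_1$ (or where $\betaprime_1$ meets $\deltatilde$): here the two curves intersect in a cancelling pair, and $\zfat^+$ is the intersection point that can be connected to the other by a small bigon disjoint from $z$ and from the basepoint region. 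I would identify explicitly which of the two intersections this is, using the surface orientation indicated in Figure~\ref{Fig:atcirc}.

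Next, for $\Fhat_{\afat,\bfat\bfatprime}$, I would take a homotopy class of Whitney triangle $\phi$ with vertices $\xfat\in\talpha\cap\tbeta$, $\zfat^+_{\bfat\bfatprime}$ and some $\yfat\in\talpha\cap\tbetaprime$, and study its domain $\mathcal{D}(\phi)$. Outside the local picture the $\beta$- and $\betaprime$-curves are small push-offs, so the domain is essentially determined there. Inside the local picture, I would enumerate the elementary regions cut out by $\afat$, $\bfat$, $\bfatprime$, and read off that if $\yfat$ were an $\afat\dfat$-intersection then any triangle domain with the required corner at $\zfat^+_{\bfat\bfatprime}$ must contain with strictly positive multiplicity a region adjacent to $w$ (because to reach the $\delta$-side of $\betaprime_1$ starting from $\xfat$ and going through $\zfat^+_{\bfat\bfatprime}$, the domain has to sweep across the local rectangle containing $w$). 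This contradicts $n_w(\phi)=0$. Hence $\yfat$ must be an $\afat\bfat$-intersection, which is what it means for $\Fhat_{\afat,\bfat\bfatprime}$ to land in the subcomplex.

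For $\Fhat_{\afat,\bfatprime\dfattilde}$ the argument is the mirror image: starting from $\yfat\in\talpha\cap\tbetaprime$ and ending at some $\wfat\in\talpha\cap\tdfattilde$, I would run the same positivity-plus-basepoint analysis in the local picture between $\betaprime_1$ and $\deltatilde$. The claim to verify is that if $\yfat$ is an $\afat\bfat$-intersection then no domain with all the required corners can avoid the region containing $z$ (forcing $n_z(\phi)\geq 1$), so only $\afat\dfat$-inputs contribute. Together, the two claims say exactly that the two triangle maps are compatible with the subcomplex/quotient decomposition of Sequence~(\ref{seqseq02}), i.e.\ they respect the splitting.

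The main obstacle I expect is the bookkeeping of the local domains in Figure~\ref{Fig:atcirc}: one must be careful because $w$ has been isotoped across $\lambda$ (as in Figure~\ref{Fig:explanation}), which moves $w$ into a region adjacent to both $\beta_1$ and $\betaprime_1$ and is precisely the feature that forces the restrictions above. The rest is a routine translation between local triangle corners and the labels $\afat\bfat$- vs.\ $\afat\dfat$-intersection introduced in \S\ref{DTSEQ}.
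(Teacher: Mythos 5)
Your overall strategy coincides with the paper's: work locally in the torus region of Figure~\ref{Fig:atcirc}, identify the top generator $\zfat^+$ of the small Heegaard diagram between the two $\bfat$-type curve sets, and use positivity of domains together with $n_w=n_z=0$ to show that a Whitney triangle whose $\afat\bfatprime$- (resp.\ $\afat\dfattilde$-) corner lies on the wrong side of the splitting must have a basepoint multiplicity forced to be nonzero.

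However, your case analysis for $\Fhat_{\afat,\bfat\bfatprime}$ has a gap. You assert that if the output $\yfat$ is an $\afat\dfat$-intersection, then any positive triangle domain with the prescribed corner at $\zfat^+_{\bfat\bfatprime}$ ``must contain with strictly positive multiplicity a region adjacent to $w$,'' and you conclude $n_w>0$. This is too strong. The $\bfatprime$-boundary arc of the triangle runs along the circle $\betaprime_1$ from the corner near $\zfat^+_{\bfat\bfatprime}$ to the $\delta$-side of $\betaprime_1$, and a priori it can travel either way around the circle. One direction forces the boundary to pass through the component containing $w$, giving $n_w\neq 0$; but the other direction forces it to pass through the component containing $z$, giving $n_z\neq 0$ instead of $n_w\neq 0$. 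Your argument only excludes the first direction, so as stated it leaves open the possibility of a triangle whose $\bfatprime$-boundary goes the other way and avoids $w$. The paper handles exactly this dichotomy via the two pictures in Figure~\ref{Fig:seq2-02}: the left picture shows the $n_w>0$ case and the right picture the $n_z>0$ case, and only after both directions are excluded can one conclude that no triangle contributes. The same issue appears in your argument for $\Fhat_{\afat,\bfatprime\dfattilde}$, where you assert that $n_z\geq 1$ is forced; again you must separately rule out the direction around $\deltatilde$ which would instead force $n_w\geq 1$. The fix is easy --- simply enumerate both directions for the boundary arc and observe that each hits one of the two basepoint regions --- but it needs to be written down to make the proof complete.
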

\begin{proof} We show that the claim is true for the 
map $\Fhat_{\afat,\bfat\bfatprime}$. We look at 
Figure~\ref{Fig:seq2-01} and try to show that there is 
no holomorphic triangle from an $\ab$-intersection to an 
$\ad$-intersection (cf.~\S\ref{DTSEQ}) that contributes to 
$\Fhat_{\afat,\bfat\bfatprime}$: 
\begin{figure}[t!]
\labellist\small\hair 2pt
\pinlabel {$\hattheta$} [b] at 186 168
\pinlabel {$\hattheta$} [b] at 621 168
\pinlabel {$z$} [r] at 272 172 
\pinlabel {$z$} [r] at 708 172
\pinlabel {$\dom_z$} [l] at 319 162
\pinlabel {$\dom_z$} [l] at 755 162
\pinlabel {$\beta_1$} [l] at 211 142
\pinlabel {$\beta_1$} [l] at 648 142
\pinlabel {$\betaprime_1$} [l] at 369 128
\pinlabel {$\betaprime_1$} [l] at 804 128
\pinlabel {$w$} [l] at 93 109
\pinlabel {$w$} [l] at 530 109
\pinlabel {$\alpha_1$} [l] at 369 91
\pinlabel {$\alpha_1$} [l] at 804 91
\pinlabel {$_1$} [b] at 383 22
\pinlabel {$_2$} [l] at 353 48
\pinlabel {$_1$} [b] at 820 22
\pinlabel {$_2$} [l] at 790 48
\endlabellist
\centering
\includegraphics[width=13cm]{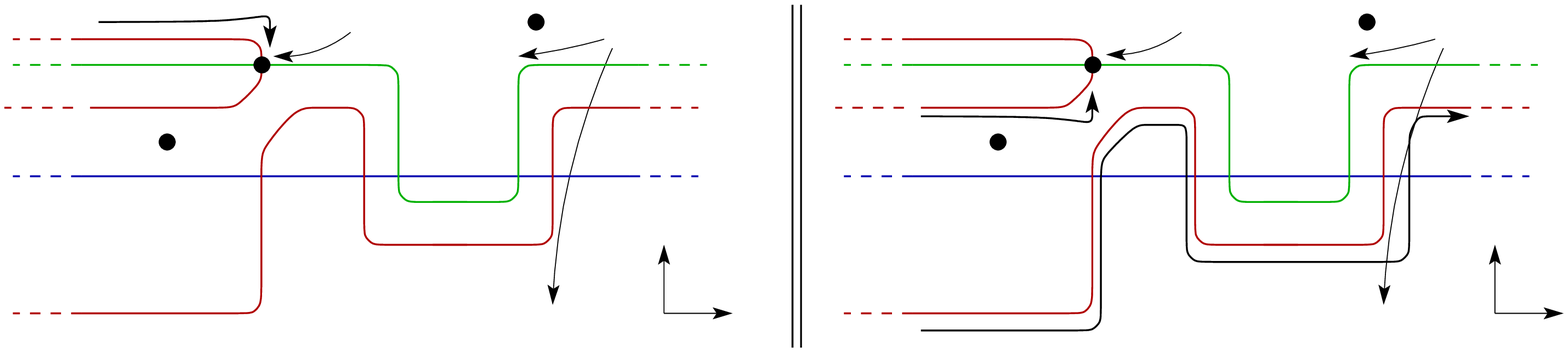}
\caption{Here we can see that $\Fhat_{\afat,\bfat\bfatprime}$ respects 
the splitting.}
\label{Fig:seq2-02}
\end{figure}

Let $\phi$ be a triangle that connects a point 
$\xfat\in\talpha\cap\tbeta$ with a point 
$\yfat\in\talpha\cap\tdelta\subset\talpha\cap\tbetaprime$. The triangle $\phi$
connects $\yfat$ with $\hattheta$ along its $\bfatprime$-boundary. 
In Figure~\ref{Fig:seq2-02}, we illustrate the two possible ways to do 
that. In both cases the $\bfatprime$-boundary of $\phi$ follows the 
black arrow pictured there. We either cause a non-negative intersection 
number $n_w$ (cf.~left of Figure~\ref{Fig:seq2-02}) or a non-negative 
intersection number $n_z$ (cf.~right part of Figure~\ref{Fig:seq2-02}). 
Thus, $n_w(\phi)\not=0$ or
$n_z(\phi)\not=0$, which shows that $\phi$ does not contribute to 
$\Fhat_{\afat,\bfat\bfatprime}$. A similar line of arguments can 
be used to prove the claim for $\Fhat_{\afat,\bfatprime\dfat}$. 
\end{proof}
As a consequence of the last lemma we see that
\[
  \Fhat_{\afat,\bfatprime\dfattilde}
  \circ
  \Fhat_{\afat,\bfat\bfatprime}=0.
\]
Indeed, we can prove the following result.
\begin{lem}\label{lemSES2} The diagram
\begin{diagram}[size=2em,labelstyle=\scriptstyle]
  \cfkhat(\afat,\bfat,z,w) &
   &
  \rTo^{\Fhat_{\afat,\bfat\bfatprime}} &
   &
  \cfhat(\afat,\bfatprime,z)\\
  \dTo^{\Fhat_{\afat,\bfat\widetilde{\bfat}}} &
   &
   &
  \ruInto(4,2)_{\iota} &\\
  \cfkhat(\afat,\widetilde{\bfat},z,w) &
   &
   &
   &\\
\end{diagram}
commutes where $\iota$ denotes the inclusion induced by a
natural identification of generators.
\end{lem}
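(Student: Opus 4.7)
The plan is to check commutativity of the diagram term-by-term on a generator $\xfat\in\talpha\cap\tbeta$. By Lemma~\ref{lemSES}, $\Fhat_{\afat,\bfat\bfatprime}(\xfat)$ already lies in the $\afat\bfat$-subcomplex of $\cfhat(\afat,\bfatprime,z)$, i.e.\ in the image of $\Gamma_1$ from Sequence~\eqref{seqseq02}. Since $\bfattilde$ is a small Hamiltonian push-off of $\bfat$ and the curves $\betaprime_i$ for $i\ge 2$ are push-offs of $\beta_i$ that agree with $\widetilde{\beta}_i$ away from a negligible region, while $\betaprime_1$ agrees with $\widetilde{\beta}_1$ outside a neighborhood of $\delta$, there is a canonical bijection between the $\afat\bfat$-intersections in $\talpha\cap\tbetaprime$ and the generators of $\cfkhat(\afat,\bfattilde,z,w)$. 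This bijection is precisely the inclusion $\iota$, so it remains to compare triangle counts on the $\ab$-summand.

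Under this identification, I would show that for every $\xfat$ and every pair $\yfat'\leftrightarrow\tilde{\yfat}$ of matched $\ab$-intersections,
\[
  \#\mM^{\afat\bfat\bfatprime}\!\bigl(\xfat,\hattheta^+_{\bfat\bfatprime},\yfat'\bigr)
  \;=\;
  \#\mM^{\afat\bfat\bfattilde}\!\bigl(\xfat,\hattheta^+_{\bfat\bfattilde},\tilde{\yfat}\bigr).
\]
The key observation driving the proof is a domain-tracking argument: any domain $\dom(\phi)$ of a triangle contributing to the left-hand side must avoid the Dehn-twist region near $\delta$. Indeed, if part of the $\bfatprime$-boundary of $\phi$ ran along the twisted portion of $\betaprime_1$ on its way to an $\ab$-endpoint, the same local analysis used in the proof of Lemma~\ref{lemSES} (compare Figure~\ref{Fig:seq2-02}) would force either $n_w(\phi)>0$ or $n_z(\phi)>0$, contrary to the definition of the cobordism map. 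Restricted to the complement of this twist region, the triple $(\afat,\bfat,\bfatprime)$ is pointwise identical to $(\afat,\bfat,\bfattilde)$, and the intersection points representing $\hattheta^+_{\bfat\bfatprime}$ and $\hattheta^+_{\bfat\bfattilde}$ lie in this common region thanks to the placement of $z$ shown in Figure~\ref{Fig:atcirc}. Consequently the domains, and (for a compatible path of almost complex structures) their holomorphic representatives, are in natural bijection.

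The main obstacle will be carrying out this domain analysis with full rigor: one has to verify that the bijection of domains preserves Maslov index, preserves the vanishing of $n_w$ and $n_z$, transports the top-generator vertex correctly, and induces a bijection at the level of $0$-dimensional components of the triangle moduli spaces. Because our Heegaard diagram is induced by an open book, the geometry near $\delta$ and its interaction with the base-point domains is transparent, and this verification reduces to direct inspection of the local pictures of Figures~\ref{Fig:atcirc} and~\ref{Fig:seq2-01}, very much in the spirit of the proofs of Lemma~\ref{lemSES} and of the mapping-cone theorem of~\cite{Saha01}. Once the bijection of domains is in place, a standard isotopy argument interpolating between the two triples on the common region identifies the mod-$2$ counts of the relevant $0$-dimensional moduli spaces, yielding the claimed commutativity.
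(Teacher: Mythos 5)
Your proposal follows the paper's own approach: identify generators via $\iota$, use the $n_w=n_z=0$ constraints to show contributing triangles cannot use the domains near the Dehn-twist region, observe that $(\afat,\bfat,\bfatprime)$ and $(\afat,\bfat,\bfattilde)$ coincide there, and conclude a bijection of moduli spaces. One small point worth flagging: your argument explicitly traces only the $h$-side (triangles for $\Fhat_{\afat,\bfat\bfatprime}$ avoid the twist region), while the paper analyzes both sides symmetrically — it also pins down the domains that $g$-triangles (for $\Fhat_{\afat,\bfat\bfattilde}$) must avoid, as shown in the right part of Figure~\ref{Fig:seq2-03} and Figure~\ref{Fig:proof-b}. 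Without that second direction, you only get an injection of $h$-triangles into $g$-triangles rather than the claimed bijection; you should spell out the analogous constraint on $g$-triangles to close the argument, just as you indicate in your final paragraph that the verification ``reduces to direct inspection of the local pictures.'' With that addition, your route matches the paper's (which in turn defers the final identification of moduli spaces to the technique used in the proof of Proposition~\ref{THMTHM}, a continuation-type argument, rather than an isotopy argument as you phrase it — these amount to the same mechanism here).
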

\begin{figure}[t!]
\labellist\small\hair 2pt
\pinlabel {regions not used} [l] at 360 331
\pinlabel {by holomorphic} [l] at 360 306
\pinlabel {triangles} [l] at 360 281
\pinlabel {regions not used} [l] at 363 53
\pinlabel {by holomorphic} [l] at 363 28
\pinlabel {triangles} [l] at 363 3
\pinlabel {$\hattheta$} [l] at 168 238
\pinlabel {$z$} [r] at 273 242
\pinlabel {$z$} [r] at 714 244
\pinlabel {$\dom_z$} [b] at 322 231
\pinlabel {$\dom_z$} [b] at 764 231
\pinlabel {$\beta_1$} [b] at 502 225
\pinlabel {$\beta_1$} [l] at 211 207
\pinlabel {$w$} [l] at 93 181
\pinlabel {$w$} [l] at 535 181
\pinlabel {$\alpha_1$} [l] at 368 162
\pinlabel {$\alpha_1$} [l] at 811 162
\pinlabel {$\hattheta$} [t] at 585 138
\pinlabel {$\betaprime_1$} [t] at 213 124
\pinlabel {$\betatilde_1$} [t] at 650 124
\pinlabel {$_2$} [l] at 349 117
\pinlabel {$_2$} [l] at 791 117
\pinlabel {$_1$} [l] at 372 100
\pinlabel {$_1$} [l] at 813 100
\endlabellist
\centering
\includegraphics[width=13.5cm]{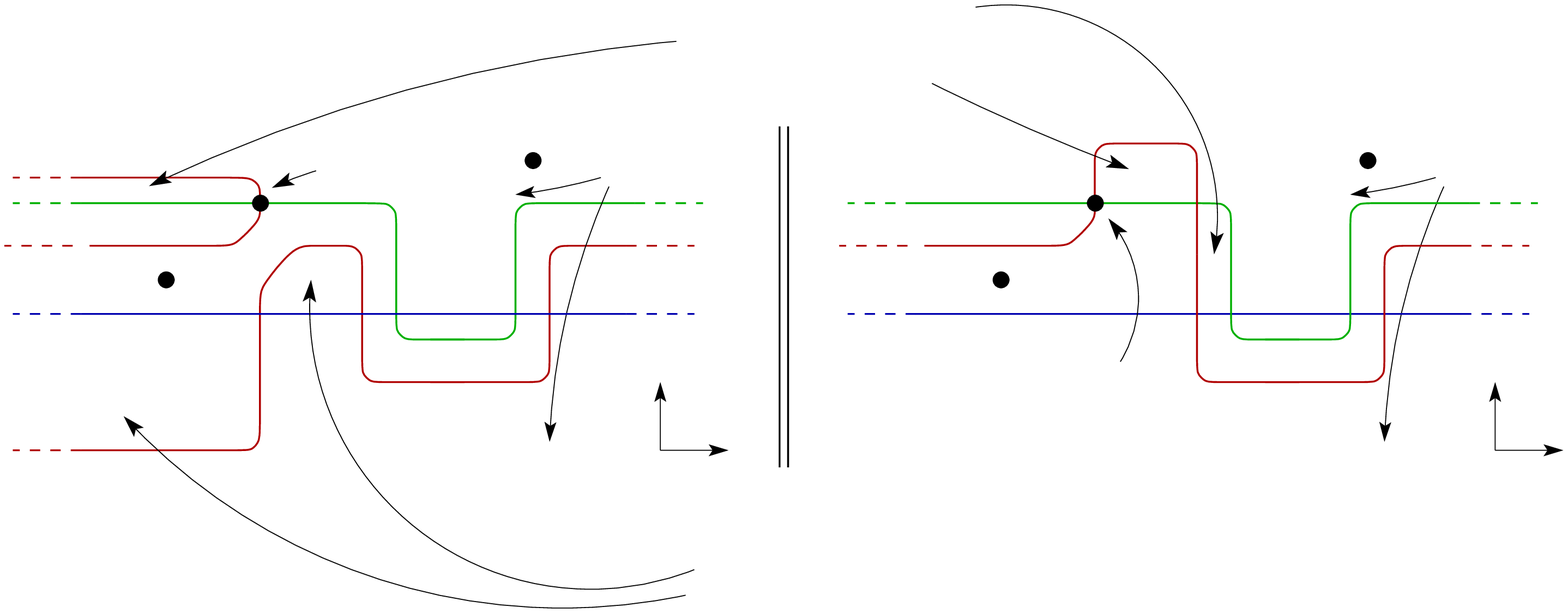}
\caption{Comparing the boundary conditions of
 $\Fhat_{\afat,\bfat\bfatprime}$ and 
  $\Fhat_{\afat,\bfat\bfattilde}$.}  
\label{Fig:seq2-03}
\end{figure}
For simplicity let us denote by 
$h$ the map $\Fhat_{\afat,\bfat\bfatprime}$ 
and by $g$ the map $\Fhat_{\afat,\bfat\widetilde{\bfat}}$. There is 
a canonical inclusion
\[
  \iota\co
  \cfkhat(\afat,\bfattilde,z,w)
  \lra
  \cfhat(\afat,\bfatprime,z,w)
\]
induced by an identification of intersection points. Namely, observe
that
\begin{eqnarray*}
  \talpha\cap\tbetaprime
  &=&
  \talpha\cap\tbeta\sqcup\talpha\cap\tdelta\\
  &=&
  \talpha\cap\mathbb{T}_{\betatilde}\sqcup\talpha\cap\tdelta
\end{eqnarray*}
in case $\bfattilde$ is a suitable perturbation of $\bfat$ we 
will define in a moment. We define $\betatilde_i=\beta_i$, for 
all $i\geq 2$, and $\betatilde_1$ as indicated in 
Figure~\ref{Fig:seq2-03} (see also Figure~\ref{Fig:atcirc}). We would 
like to show that $h=\iota\circ g$. 
\begin{definition}\label{usedomain} Let $(\Sigma,\afat,\bfat,z)$ be a 
Heegaard diagram and denote by $\dom_1,\dots,\dom_k$ the components 
of $\Sigma\backslash\{\afat\cup\bfat\}$. We say that a Whitney 
disc $\phi$ {\bf does not use} a domain $\dom_i$, $i\in\{1,\dots,k\}$, 
if the domain $\dom_i$ does not appear in $\dom(\phi)$, 
i.e.~writing $\dom(\phi)$ as 
\[
  \dom(\phi)
  =
  \sum_{j=1}^k d_{j}\cdot\dom_j,
\]
the coefficient $d_i$ vanishes. We also say that the 
domain $\dom(\phi)$ {\bf does not use} $\dom_i$.
\end{definition}
The main idea is to first prove that given 
intersections $\xfat$, $\yfat\in\talpha\cap\tbeta$, all positive 
domains $\dom$, i.e.~all coefficients in $\dom$ 
are greater than or equal to $0$, connecting $\xfat$ and $\yfat$, 
with $n_w(\dom)=n_z(\dom)=0$, do not use certain 
components of 
$\Sigma\backslash\{\afat\cup\bfat\}$ or 
$\Sigma\backslash\{\afat\cup\bfattilde\}$. 
Which domains are expected not to be used is indicated in 
Figure~\ref{Fig:seq2-03}, the left part illustrating the 
situation for $h$, the right part illustrating the situation 
for $g$. With this information, we compare the boundary conditions of 
holomorphic triangles for $h$ and $g$. The conclusion will be that, 
with its $\bfatprime$-boundary, 
the holomorphic triangles counted by $h$ always stay 
inside $\tbetaprime\cap\tbetatilde$. 
And, with their $\bfattilde$-boundary, holomorphic triangles counted 
by $g$ stay inside $\tbetatilde\cap\tbetaprime$. Thus, we are able to 
identify the moduli spaces of holomorphic triangles contributing 
to $h$ and $g$ with arguments similar to those used in the proof 
of Proposition \ref{THMTHM}.
\begin{proof} 
Figure~\ref{Fig:seq2-03} shows the part of the Heegaard triple
diagrams where the boundary conditions for the holomorphic triangles 
involved in the definition of $h$ and $g$ differ. The picture 
illustrates which regions are not used by holomorphic triangles that 
contribute to $h$ and $g$. This has to be shown in the following: 
We start our discussion with the map $h$ and look at 
Figure~\ref{Fig:proof-a}. Each part of Figure~\ref{Fig:proof-a} covers
one of the cases which we will discuss in the following. The different 
parts of Figure~\ref{Fig:proof-a} show parts of the Heegaard diagram 
pictured in the left of Figure~\ref{Fig:seq2-03}. We focused
on those parts important to our arguments. Denote by $\phi$ a holomorphic 
triangle that contributes to $h$. The domains, which we want to show not 
to be used by $\phi$, will be denoted by $\dom_{x_i}$, $i=1,2,3$. In each 
of these regions we fix a point $x_i$, $i=1,2,3$. If $\phi$ uses one of 
the domains $\dom_{x_i}$, the associated intersection number $n_{x_i}$ is 
non-zero.
\begin{figure}[t!]
\labellist\small\hair 2pt
\pinlabel {$\alpha$} [br] at 95 219
\pinlabel {$\beta$} [bl] at 130 218
\pinlabel {$\betaprime$} [t] at 113 191
\pinlabel {Interesting holomorphic} [l] at 194 241
\pinlabel {triangles} [l] at 194 216
\pinlabel {$\hattheta$} [l] at 154 189
\pinlabel {$\hattheta$} at 545 194
\pinlabel {$z$} [l] at 463 168
\pinlabel {$\beta_1$} [b] at 735 144
\pinlabel {$z$} [l] at 768 162
\pinlabel {$\beta_1$} [l] at 480 143
\pinlabel {$x_3$} [l] at 776 131
\pinlabel {$w$} [l] at 88 104
\pinlabel {$w$} [l] at 377 104
\pinlabel {$w$} [l] at 660 104
\pinlabel {$\alpha_1$} [b] at 227 88
\pinlabel {$\alpha_1$} [b] at 510 88
\pinlabel {$\alpha_1$} [b] at 794 88
\pinlabel {$x_2$} at 448 100
\pinlabel {$x_1$} at 66 55
\pinlabel {$\betaprime_1$} [b] at 36 17
\pinlabel {$\betaprime_1$} [b] at 323 17
\pinlabel {$\betaprime_1$} [b] at 602 28
\pinlabel {$_1$} [b] at 242 21
\pinlabel {$_1$} [b] at 516 23
\pinlabel {$_1$} [b] at 800 13
\pinlabel {$_2$} [l] at 214 46
\pinlabel {$_2$} [l] at 491 49
\pinlabel {$_2$} [l] at 775 44
\endlabellist
\centering
\includegraphics[width=12cm]{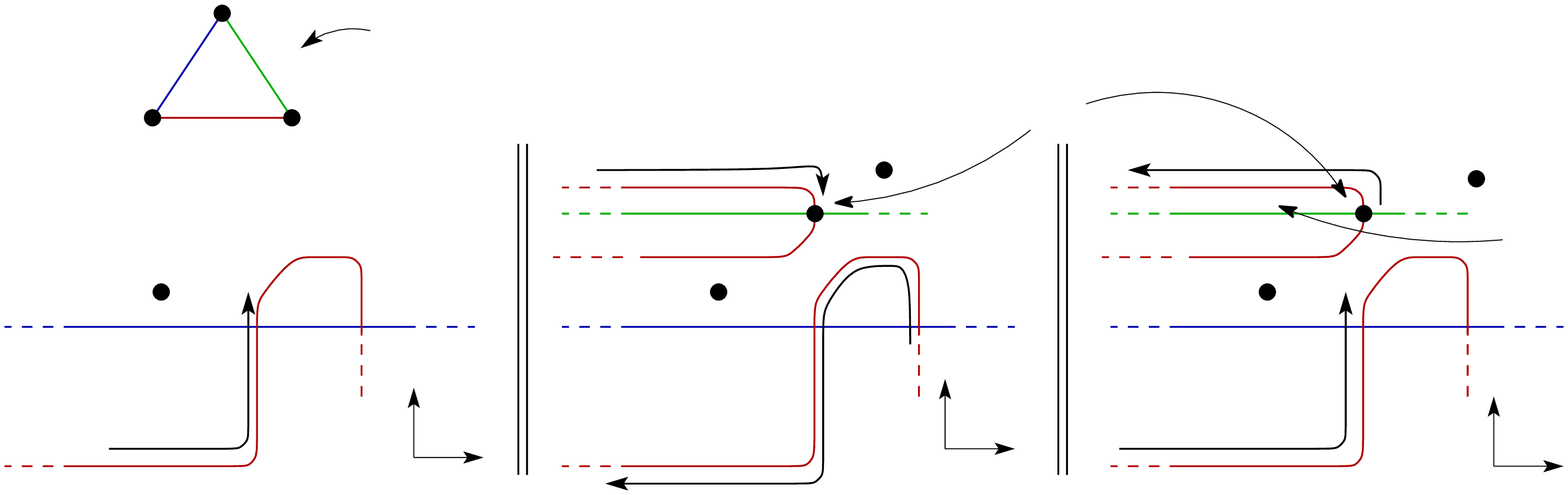}
\caption{Here we see why $n_{x_i}$, $i=1,2,3$ have to be trivial.}  
\label{Fig:proof-a}
\end{figure}
Suppose the domain $\dom(\phi)$ has
non-trivial intersection number $n_{x_1}$ (cf.~left part 
of Figure~\ref{Fig:proof-a}). This means we generate 
a $\bfatprime$-boundary pointing inside $\dom_w$, as indicated by the black arrow
in the left part of Figure~\ref{Fig:proof-a}. Consequently, 
$n_w$ has to be non-zero. Supposing the domain $\dom(\phi)$ would have
non-trivial intersection number $n_{x_2}$ (cf.~middle part 
of Figure~\ref{Fig:proof-a}), we can see from the middle part of
Figure~\ref{Fig:proof-a} (by following the black arrow), that this 
would force $n_z$ to be non-zero, since we generate 
a $\bfatprime$-boundary that has to run to $\hattheta$. 
If the domain $\dom(\phi)$ would have
non-trivial intersection number $n_{x_3}$ (cf.~right part 
of Figure~\ref{Fig:proof-a}), this would generate a 
$\bfatprime$-boundary emanating from $\hattheta$. Since $n_z$ 
vanishes, the boundary has to run once along $\betaprime_1$. But 
then $n_w$ is non-zero, as indicated by the black arrow. Hence, every 
holomorphic triangle that contributes to $h$  has trivial intersection 
number $n_{x_i}$, $i=1,2,3$.
\begin{figure}[t!]
\labellist\small\hair 2pt
\pinlabel {Interesting holomorphic} [l] at 189 260
\pinlabel {triangles} [l] at 189 235
\pinlabel {$\alpha$} [r] at 89 238
\pinlabel {$\beta$} [l] at 129 238
\pinlabel {$\betatilde$} [t] at 109 201
\pinlabel {$\hattheta$} [tl] at 147 203
\pinlabel {$\beta_1$} [B] at 60 144
\pinlabel {$x_1$} at 159 154
\pinlabel {$z$} [l] at 220 159
\pinlabel {$z$} [l] at 517 159
\pinlabel {$\beta_1$} [B] at 350 144
\pinlabel {$w$} [l] at 87 96
\pinlabel {$w$} [l] at 383 96
\pinlabel {$\alpha_1$} [b] at 235 80
\pinlabel {$\alpha_1$} [b] at 533 80
\pinlabel {$\hattheta$} [t] at 122 55
\pinlabel {$\hattheta$} [t] at 420 55
\pinlabel {$\betatilde_1$} [r] at 187 54
\pinlabel {$\betatilde_1$} [r] at 486 54
\pinlabel {$x_2$} [t] at 527 59
\pinlabel {$_1$} [b] at 92 30
\pinlabel {$_1$} [b] at 388 30
\pinlabel {$_2$} [l] at 62 55
\pinlabel {$_2$} [l] at 361 55
\endlabellist
\centering
\includegraphics[width=9cm]{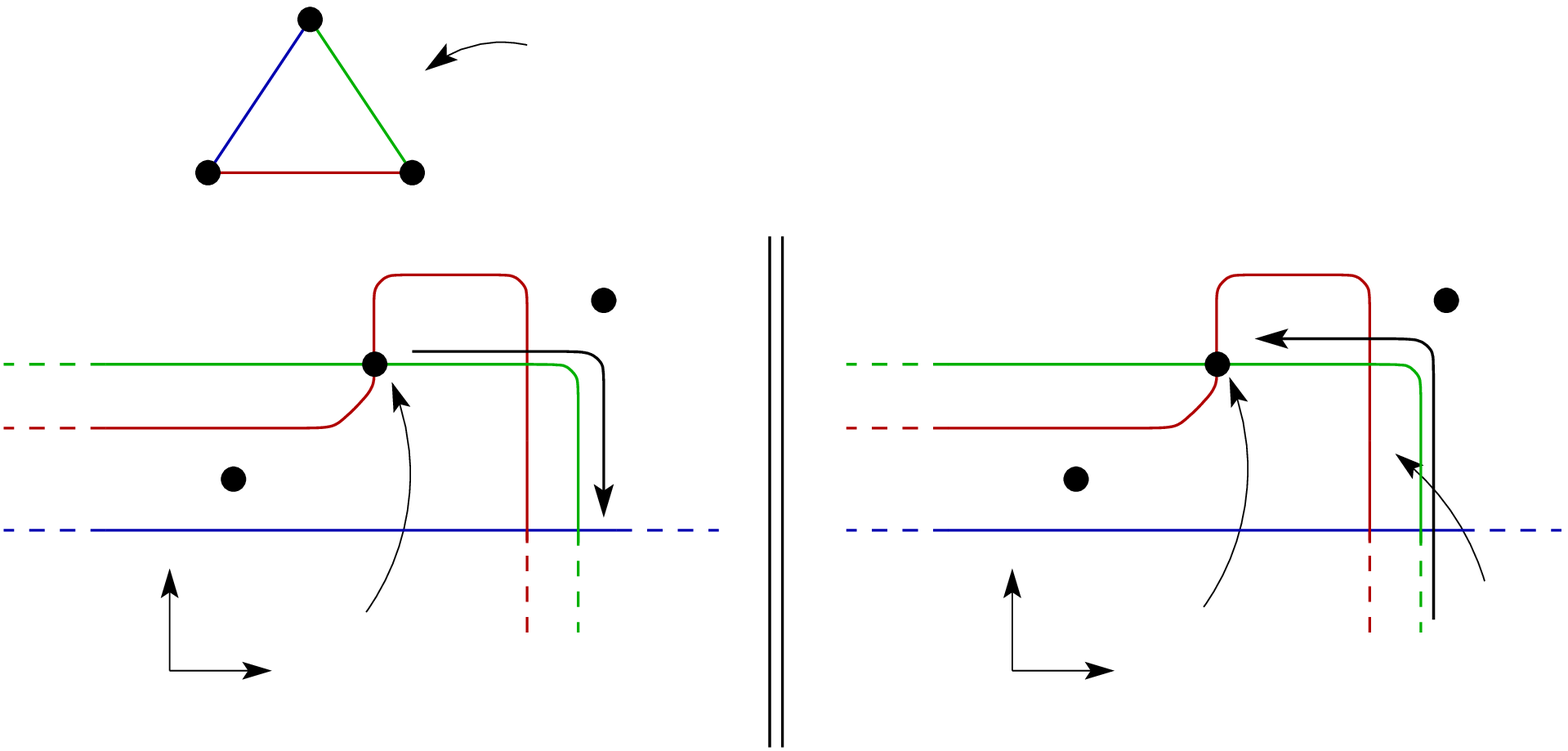}
\caption{Here we see why $n_{x_i}$, $i=1,2$ have to be trivial.}  
\label{Fig:proof-b}
\end{figure}
We continue arguing that holomorphic triangles contributing to $g$, 
cannot use the domains indicated in the right part of 
Figure~\ref{Fig:seq2-03}: Let $\phi$ be a holomorphic triangle 
contributing to $g$. Analogous to the discussion done for $h$, we 
denote the regions not expected to be used by $\phi$ with $\dom_{x_i}$, $i=1,2$.
In each of the domains we fix a point $x_i$. We want to show 
that a non-zero $n_{x_i}$ for $i\in\{1,2\}$ implies that 
$n_w\not=0$ or $n_z\not=0$. The 
different parts of Figure~\ref{Fig:proof-b} show parts of the Heegaard 
diagram pictured at the right of Figure~\ref{Fig:seq2-03}. 
Suppose the domain $\dom(\phi)$ has non-trivial intersection 
number $n_{x_1}$ (cf.~left part of Figure~\ref{Fig:proof-b}). Since $n_w=0$, we 
generate a $\bfat$-boundary pointing inside $\dom_z$, as it is indicated 
in the left part of Figure~\ref{Fig:proof-b} (the boundary follows the 
black arrow). We see that $n_z\not=0$. 
If the domain $\dom(\phi)$ would have non-trivial intersection 
number $n_{x_2}$ (cf.~right part of Figure~\ref{Fig:proof-b}), then, since
$n_z=0$, we would generate a $\bfat$-boundary pointing 
inside $\dom_w$ (cf.~right part of Figure~\ref{Fig:proof-b}). But would
imply that $n_w$ is non-zero.\vspace{0.3cm}\\
Thus, using arguments that are similar to those applied in the proof of
Proposition \ref{THMTHM}, we can identify the moduli spaces of holomorphic 
triangles that contribute to $h$ and $g$.
\end{proof}
\begin{lem}\label{lemSES3} The diagram
\begin{diagram}[size=2em,labelstyle=\scriptstyle]
   &
   &
   &
   &
  \cfkhat(\afat,\dfat,z,w)\\
   &
   &
   &
   \ruOnto(4,2)^{\pi}&
   \dTo^{\Fhat_{\afat,\dfat\widetilde{\dfat}}} \\
  \cfhat(\afat,\bfatprime,z) &
   &
  \rTo^{\Fhat_{\afat,\bfatprime\widetilde{\dfat}}} &
   &
  \cfkhat(\afat,\widetilde{\dfat},z,w)\\
\end{diagram}
commutes where $\pi$ is the projection induced by a natural
identification of generators.
\end{lem}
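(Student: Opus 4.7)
The plan is to mimic closely the strategy of Lemma~\ref{lemSES2}, but dualized to a projection instead of an inclusion. Writing $h'=\Fhat_{\afat,\bfatprime\widetilde{\dfat}}$ and $g'=\Fhat_{\afat,\dfat\widetilde{\dfat}}$, we must verify $h'=g'\circ\pi$. Under the splitting coming from the short exact sequence~(\ref{seqseq02}), the generators of $\cfhat(\afat,\bfatprime,z)$ decompose into $\ab$-intersections (the image of $\iota$) and $\ad$-intersections (those mapped isomorphically by $\pi$ to generators of $\cfkhat(\afat,\dfat,z,w)$). On an $\ab$-generator $\xfat$ the composition $g'\circ\pi$ is zero because $\pi(\xfat)=0$, while $h'(\xfat)$ vanishes by Lemma~\ref{lemSES}, which asserts that $\Fhat_{\afat,\bfatprime\widetilde{\dfat}}$ respects the splitting. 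So the only content is to compare the two maps on an $\ad$-generator.

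For $\xfat\in\talpha\cap\tdelta$, $\pi(\xfat)$ is the corresponding generator of $\cfkhat(\afat,\dfat,z,w)$ under the natural identification of intersection points. The plan is to set up a bijection between the Maslov index-zero moduli spaces of holomorphic triangles counted by $h'$ and those counted by $g'$ by a domain-analysis that parallels Figure~\ref{Fig:seq2-03} and Figures~\ref{Fig:proof-a}--\ref{Fig:proof-b}. First, I would isolate the local picture near $\alpha_1$ showing the three curves $\betaprime_1$, $\deltatilde$ and $\delta$ (the only curve that differs between the two triples); here, $\deltatilde$ is a small Hamiltonian push-off of $\delta$ intersecting $\betaprime_1$ in a cancelling pair of points that realizes the canonical top generator $\hattheta_{\bfatprime\dfattilde}$. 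The bijection arises by truncating/extending each triangle across the thin bigons bounded by these push-off pairs, provided we know that no relevant triangle uses the forbidden regions.

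Thus the decisive step is the analogue of Lemma~\ref{lemSES2}'s combinatorial input: verify that any positive domain of a Whitney triangle counted by $h'$ (emanating from $\xfat\in\talpha\cap\tdelta$, ending at a point of $\talpha\cap\tdeltatilde$, and using the top generator $\hattheta_{\bfatprime\dfattilde}$) that is constrained by $n_w=n_z=0$ cannot use the regions which distinguish the $\betaprime_1$-picture from the $\delta$-picture; and conversely, any triangle counted by $g'$ cannot use the regions distinguishing the two triples either. As in the proof of Lemma~\ref{lemSES2}, I would place auxiliary basepoints $x_i$ in each candidate forbidden region and argue that a non-trivial multiplicity at $x_i$ forces the $\bfatprime$- or $\dfat$-boundary of the triangle to propagate along a prescribed arc into either $\dom_w$ or $\dom_z$, contradicting $n_w=n_z=0$. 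Once the domains are so constrained, holomorphic triangles in $(\afat,\bfatprime,\dfattilde)$ have their $\bfatprime$-boundary confined to the thin cancelling region between $\betaprime_1$ and $\deltatilde$ outside the neighborhood of $\alpha_1$, and the standard small-perturbation gluing argument of Proposition~\ref{THMTHM} identifies their moduli spaces with those of $(\afat,\dfat,\dfattilde)$.

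The main obstacle is carrying out the case analysis of regions for the new configuration involving $\dfattilde$ and $\delta$, where the combinatorics is slightly less symmetric than in Lemma~\ref{lemSES2} since $\delta$ and $\deltatilde$ play the role previously played by $\bfat$ and $\bfattilde$ but in a setting where the additional $\bfatprime$-curve acts as an intermediary. In particular, care is needed at the intersections of $\betaprime_1$ with $\delta$ and $\deltatilde$ inside the annular region between the push-offs; I expect that exactly the same type of arrow-chasing arguments as in Figure~\ref{Fig:proof-a} and Figure~\ref{Fig:proof-b} suffice, but the verification of each case must be done separately. Once completed, this yields the required equality of chain maps and hence commutativity of the diagram.
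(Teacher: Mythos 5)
Your proposal matches the paper's proof, which explicitly sets out to be ``analogous to the proof of Lemma~\ref{lemSES2}'': it defines $\pi$ by identifying $\talpha\cap\tdelta$ with $\talpha\cap\mathbb{T}_{\widetilde{\dfat}}$, performs the same forbidden-region analysis with auxiliary basepoints $x_i$ whose nonzero multiplicity would force $n_w\neq 0$ or $n_z\neq 0$, and then identifies the moduli spaces of triangles as in Proposition~\ref{THMTHM}. Your preliminary reduction to $\ad$-generators via Lemma~\ref{lemSES} and the definition of $\pi$ is a harmless clarification of a point the paper leaves implicit.
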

\begin{proof} The proof is analogous to the proof
of Lemma \ref{lemSES2}. Analogous to $\iota$ we define the 
projection $\pi$ by identifying
\begin{eqnarray*}
  \talpha\cap\tbetaprime
  &=&
  \talpha\cap\tbeta\sqcup\talpha\cap\tdelta\\
  &=&
  \talpha\cap\tbeta\sqcup\talpha\cap\mathbb{T}_{\widetilde{\dfat}},
\end{eqnarray*}
i.e.~by identifying $\talpha\cap\tdelta$ with 
$\talpha\cap\mathbb{T}_{\widetilde{\dfat}}$. This induces a 
projection $\pi$ between the respective chain modules.
\begin{figure}[t!]
\labellist\small\hair 2pt
\pinlabel {regions not used by holomorphic triangles} [l] at 183 312
\pinlabel {$\deltatilde$} [b] at 123 265
\pinlabel {$z$} [r] at 272 243
\pinlabel {$z$} [r] at 713 243
\pinlabel {$\dom_z$} [l] at 321 235
\pinlabel {$\dom_z$} [l] at 761 235
\pinlabel {$\delta$} [t] at 497 232
\pinlabel {$\hattheta$} [l] at 638 190
\pinlabel {$w$} [r] at 79 180
\pinlabel {$w$} [r] at 519 180
\pinlabel {$\alpha_1$} [B] at 347 155
\pinlabel {$\alpha_1$} [B] at 787 155
\pinlabel {$\betaprime_1$} [t] at 207 123
\pinlabel {$\deltatilde$} [l] at 631 93
\pinlabel {$\hattheta$} [l] at 169 59
\pinlabel {regions not used} [l] at 275 56
\pinlabel {by holomorphic} [l] at 275 31
\pinlabel {triangles} [l] at 275 6
\pinlabel {$_1$} [b] at 378 92
\pinlabel {$_1$} [b] at 820 92
\pinlabel {$_2$} [l] at 350 119
\pinlabel {$_2$} [l] at 792 119
\endlabellist
\centering
\includegraphics[width=13.5cm]{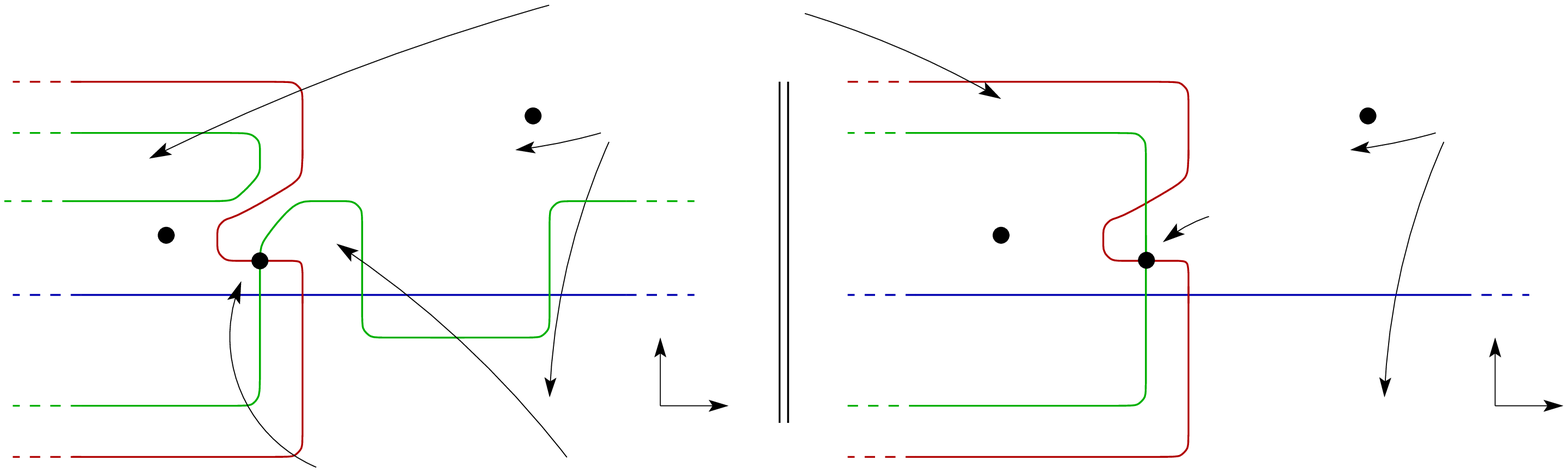}
\caption{Comparing the boundary conditions of
 $\Fhat_{\afat,\bfatprime\widetilde{\dfat}}$ and 
  $\Fhat_{\afat,\dfat\widetilde{\dfat}}$.}  
\label{Fig:seq2-03d}
\end{figure}
In the following we will denote by $h$ the map
$\Fhat_{\afat,\bfatprime\widetilde{\dfat}}$ and by $g$
the map $\Fhat_{\afat\dfat\widetilde{\dfat}}$. This time, we would 
like to show that $h=g\circ\pi$. Figure~\ref{Fig:seq2-03d} 
indicates which domains are not used by holomorphic triangles (in the sense of
Definition \ref{usedomain}) that contribute to $g$ and $h$. This has to be
shown in the following discussion: Observe, that each part of 
Figure~\ref{Fig:proof2-ab1} shows a part of the Heegaard diagrams pictured
in Figure~\ref{Fig:seq2-03d}. Each of these portions will be relevant in one
of the cases we will have to investigate. There are two domains not
to be used by holomorphic triangles contributing to $g$ (cf.~left part 
of Figure~\ref{Fig:seq2-03d}). In each of these domains we fix a point $x_i$ 
and denote the associated domain by $\dom_{x_i}$, $i=1,2$ (cf.~left and middle part 
of Figure~\ref{Fig:proof2-ab1}). There is one domain not to be used by
triangles contributing to $h$ (cf.~right part of Figure~\ref{Fig:seq2-03d}). We fix 
a point $x_3$ in this domain and denote the associated domain 
by $\dom_{x_3}$ (cf.~right of Figure~\ref{Fig:proof2-ab1}).
Let $\phi$ be a holomorphic triangle that contributes to $g$.
Suppose the domain $\dom(\phi)$ has non-trivial 
intersection number $n_{x_1}$ (cf.~left part of Figure~\ref{Fig:proof2-ab1}). 
This generates a $\bfatprime$-boundary like indicated by the black arrow in the 
left portion of Figure~\ref{Fig:proof2-ab1}. This boundary cannot be 
killed, i.e.~cannot be interpreted as sitting in the interior 
of $\dom(\phi)$, since $n_w=0$.  This $\betaprime$-boundary, thus, has to 
emanate from $\hattheta$, forcing it to follow the black arrow like 
indicated. Thus, $n_z$ is non-zero. Supposing the domain $\dom(\phi)$ would have
non-trivial intersection number $n_{x_2}$ (cf.~middle part 
of Figure~\ref{Fig:proof2-ab1}), this would create a 
$\bfatprime$-boundary like indicated by the black arrow in 
the middle portion of Figure~\ref{Fig:proof2-ab1}. This boundary 
points towards $\hattheta$. But recall that the $\bfatprime$-boundary 
of $\phi$ has to emanate from $\hattheta$, as can be seen by looking at the
triangle pictured at the top of the left and middle part of 
Figure~\ref{Fig:proof2-ab1}. Thus, we would have 
to generate a $\bfatprime$-boundary going 
along $\bfatprime$ once, completely. But this would imply $n_w$ to be 
non-zero. Now, let $\phi$ be a holomorphic triangle that contributes 
to $g$. Assuming the domain $\dom(\phi)$ would have
non-trivial intersection number $n_{x_3}$ (cf.~right part of 
Figure~\ref{Fig:proof2-ab1}), we would 
generate $\widetilde{\dfat}$-boundary 
like indicated by the black arrow in the right portion of 
Figure~\ref{Fig:proof2-ab1}. This boundary cannot be 
killed, since $n_z=0$. This boundary has to emanate 
from $\hattheta$ as can be seen by looking at the triangle pictured at 
the top of the right part of Figure~\ref{Fig:proof2-ab1}. But this is impossible, 
since $n_w=0$. We have seen that holomorphic triangles, that contribute to $h$ 
or $g$, do not use the domains indicated in Figure~\ref{Fig:seq2-03d}. Again, 
using arguments that are similar to those applied in the proof of 
Proposition~\ref{THMTHM}, we are able to identify the moduli spaces of 
holomorphic triangles that contribute to $h$ and $g$. This shows that $h=g\circ\pi$.
\end{proof}
\begin{figure}[t!]
\labellist\small\hair 2pt
\pinlabel {Interesting holomorphic} [l] at 421 280
\pinlabel {triangles} [l] at 421 255
\pinlabel {$\alpha$} [r] at 317 247
\pinlabel {$\beta$} [l] at 358 247
\pinlabel {$\alpha$} [r] at 706 247
\pinlabel {$\delta$} [l] at 748 247
\pinlabel {$\betaprime$} [t] at 338 218
\pinlabel {$\hattheta$} [l] at 381 211
\pinlabel {$\deltatilde$} [t] at 726 218
\pinlabel {$\hattheta$} [l] at 769 211
\pinlabel {$z$} [r] at 223 163
\pinlabel {$z$} [r] at 574 163
\pinlabel {$x_3$} [l] at 844 177
\pinlabel {$z$} [r] at 848 154
\pinlabel {$\deltatilde$} [l] at 112 159
\pinlabel {$\deltatilde$} [l] at 463 159
\pinlabel {$\delta$} [r] at 785 159
\pinlabel {$w$} [l] at 40 101
\pinlabel {$\alpha_1$} [b] at 288 71 
\pinlabel {$w$} [l] at 390 101
\pinlabel {$x_2$} at 469 101
\pinlabel {$\alpha_1$} [b] at 638 71
\pinlabel {$w$} [l] at 720 101
\pinlabel {$\hattheta$} [l] at 824 108 
\pinlabel {$\alpha_1$} [b] at 850 71
\pinlabel {$\hattheta$} [r] at 86 24
\pinlabel {$\betaprime_1$} [t] at 161 45
\pinlabel {$\hattheta$} [r] at 437 24
\pinlabel {$\betaprime_1$} [t] at 511 45
\pinlabel {$\deltatilde$} [l] at 808 36
\pinlabel {$_1$} [b] at 296 11
\pinlabel {$_2$} [l] at 269 37
\pinlabel {$_1$} [b] at 656 11
\pinlabel {$_2$} [l] at 629 37
\pinlabel {$_1$} [b] at 760 17
\pinlabel {$_2$} [l] at 732 41
\pinlabel {$x_1$} at 48 138
\endlabellist
\centering
\includegraphics[width=13.5cm]{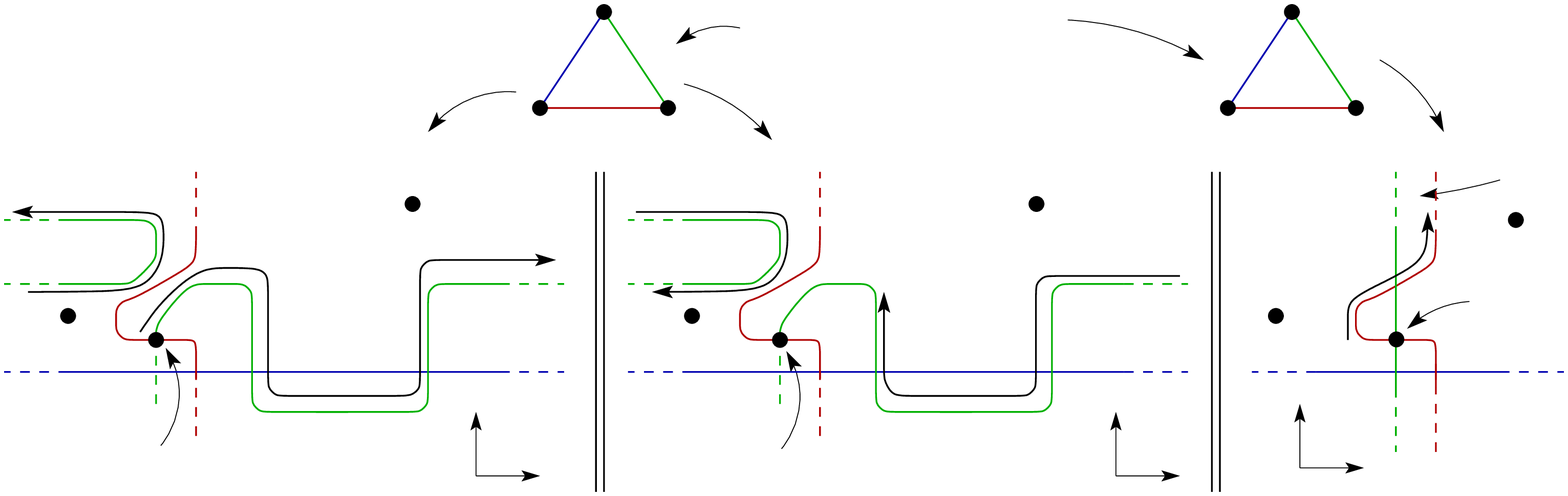}
\caption{Here, we see why $n_{x_i}$, $i=1,2$ have to be trivial for $f$
and why $n_{x_3}$ has to be trivial for $g$.}  
\label{Fig:proof2-ab1}
\end{figure}
\begin{proof}[Proof of Proposition~\ref{firsttriangle}] 
From Lemma \ref{lemSES2} and Lemma \ref{lemSES3} we conclude that 
\begin{equation}
\begin{diagram}[size=2em,labelstyle=\scriptstyle]
0&
\rTo&
\cfkhat(\afat,\bfat,z,w)& 
&
\rTo^{\Fhat_{\afat,\bfat\bfatprime}}&
\cfhat(\afat,\bfatprime,z)&
\rTo^{\Fhat_{\afat,\bfatprime\dfattilde}}& 
&
\cfkhat(\afat,\dfattilde,z,w)&
\rTo& 
0 \\
\end{diagram}
\end{equation}
is a short exact sequence of chain complexes, since (\ref{seqseq02}) is a
short exact sequence of chain complexes. Thus, by standard algebraic
topology we obtain the following long exact sequence between the 
respective homologies.
\begin{equation}
\begin{diagram}[size=2em,labelstyle=\scriptstyle]
\dots&
\rTo^{\partial_*}&
\hfkhat(Y,K)& 
&
\rTo^{\Fhat_{1}}&
\hfhat(Y_{-1}(K))&
\rTo^{\Fhat_{2}}& 
&
\hfkhat(Y_0(K),\mu)&
\rTo^{\partial_*}& 
\dots \\
\end{diagram}
\end{equation}
Observe, that $\partial_*$ is the connecting homomorphism of the short
exact sequence.
\end{proof}
We are now ready to prove the main result by combining both 
Lemma~\ref{lemSES2} and Lemma~\ref{lemSES3}. This result provides
an answer to Question~\ref{q:q}.
\begin{proof}[Proof of Theorem~\ref{ThdiagCOM}] We put together 
Lemma \ref{lemSES2} and 
Lemma \ref{lemSES3} to get two short exact sequences of
chain complexes that are related like claimed, i.e.~we have
\[
\begin{diagram}[size=2em,labelstyle=\scriptstyle]
& & & & & & & & 
\cfkhat(\afat,\dfat,z,w) & \rTo
& 0\\ 
& & & & & & 
\ruTo^{\pi} & 
& 
\dTo^{\Fhat_{\afat,\dfat\widetilde{\dfat}}}
& & \\ 
0&
\rTo&
\cfkhat(\afat,\bfat,z,w)& 
&
\rTo^{\Fhat_{\afat,\bfat\bfatprime}}&
\cfhat(\afat,\bfatprime,z)&
\rTo^{\Fhat_{\afat,\bfatprime\dfattilde}}& 
&
\cfkhat(\afat,\dfattilde,z,w)&
\rTo& 
0 \\
& & 
\dTo^{\Fhat_{\afat,\bfat\bfattilde}}& 
& 
\ruTo^{\iota} &       
& & & & & \\       
0&                 
\rTo& 
\cfkhat(\afat,\bfattilde,z,w) &                 
& & & & & & & 
\end{diagram}.
\]
To identify the diagonal sequence, i.e.~the sequence
\[
\begin{diagram}[size=2em,labelstyle=\scriptstyle]
0&
\rTo&
\cfkhat(\afat,\bfattilde,z,w)& 
&
\rTo^{\iota}&
\cfhat(\afat,\bfatprime,z)&
\rTo^{\pi}& 
&
\cfkhat(\afat,\dfat,z,w)&
\rTo& 
0
\end{diagram}
\]
with the Dehn twist Sequence~$(\ref{dtses01})$, we have 
to isotope $\betatilde_1$ a bit. Observe, that $\betatilde_1$ does 
not match with the situation presented in \S\ref{DTSEQ}. The isotopy, 
however, is supported within  $\dom_z\cup\dom_w$. Furthermore, recall that 
an isotopy not generating/cancelling intersection points, acts on the 
Heegaard Floer homology as a perturbation $\com_{s,t}$ of 
the path of almost complex 
structures $\com_{s,0}$ (see~\cite[\S 6 and \S7.3]{OsZa01} 
or cf.~\cite[\S4.2 and \S4.3]{Saha02}) used in the 
definition of the Heegaard Floer homologies. We have to see that 
the induced map $\widehat{\Phi}_{\com_{s,t}}$ (cf.~\cite[\S6]{OsZa01})
is the identity on the chain level: In the 
definition $\widehat{\Phi}_{\com_{s,t}}$ we 
count $0$-dimensional components of holomorphic discs with $n_w=n_z=0$. The
family $\com_{s,t}$ coincides with $\com_{s,0}$ outside of a set, which is
contained in $(\dom_z\cup\dom_w)\times\symgmo$, since the isotopy 
perturbing $\betatilde_1$ is supported in $\dom_z\cup\dom_w$. Thus, 
for $\xfat$, $\yfat\in\talpha\cap\tbeta$, we have an identification
\begin{equation}
  \Bigl(\M_{\com_{s,t}}(\xfat,\yfat)\Bigr)_{n_z=n_w=0}^{\mu=0}
  =
  \Bigl(\M_{\com_{s,0}}(\xfat,\yfat)\Bigr)_{n_z=n_w=0}^{\mu=0},
  \label{hohoho}
\end{equation}
where the notation should indicate that we are interested in moduli spaces 
with Maslov index $0$ and whose elements satisfy $n_z=n_w=0$. The moduli 
space on the right of Equation (\ref{hohoho}), in the following denoted 
by $\M$, is empty unless $\xfat=\yfat$: Suppose there is a holomorphic 
Whitney disc $\phi$ connecting $\xfat$ with $\yfat$. Assuming 
that $\xfat$ and $\yfat$ are not equal, the disc $\phi$ is non-constant. So, 
because of the translation action (cf.~\S\ref{prelim:01:1}) the disc $\phi$ 
comes in a $1$-dimensional family. Thus, $\phi$ cannot be an element 
of $\M$. If $\xfat$ and $\yfat$ are the same point, the moduli 
space $\M$ contains the constant holomorphic disc. But 
it does not contain non-constant holomorphic discs by the same reasoning 
done for $\xfat\not=\yfat$.\vspace{0.3cm}\\
Consequently, the map $\widehat{\Phi}_{\com_{s,t}}$ is the identity
on the chain level. We know from \cite[\S6]{OsZa01} that
the map $\widehat{\Phi}_{\com_{s,t}}$ is a chain map, i.e.~we have
\[
  0=\parhat_{\com_{s,1}}^w\circ \widehat{\Phi}_{\com_{s,t}}
  -
  \widehat{\Phi}_{\com_{s,t}}\circ\parhat_{\com_{s,0}}^w
  =
  \parhat_{\com_{s,1}}^w-\parhat_{\com_{s,0}}^w.
\]
Thus, the signed count of holomorphic discs with Maslov index $1$ in both 
\[
  \cfkhat(\afat,\bfat,z,w)
  \;\;\mbox{\rm and }
  \;\; 
  \cfkhat(\afat,\bfattilde,z,w)
\]
equals for each homotopy class admitting holomorphic representatives. Thus, we 
may replace the map $\iota$ with $\Gamma_1$. The map $\pi$ already equals $\Gamma_2$.
\end{proof}
As a consequence of these results, it is possible to refine the maps
$\Gamma_1$ and $\Gamma_2$. We would like to indicate how this is done:
Note, that the triple diagram $(\Sigma,\afat,\bfat,\bfatprime)$ comes
from a $2$-handle attachment (or, more precisely, from a $(-1)$-surgery 
along $K$). Denote by $W$ the associated cobordism. The 
map $\Fhat_{\afat,\bfat\bfat'}$ 
refines with respect to $\spinc$-structures of the 
cobordism $W$ (see~\S\ref{cobmapintro} and cf.~\cite[\S4.1]{OsZa03}). To be
precise, for $\fraks\in\spinc(W)$ denote
by $\fraks_1$ and $\fraks_2$ its restriction onto $Y$ and $Y_{-1}(K)$. Then
the map $\Fhat_1$ refines to a map
\[
 \Fhat_{1;\fraks}
 \co
 \hfkhat(Y,K;\fraks_1)
 \lra
 \hfkhat(Y_{-1}(K);\fraks_2).
\]
The map $\Fhat_4$ is given as the map induced in homology by $\Fhat_{\afat,\bfat\bfattilde}$. The underlying triple diagram $(\Sigma,\afat,\bfat,\bfattilde)$ represents the trivial cobordism, since
we obtain $\bfattilde$ from $\bfat$ by 
isotopies (see~Figure~\ref{Fig:atcirc}) which do not create/change any
intersection points with the $\afat$-circles. The $\spinc$-structures of
the trivial cobordism $[0,1]\times Y$ coincide with the 
$\spinc$-structures of $Y$. Thus, $\Fhat_4$ 
refines to
\[
 \Fhat_{4;\fraks_1}
 \co
 \hfkhat(Y,K;\fraks_1)
 \lra
 \hfkhat(Y,K;\fraks_1)
\]
for $\fraks_1\in\spinc([0,1]\times Y)=\spinc(Y)$. According to 
Theorem~\ref{ThdiagCOM}, if we define $\Gamma_{1;\fraks}$ to be the
restriction of $\Gamma_1$ onto $\hfkhat(Y,K;\fraks_1)$, then we get a map
\[
 \Gamma_{1;\fraks}
 \co
 \hfkhat(Y,K;\fraks_1)
 \lra
 \hfhat(Y_{-1}(K);\fraks_2).
\]
In a similar fashion, it is possible to define refinements $\Gamma_{2;\fraks}$ for
the map $\Gamma_2$ and refinements $f_{\fraks;*}$ for $f_*$. Given 
a $\spinc$-structure $\frakt$ of $Y\backslash\nu K$, denote by $\frakt(Y)$, 
$\frakt(Y_{-1}(K))$ and $\frakt(Y_0(K))$ the set of extensions of $\frakt$ to
$Y$, $Y_{-1}(K)$ and $Y_{0}(K)$, respectively. There is a refined version of 
the Dehn twist sequence we may derive, namely
\begin{equation}
\begin{diagram}[size=2em,labelstyle=\scriptstyle]
 \hfkhat(Y,K;\frakt(Y))&&\rTo^{\Gamma}&&\hfhat(Y_{-1}(K);\frakt(Y_{-1}(K)))\\
                       &\luTo^{\widetilde{f}}&               
                       &\ldTo_{\widetilde{\Gamma}}&\\
                       &     &\hfkhat(Y_0(K),\mu;\frakt(Y_0(K))) & &
\end{diagram}\label{eq:refine}
\end{equation}
where $\Gamma$ is the sum of $\Gamma_{1,\fraks}$ for all $\spinc$-structures
$\fraks\in\spinc(W)$ extending elements $\frakt(Y)$ and $\frakt(Y_{-1}(K))$
and, analogously, the maps $\widetilde{\Gamma}$ and $\widetilde{f}$ are defined. The
exactness of $\eqref{eq:refine}$ follows from Theorem~\ref{ThdiagCOM} and the
fact, that we may refine the horizontal sequence 
in \eqref{diagCOM} in exactly the same 
way (cf.~\cite[Theorem~14.3.2]{OzSti}).\vspace{0.3cm}\\
Similarly, applying \cite[Lemma 8.7]{OsZa01} we see that for an
orientation system $\mathfrak{o}$ on $\hfkhat(Y,K)$ there is an orientation
system $\mathfrak{o}'$ for $\hfhat(Y_{-1}(K))$ and an orientation system for $\Fhat_1$ which are {\it compatible} (in the sense of \cite[Lemma 8.7]{OsZa01}), i.e.~the 
map $\Fhat_1$ can be defined with $\Z$-coefficients.
The same we do with $\Fhat_4$, however, as above, using that the associated
cobordism is $[0,1]\times Y$. We obtain that $\Fhat_4$ can be defined with
$\Z$-coefficients for the orientation system $\mathfrak{o}$ on the source
and target $\hfkhat(Y,K)$. Thus, using the commutativity of the diagram in
Theorem~\ref{ThdiagCOM}, or using the chain level version derived in the proof
of Theorem~\ref{ThdiagCOM}, we may give a version of $\Gamma_1$ 
with $\Z$-coefficients.
\section{Proofs of Theorem~\ref{kerimequal} and 
Proposition~\ref{combinmap}}\label{gendef}
Suppose we are given a closed, oriented $3$-manifold $Y$ and a
framed knot $L\subset Y$ with framing we denote by $n$. Denote 
by $K$ a push-off of $L$ corresponding to the $(n+1)$-framing 
of $L$. The goal of the following discussion will be to
associate to a $n$-surgery along $L$ a connecting morphism $f_*$
 (see~Definition~\ref{mapdef} and cf.~Proposition~\ref{THMTHM}) from 
a suitable Dehn twist sequence. Denote by $(P,\phi)$ an open book
decomposition of $Y'=Y_n(L)$ which is adapted to $K'\sqcup L'$ (in the 
sense of \S\ref{sec:setadts}). Here, $L'$ denotes
the knot $K$ represents in the surgered manifold $Y_n(L)$ with framing given by
$n+2$ and $K'$ is a push-off of $L'$ representing its $n+1$-framing. 
Denote by $(\Sigma,\afat,\bfat,w,z)$ the doubly-pointed Heegaard
diagram induced by $(P,\phi)$ like constructed in \S\ref{sec:setadts}. 
Using the notation from
that section, we would like to show that the diagram 
$(\Sigma,\afat,\dfat,w,z)$ is a Heegaard diagram of $Y$ which is 
adapted to the knot $K$. If we are able to show this, then the Dehn twist
sequence will read
\begin{equation}
\begin{diagram}[size=2em,labelstyle=\scriptstyle]
 \hfkhat(\Sigma,\alphafat,\deltafat,z,w) 
 & & \rTo^{f_*} & & 
 \hfkhat(\Sigma,\alphafat,\betafat,z,w) \\
 &\luTo_{\Gamma_2}& 
 &\ldTo_{\Gamma_1}& \\
 && 
 \hfhat(\Sigma,\alphafat,\betaprimefat,z)
 &&
\end{diagram}\label{seqseq03}
\end{equation}
and we see, that the connecting morphism $f_*$ in this Dehn twist sequence is
a morphism between the knot Floer homologies of the pair 
$(Y,K)$ and of the pair $(Y_n(L),K')$: To prove the claim, we will give a
surgical description of the manifold represented by $(\Sigma,\afat,\dfat,w,z)$.
Using the notation from \S\ref{sec:setadts} we know that the Heegaard diagram
$(\Sigma,\afat,\dfat,w,z)$ represents the pair $(Y'_{n+2}(L'),\mu)$, where $\mu$
is a meridian of $L'$ in $Y'$, interpreted as sitting in $Y'_{n+2}(L')$. Just
note, that in \S\ref{sec:setadts} we measured the surgery coefficients with
respect to the page framing of $L'$ induced by $(P,\phi)$. Thus, the $0$-framing
from $\S\ref{sec:setadts}$ corresponds to the specified surgery framing of $L'$, 
i.e.~the $(n+2)$-framing. We obtain the surgery description of
$(Y'_{n+2}(L'),\mu)$ given in the left of Figure~\ref{Fig:fdef}. We slide the knot
$\mu$ over $L'$, once, and obtain the middle portion of Figure~\ref{Fig:fdef}. 
Finally, we perform an inverse handle slide of $L'$ over $L$ to obtain the
right portion of Figure~\ref{Fig:fdef}. With a slam dunk, we can slide $L'$
off $L$ which makes both surgery curves disappear in the surgery diagram. 
The resulting manifold is $Y$ (as the surgeries we did now disappeared) and, since
$\mu$ is a parallel of $L$ (see right of Figure~\ref{Fig:fdef}), the 
knot $\mu$ equals the knot $K$.\vspace{0.3cm}\\
Thus, the connecting morphism which can be defined as in Definition~\ref{mapdef}
induces a map
\[
  f_{*}
  \co
  \hfkhat(Y,K)
  \lra
  \hfkhat
  (Y_n(L),K').
\]
Before we continue with the proofs of the main results of this section, we 
would like to make the following observation.
\begin{figure}[t!]
\labellist\small\hair 2pt
\pinlabel {$n+1$} at 108 273
\pinlabel {$n+1$} at 415 273
\pinlabel {$n+1$} at 756 273

\pinlabel {$L'$} [t] at 66 30
\pinlabel {$L'$} [t] at 413 30
\pinlabel {$L$} [t] at 151 30
\pinlabel {$L$} [t] at 458 30
\pinlabel {$L$} [t] at 800 30

\pinlabel {$\mu$} [t] at 372 30
\pinlabel {$\mu$} [t] at 715 30

\pinlabel {slide} [b] at 253 221
\pinlabel {inverse} [b] at 568 221
\pinlabel {handle slide} [t] at 568 200

\pinlabel {$\mu$} [r] at 23 158
\pinlabel {$L'$} [l] at 838 158
\pinlabel {$0$} [l] at 833 198

\pinlabel{$n+2$} [b] at 415 433
\pinlabel{$n+2$} [r] at 60 381
\pinlabel{$n$} [l] at 156 381
\pinlabel{$n$} [l] at 463 381
\pinlabel{$n$} [l] at 806 381

\endlabellist
\centering
\includegraphics[width=12cm]{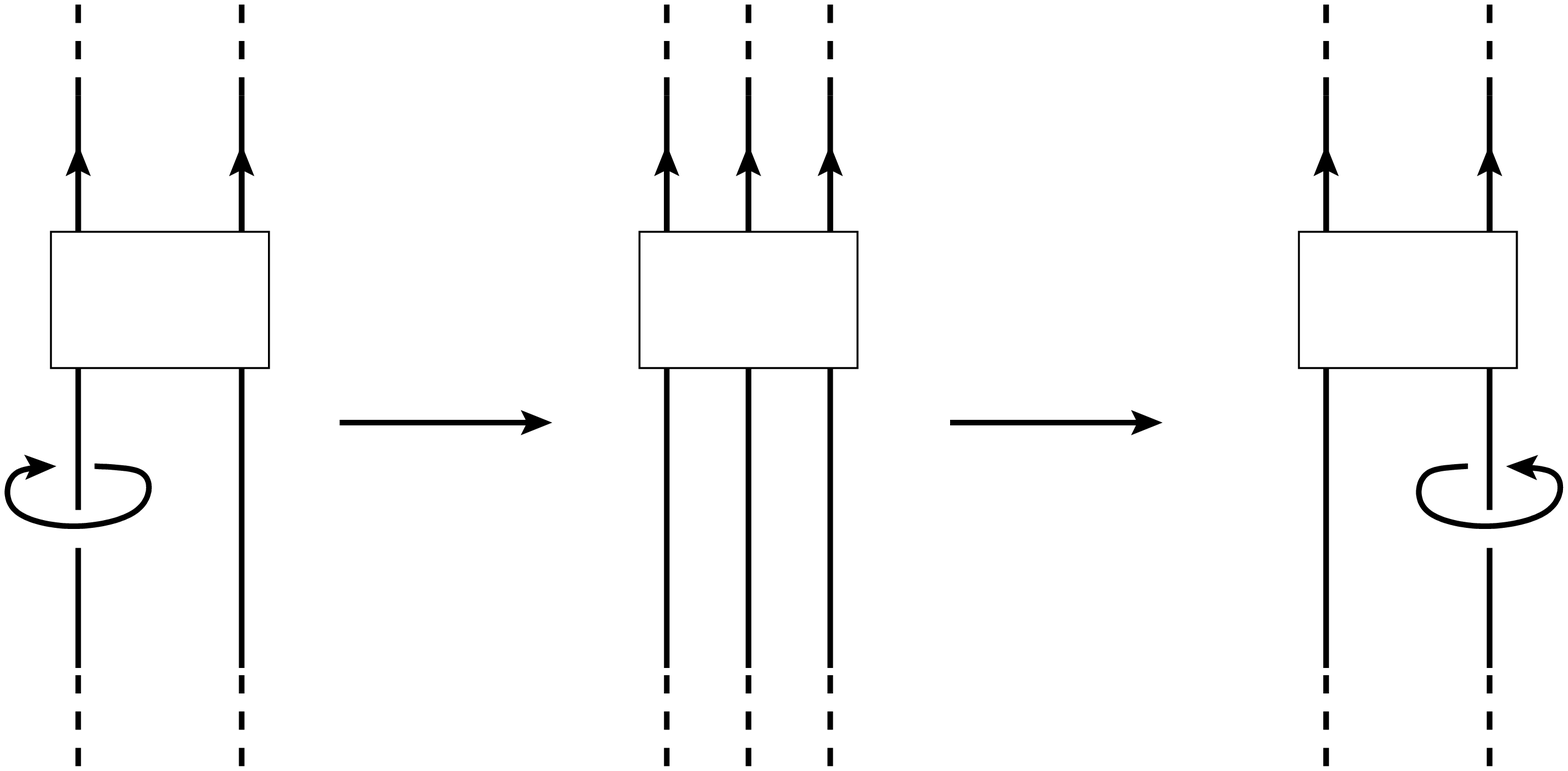}
\caption{A surgery diagram for the pair $(Y_{n+2}(L),\mu)$ and the moves 
necessary to see that $(Y_{n+2}(L),\mu)=(Y,K)$.}
\label{Fig:fdef}
\end{figure}
\begin{lem}\label{observe} In Theorem~1.1 we can drop the condition 
that $K\subset Y\backslash C$ is null-homologous.
\end{lem}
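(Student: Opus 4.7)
The hypothesis that $K$ is null-homologous in $Y\backslash C$ was never actually used in the original proof; the plan is simply to record this. The knot Floer homology $\hfkhat(Y,K)$ defined in Section~\ref{knotfloerhomology} only requires a doubly-pointed Heegaard diagram adapted to $K$ and is therefore well-defined for an arbitrary knot. Similarly, the triangle-counting maps of Section~\ref{cobmapintro} are defined by counting Whitney triangles with $n_z=n_w=0$ on doubly-pointed Heegaard triples and impose no homological condition on $K$; as the author remarks at the end of that section, the proof of the surgery exact triangle from \cite[Theorem~8.2]{OsZa04} (cf.~\cite[Theorem~2.7]{OsSti}) carries over verbatim from the Heegaard Floer case to the knot Floer case.

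Concretely, I would choose a Heegaard diagram $(\Sigma,\afat,\bfat,w,z)$ simultaneously adapted to $K$ and to $C$, where $\beta_1$ is a meridian of $C$ inside a torus summand $T\subset\Sigma$ and $(w,z)$ lie outside $T$ in the portion of $\Sigma$ carrying $K$. The $n$- and $(n+1)$-framings of $C$ produce two further sets of attaching circles $\bfat'$, $\bfat''$ that differ from $\bfat$ only inside $T$, and counting Whitney triangles with $n_z=n_w=0$ on the doubly-pointed Heegaard multi-diagram $(\Sigma,\afat,\bfat,\bfat',\bfat'',w,z)$ defines the maps $\Fhat_1$, $\Fhat_2$, $\Fhat_3$.

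The exactness argument of \cite[Theorem~8.2]{OsZa04} rests on two inputs: the associativity of holomorphic triangle counts, which reduces the vanishing $\Fhat_{i+1}\circ\Fhat_i=0$ to a count of holomorphic rectangles with a distinguished vertex, and a local model computation in a neighbourhood of the surgery region identifying the relevant triangle map with a chain isomorphism up to homotopy. Both steps are local in $T$ and see the base points only through the constraint $n_z=n_w=0$, so neither depends on the homology class of $K$ in $Y\backslash C$. The one mildly technical point is admissibility of the multi-diagram, which can be arranged by winding the $\afat$-curves in the complement of $T$ and of the base points as in \cite[Section~5]{OsZa01}; this winding is supported away from $C$ and from $K$ and so alters neither the local model near the surgery region nor the intersection combinatorics that defines the basepoints. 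With admissibility in hand, the argument goes through verbatim.
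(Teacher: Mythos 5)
Your argument reaches the same conclusion as the paper, but via a genuinely different route. The paper's one-sentence sketch instead cites the mapping-cone formulation of the exact triangle from \cite[Theorem~4.5]{OsZa05} (together with the end of the proof of Proposition~2.1 there) and simply observes that one can read through that proof and find no use of the hypothesis; it does not give the concrete Heegaard multi-diagram setup or the admissibility discussion that you do. Your route, working through the original \cite[Theorem~8.2]{OsZa04}-style argument and emphasizing that (i) $\hfkhat$ as defined in \S\ref{knotfloerhomology} via a doubly-pointed diagram requires no homological assumption, (ii) the triangle maps of \S\ref{cobmapintro} only see the base points through $n_z=n_w=0$, and (iii) the exactness mechanism is local near the surgery torus, is a valid and in fact more self-contained justification. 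Your phrase ``local model computation $\dots$ identifying the relevant triangle map with a chain isomorphism up to homotopy'' is a somewhat loose paraphrase of what is actually a count of a distinguished small triangle plus higher-area nilpotent corrections, but the underlying observation you are extracting --- that this count is supported near the surgery region and is insensitive to the global homology class of $K$ --- is correct and is exactly what is needed. Your explicit handling of admissibility by winding away from $T$, $C$, $K$, and the base points is a welcome addition not present in the paper's sketch; both approaches are sound, and yours supplies more of the underlying reasoning while the paper delegates it to a citation.
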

\begin{proof}[Sketch of Proof] We go through the mapping cone proof of the
surgery exact triangle (see~\cite[Theorem~4.5]{OsZa05}, cf.~also the end of the 
proof of Proposition~2.1~in \cite{OsZa05} and cf.~\cite[\S7]{Saha02}) and 
see that we do not need the condition.
\end{proof}
\begin{proof}[Proof of Theorem~\ref{kerimequal}]
Suppose we are given a manifold $Y'$ with knot $K'$ in it with framing $n$. Denote
by $Y$ the result of an $n$-surgery along $K'$ and denote by $K'$ the knot induced 
from the push-off of $K'$ corresponding to the $(n-1)$-framing of $K'$. From 
the considerations given in this section we know that to this situation we may 
associate a map $f_*$ (defined as in Definition~\ref{mapdef}) which
is part of a Dehn twist sequence, i.e.~we may set up a Dehn twist sequence 
which reads
\begin{diagram}[size=2em,labelstyle=\scriptstyle]
 \hfkhat(Y',K') &     & \rTo^{f_*}     &     & \hfkhat(Y,K) \\
              &\luTo_{\Gamma_2} &          & \ldTo_{\Gamma_1}& \\
              &     &\hfhat(Y'')&     & \\
\end{diagram}
for some manifold $Y''$. In this situation, $Y''$ will correspond to
$Y_{-1}(K)$ and the pair $(Y',K')$ will equal $(Y_0(K),\mu)$.\vspace{0.3cm}\\
On the other hand, we may apply Proposition~\ref{firsttriangle} to the 
pair $(Y,K)$ to get the following exact sequence
\begin{equation}
\begin{diagram}[size=2em,labelstyle=\scriptstyle]
 \hfkhat(Y',K') &     & \rTo^{\partial_*}     &     & \hfkhat(Y,K) \\
              &\luTo_{\Fhat_2} &          & \ldTo_{\Fhat_1}& \\
              &     &\hfhat(Y'')&     & \\
\end{diagram}
\label{seq:upper}
\end{equation}
Furthermore, by Theorem~1.1 (and the observation formulated in 
Lemma~\ref{observe}) we see that the pair $(Y,K)$ induces a surgery
exact triangle which reads
\begin{equation}
\begin{diagram}[size=2em,labelstyle=\scriptstyle]
 \hfkhat(Y',K') &     & \rTo^{\Fhat_3}     &     & \hfkhat(Y,K) \\
              &\luTo_{\Fhat_2} &          & \ldTo_{\Fhat_1}& \\
              &     &\hfhat(Y'')&     & \\
\end{diagram}
\label{seq:lower}
\end{equation}
Since the maps $\Fhat_i$, $i=1,2$, appear in both the Sequences~\eqref{seq:upper}
and \eqref{seq:lower}, by exactness we conclude that
\begin{equation}
\begin{array}{ccc}
 \ker(\Fhat_3)&=&\ker(\partial_*)\\
 \im(\Fhat_3)&=&\im(\partial_*).
\end{array}
\label{kerimeq}
\end{equation}
Now, we continue going back to the situation of Theorem~\ref{ThdiagCOM}, especially
considering the notations used there. We consider
the commutative diagram
\begin{equation}
\begin{diagram}[size=2em,labelstyle=\scriptstyle]
 \hfkhat(Y,K) & &
 \rTo^{f_*} 
 & &\hfkhat(Y_0(K),\mu)\\
 \dTo_{\Fhat_4} && &&\uTo_{\Fhat_5} \\
 \hfkhat(Y,K) & &
 \rTo^{\partial_*}
 & &\hfkhat(Y_0(K),\mu)
\end{diagram}\label{important}
\end{equation}
where $\Fhat_4$ is the map in homology induced 
by $\Fhat_{\afat,\bfat\bfattilde}$ and $\Fhat_5$ is the map in homology 
induced by $\Fhat_{\afat,\dfat\dfattilde}$. This diagram
is the square from Sequence~(\ref{diagCOM}) and which commutes according 
to Theorem~\ref{ThdiagCOM}. As we have observed in (\ref{kerimeq}), the kernel 
and the image of $\Fhat_{3}$ coincides with the kernel and image 
of $\partial_*$. Thus, we have that
\begin{eqnarray*}
 \im(f_*)
 &=&\im(\Fhat_5\circ\partial_*\circ \Fhat_4)
 =\im(\Fhat_5\circ\Fhat_3\circ \Fhat_4)
 \\
 \ker(f_*)
 &=&\ker(\Fhat_5\circ\partial_*\circ \Fhat_4)
 =\ker(\Fhat_5\circ\Fhat_3\circ \Fhat_4)
\end{eqnarray*}
Observe, that the map $\Fhat_3$ is given as the map in homology
induced by the Heegaard triple diagram $(\Sigma,\afat,\dfattilde,\bfat)$.
The composition $\Fhat_5\circ\Fhat_3\circ \Fhat_4$ equals
\begin{eqnarray*}
\Fhat_{\afat,\bfat\bfattilde}
\circ
\Fhat_{\afat,\dfattilde\bfat}
\circ
\Fhat_{\afat,\dfat\dfattilde}
=
\Fhat_{\afat,\bfat\bfattilde}
\circ
\Fhat_{\afat,\dfat\bfat}
=
\Fhat_{\afat,\dfat\bfattilde},
\end{eqnarray*}
where we used the composition law for cobordism maps and the fact 
that $\Fhat_3$ is the map, which is induced 
by $\Fhat_{\afat,\dfattilde\bfat}$ in homology. Hence, the image 
and kernel of the map $f_*$ coincides with the image and
kernel of the map induced by $\Fhat_{\afat,\dfat\bfattilde}$ in homology.
However, the maps $\Fhat_3$ and $\Fhat_{\afat,\dfat\bfattilde}$ are induced
by the same $2$-handle attachment, which can be seen by comparing the
Heegaard triple diagrams $(\Sigma,\afat,\dfattilde,\bfat)$ 
and $(\Sigma,\afat,\dfat,\bfattilde)$.
\end{proof}
\begin{proof}[Proof of Proposition~\ref{combinmap}] 
Let $\Fhat$ be the map given in the statement of the proposition. By the
discussion at the beginning of this section, we know that to the described
surgery we may associate a map $f_*$ (as defined in Definition~\ref{mapdef}) 
which is part of a Dehn twist sequence. Furthermore, by Theorem~\ref{kerimequal}
we know that the rank of the image and the kernel of $\Fhat$ and $f_*$ agree.
Thus, to prove the statement, we have to give a reasoning why the kernel
and image of $f_*$ can be computed, combinatorially:\vspace{0.3cm}\\
Using the notation from the beginning of this section, we have to prove
that it is possible to find an open book decomposition $(P,\phi)$ adapted
to $L'$ and $K'$ such that the induced Heegaard diagram $(\Sigma,\afat,\bfat)$
is nice (see~\cite{sarwang}). In \cite{plamenev}, Plamenevskaya shows that 
the Sarkar-Wang algorithm (see \cite{sarwang}) can 
be modified to apply for open books by just using isotopies 
of the monodromy: To be more precise, it is possible to modify the monodromy 
$\phi$ with a suitable isotopy $\varphi_t$ such that the open book $(P,\phi')$,
where $\phi'=\varphi_1\circ\phi$, induces a nice Heegaard diagram. We apply 
her modified version of the Sarkar-Wang algorithm to obtain such an isotopy 
$\varphi_t$. The knot $K'$ (and respectively $L'$) can be modified with the
isotopy $\varphi_t$, as well. The knot $K'$ is isotopic to a curve $\delta$
on the page $P$ of the open book (since it is an adapted open book 
by definition). The isotopy deforms $\delta$ into $\varphi_1(\delta)$. The 
open book $(P,\phi')$ is an open book decomposition adapted to 
$\varphi_1(\delta)$. To complete
the proof, we have to see that $(\Sigma,\afat,\bfatprime)$ is nice, as well: 
To see this, recall (for instance from \cite[proof of Lemma~4.2]{Saha01}) 
that $\delta$ (on the 
Heegaard surface) is parallel to $\beta_2$ outside of the region pictured in
Figure~\ref{Fig:figtwo} (cf.~Figure~\ref{Fig:atcirc}). Hence, the domains of 
the diagram $(\Sigma,\afat,\bfatprime)$ are, besides $\dom_z$, all 
obtained from the domains of $(\Sigma,\afat,\bfat)$ by splitting 
off a rectangle. The domain of the point $z$ in the new diagram is 
obtained by joining together $\dom_z$ and $\dom_w$ of the old diagram
(see~Figure~\ref{Fig:figtwo}). Thus, niceness is preserved. As 
$f_*$, by definition, is a part of the differential 
$\parhat_{\afat\bfatprime}$ (see~Proposition~\ref{THMTHM}), its image and 
kernel can be now computed combinatorially.
\end{proof}

\addcontentsline{toc}{chapter}{Bibliography}
\bibliographystyle{amsplain}
\providecommand{\bysame}{\leavevmode\hbox to3em{\hrulefill}\thinspace}
\providecommand{\MR}{\relax\ifhmode\unskip\space\fi MR }
\providecommand{\MRhref}[2]{%
  \href{http://www.ams.org/mathscinet-getitem?mr=#1}{#2}
}
\providecommand{\href}[2]{#2}

\end{document}